\newcommand{\C}{\mathbb{C}}
\newcommand{\Q}{\mathbb{Q}}
\newcommand{\Z}{\mathbb{Z}}
\newcommand{\bdry}{\partial}
\newcommand{\isom}{\cong}
\newcommand{\inv}{^{-1}}
\newtheorem{theorem}{Theorem}[section]
\newtheorem{lemma}[theorem]{Lemma}
\newtheorem{prop}[theorem]{Proposition}
\newtheorem{cor}[theorem]{Corollary}
\newtheorem{question}[theorem]{Question}
\title{Symplectic fillings of Seifert fibered spaces}
\author{Laura Starkston}
\address{Department of Mathematics \\
The University of Texas \\
Austin, TX 78712, USA}
\email{lstarkston@math.utexas.edu}
\urladdr{http://www.ma.utexas.edu/$\sim$lstarkston}
\begin{document}

\begin{abstract}
We give finiteness results and some classifications up to diffeomorphism of minimal strong symplectic fillings of Seifert fibered spaces over $S^2$ satisfying certain conditions, with a fixed natural contact structure. In some cases we can prove that all symplectic fillings are obtained by rational blow-downs of a plumbing of spheres. In other cases, we produce new manifolds with convex symplectic boundary, thus yielding new cut-and-paste operations on symplectic manifolds containing certain configurations of symplectic spheres.
\end{abstract} 

\maketitle

\section{Introduction}
{

There are now a number of constructions of symplectic 4-manifolds coming from cutting out a neighborhood of a nice configuration of symplectic curves and replacing that neighborhood with a new piece. In the case that this piece is a rational homology ball, this technique is called a rational blow-down. The fact that some rational blow-downs can be done symplectically was first shown by Symington \cite{Symington1}, \cite{Symington2}. This is interesting both from the perspective of understanding the geography of symplectic 4-manifolds, and because of the information this yields about the Seiberg Witten invariants of the underlying smooth 4-manifold. A number of recent papers have explored when it is possible to replace a neighborhood of more general configurations of symplectic curves with Milnor fibers and symplectic rational homology balls (see \cite{BhupalStipsicz}, \cite{GayStipsicz1}, \cite{GayStipsicz2}, \cite{StipsiczSzaboWahl}).

The goal here is to understand not only when a neighborhood of a configuration of curves can be replaced by a specific symplectic or complex manifold, but what are all \emph{possible} symplectic fillings that can replace such a neighborhood of curves. Following a method of argument used by Lisca \cite{Lisca} and Bhupal and Stipsicz \cite{BhupalStipsicz}, we can produce a finite list of all possible diffeomorphism types that may replace a convex neighborhood of certain configurations of symplectic spheres, while preserving the existence of a symplectic structure. Namely, this is a classification of convex symplectic fillings of the contact boundary of a convex neighborhood of these spheres. 

Similar classifications have been made for lens spaces with their standard tight contact structures (Lisca \cite{Lisca}), other tight contact structures on $L(p,1)$ (Plamenevskaya and Van Horn-Morris \cite{PlamenevskayaVanHornMorris}), and some links of simple singularities (Ohta and Ono \cite{OhtaOno}). By contrast, there are now many results which show that there are contact manifolds with infinitely many non-diffeomorphic Stein or strong symplectic fillings satisfying various properties (see \cite{Smith}, \cite{OzbagciStipsicz}, \cite{OhtaOnoInfinite}, \cite{AEMS}, \cite{Akbulut}, \cite{AkbulutYasui}, \cite{AkhmedovOzbagci}). We obtain some finiteness results in this paper, but it remains an open question how to characterize contact three manifolds which have finitely many versus infinitely many non-diffeomorphic Stein or strong symplectic fillings.

The proof technique limits the configurations of curves for which we are able to solve this problem. The most obvious conditions necessary for these arguments are the following.
\begin{enumerate}
\item The curves are symplectic spheres that intersect each other $\omega$-orthogonally. (Note that it suffices to assume the spheres intersect positively and transversely since then they can be isotoped through symplectic spheres to be $\omega$-orthogonal \cite{GompfSymplGluing}).
\item  The spheres are configured in a star-shaped graph (there is a unique vertex of valence $>2$) with $k$ arms.
\item The decoration representing the self-intersection number on the central vertex is at most $-k-1$, and the decorations on the vertices in the arms are at most $-2$.
\end{enumerate}

Because these conditions ensure that the dual graph (in the sense of Stipsicz, Szab\'{o}, and Wahl \cite{StipsiczSzaboWahl}) is a star-shaped graph whose central vertex has positive decoration (see section \ref{dualgraph}), we will we will say configurations satisfying these conditions are \emph{dually positive}. 

A neighborhoood of such spheres has a handlebody diagram given in figure \ref{PlumbingSFS} where $e_0\leq -k-1$ and $b_i^j\leq -2$.
\begin{figure}
	\centering
	\subfloat[Handlebody diagram]{
	\label{PlumbingSFS}
	\psfragscanon
	\psfrag{e}{$e_0$}
	\psfrag{1}{$b_1^1$}
	\psfrag{2}{$b_{n_1}^1$}
	\psfrag{3}{$b_1^2$}
	\psfrag{4}{$b_{n_2}^2$}
	\psfrag{5}{$b_1^k$}
	\psfrag{6}{$b_{n_k}^k$}
	\includegraphics[scale=.5]{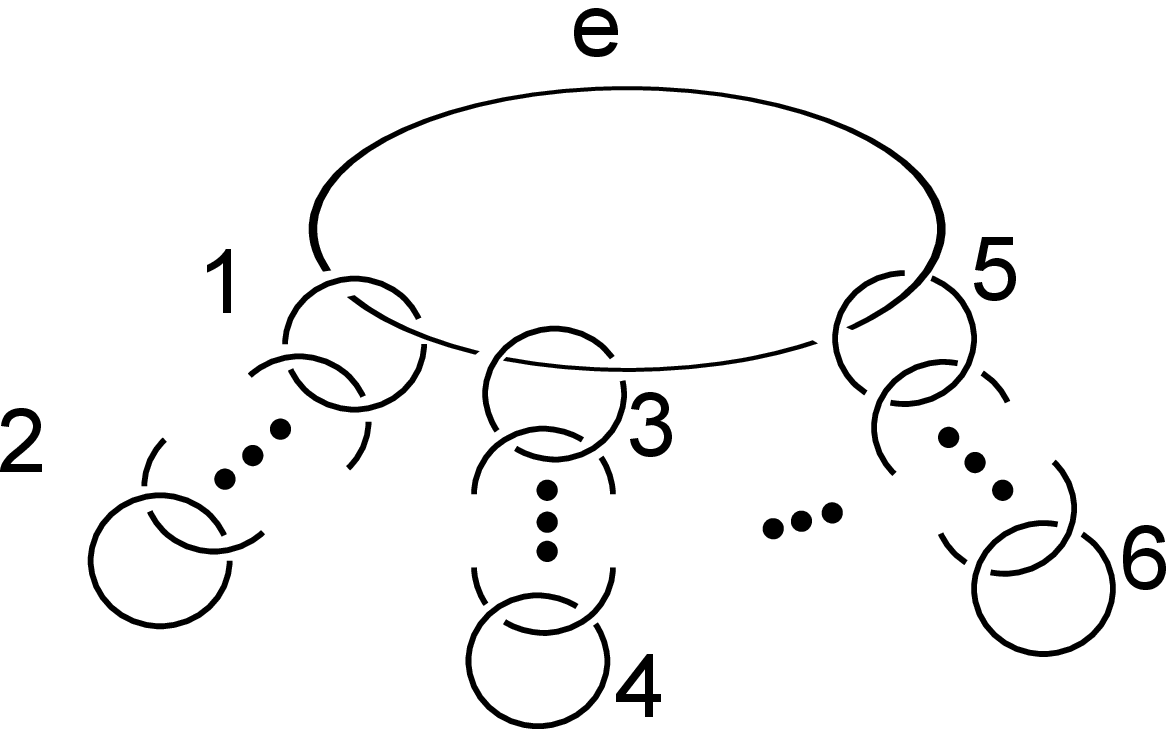}
	\psfragscanoff
	}
	\;\;\;\; \subfloat[Surgery diagram]{
	\label{SFS2}
	\psfragscanon
	\psfrag{1}{$e_0$}
	\psfrag{2}{$r_1$}
	\psfrag{3}{$r_2$}
	\psfrag{4}{$r_k$}
	\includegraphics[scale=.28]{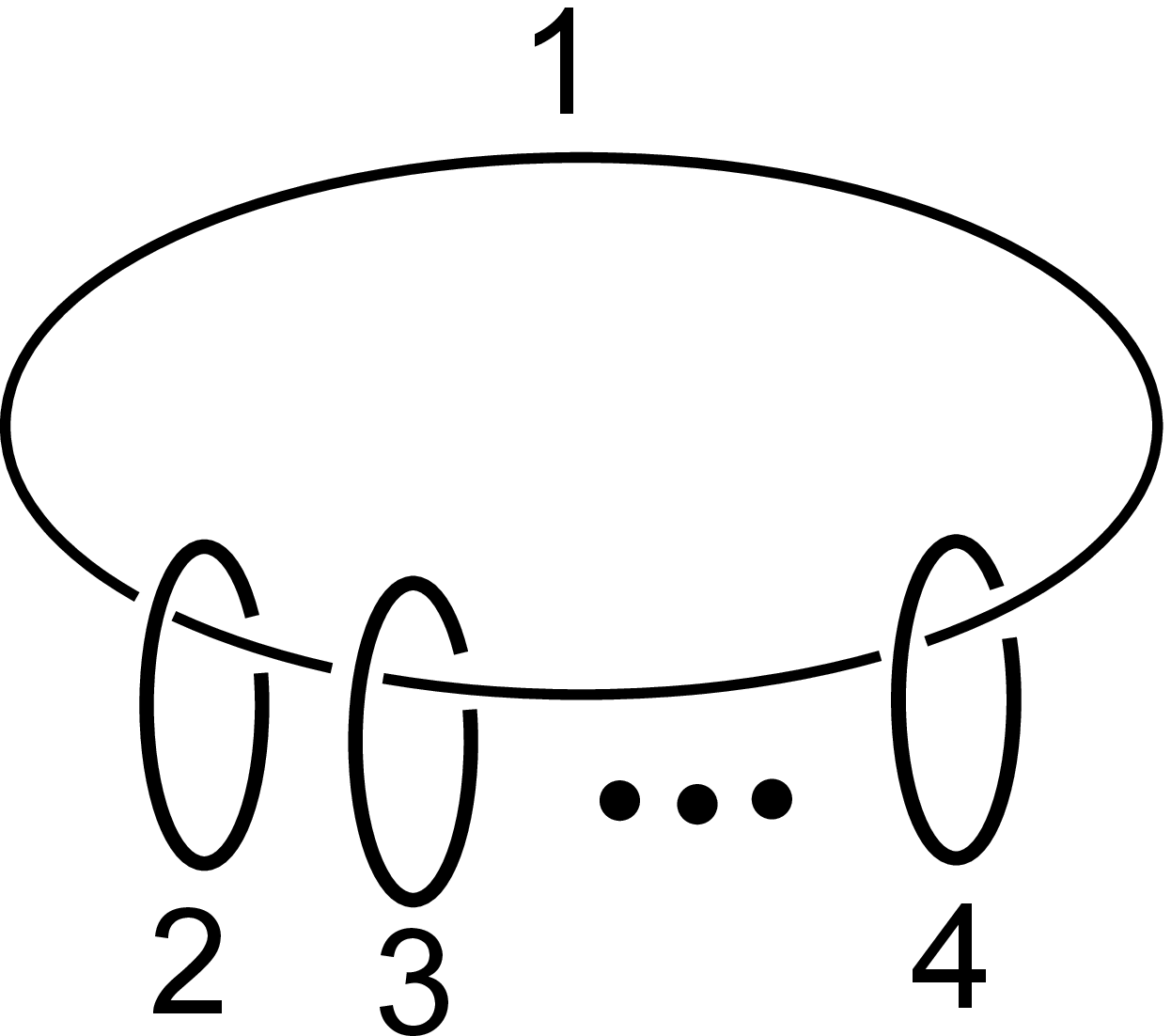}
	\psfragscanoff
	}
\caption{Handlebody diagram for a neighborhood of spheres, which can also be seen as a surgery diagram for the boundary of such a neighborhood which is a Seifert fibered space over $S^2$.}
\label{SeifertFiberedDiagrams}
\end{figure}
Note that the boundary of such a neighborhood of spheres is a Seifert fibered 3-manifold over $S^2$, with surgery diagram given by figure \ref{PlumbingSFS}. This surgery diagram is equivalent to that in figure \ref{SFS2} when
$$r_i=b_1^i-\frac{1}{b_2^i-\frac{1}{\ddots-\frac{1}{b_{n_i}^i}}}.$$
In general, a choice of $e_0\in \Z$ and $r_1,\cdots, r_k\in \Q$ such that $r_i<-1$ for all $i$, uniquely determines a Seifert fibered space by surgery as in figure \ref{SFS2}. We denote the manifold described by the surgery diagram by $Y(e_0;r_1,\cdots, r_k)$.

 We will say a Seifert fibered space is \emph{dually positive} if  it arises as the boundary of a dually positive plumbing of spheres. Any neighborhood of a dually positive configuration of symplectic spheres contains a symplectically convex neighborhood of the spheres (by \cite{GayStipsicz2} or \cite{GayMark}- see section \ref{cutpaste}), whose boundary inherits a fixed contact structure. As this contact structure arises as the boundary of a symplectic plumbing of spheres, we will refer to this contact structure as $\xi_{pl}$ (we suppress the Seifert invariants for simplicity). Therefore the results here provide a method for the classification of convex symplectic fillings of dually positive Seifert fibered spaces, with the contact structures $\xi_{pl}$.
 
 In the case of dually positive Seifert fibered spaces, the result of Mark and Gay will show that these contact structures which are induced on the convex boundary are supported by planar open books. These open book decompositions make it clear that the contact structures $\xi_{pl}$ coincide with those studied much earlier by Sch\"{o}nenberger \cite{Schonenberger} and Etg\"{u} and Ozbagci \cite{EtguOzbagci} and shown to be horizontal contact structures in \cite{Ozbagci}. By a result of Park and Stipsicz in \cite{ParkStipsicz}, the contact structure $\xi_{pl}$ is contactomorphic to the contact structure induced by the complex tangencies on the link of a complex singularity. This contact structure is sometimes referred to as the canonical contact structure in the literature.

In specific cases, we are able to state explicit classifications of the diffeomorphism types of possible symplectic fillings. Under less specific assumptions, we can find some loose upper bounds to obtain the following finiteness result.

\begin{theorem} 
If $Y$ is a dually positive Seifert fibered space with $k$ singular fibers and either $k\in \{3,4,5\}$ or $e_0\leq-k-3$, then $(Y,\xi_{pl})$ has finitely many minimal strong symplectic fillings up to diffeomorphism. If $Y$ is any dually positive Seifert fibered space, then the minimal strong symplectic fillings of $(Y,\xi_{pl})$ realize only finitely many values of Euler characteristic and signature. Loose upper bounds in both cases can be determined from the Seifert invariants.
\label{thm:finite}
\end{theorem}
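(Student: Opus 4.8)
The plan is to follow the capping strategy of Lisca \cite{Lisca} and of Bhupal and Stipsicz \cite{BhupalStipsicz}. Let $(W,\omega)$ be any minimal strong symplectic filling of $(Y,\xi_{pl})$. Because $Y$ is dually positive, its dual graph is star-shaped with positive central vertex, and the corresponding configuration of symplectic spheres can be realized as a concave symplectic cap $C$ with $\bdry C=(Y,\xi_{pl})$ (this is the cut-and-paste input of \cite{GayStipsicz2}, \cite{GayMark} discussed in section \ref{cutpaste}). Gluing gives a closed symplectic $4$-manifold $X=W\cup_Y C$. The cap $C$ contains the central symplectic sphere of \emph{positive} self-intersection, so $X$ contains a symplectic sphere of positive square; by McDuff's structure theorem $X$ is a rational symplectic $4$-manifold, hence diffeomorphic to a blow-up of $\C\bbP^2$ (or of $S^2\times S^2$), and in particular $b_2^+(X)=1$.

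Next I would run the homological bookkeeping. The hypotheses on the decorations force the defining plumbing to be negative definite with nondegenerate intersection form, so $Y$ is a $\Q$-homology sphere. A Mayer--Vietoris argument over $\Q$ then splits $H_2(X;\Q)\isom H_2(W;\Q)\oplus H_2(C;\Q)$ with vanishing cross terms, so that $b_2^{\pm}(X)=b_2^{\pm}(W)+b_2^{\pm}(C)$, while $\chi(X)=\chi(W)+\chi(C)$ and, by Novikov additivity, $\sigma(X)=\sigma(W)+\sigma(C)$. Since $b_2^+(X)=1$ while the central positive sphere already forces $b_2^+(C)\geq 1$, additivity gives $b_2^+(C)=1$ and hence $b_2^+(W)=0$: every minimal strong filling is negative definite. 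Standard arguments give $b_1(W)=0$ as well, so $\chi(W)=1+b_2(W)$ and $\sigma(W)=-b_2(W)$ are each determined by the single integer $b_2(W)$. Consequently both the finiteness of $\chi$ and $\sigma$ and the classification up to diffeomorphism hinge on controlling $b_2(W)$, equivalently the number $N$ of exceptional classes in $X$, since $b_2(W)=N-b_2^-(C)$ with $b_2^-(C)$ fixed by the Seifert invariants.

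The heart of the argument is to bound $N$. Here I would fix a diagonal basis of $H_2(X;\Z)$ and realize the spheres of $C$ as symplectic spheres, so that each sphere $S$ in the configuration satisfies the adjunction equality $c_1(X)\cdot[S]=[S]^2+2$ together with the prescribed intersection pattern dictated by $e_0$ and the $b_i^j$. This exhibits the intersection lattice of the cap, of signature $(1,b_2^-(C))$, as an embedded sublattice of the diagonal lattice $\langle 1\rangle\oplus N\langle -1\rangle$, whose orthogonal complement must carry the negative definite form of $W$. A counting argument on such lattice embeddings, driven by positivity of the symplectic areas and the adjunction constraints, bounds $N$ by a quantity computable from $e_0$, the $r_i$, and the continued-fraction lengths $n_i$. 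This crude bound already yields the general statement: for every dually positive $Y$, the values of $\chi(W)$ and $\sigma(W)$ lie in a finite, explicitly estimable range.

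Finally, upgrading from a bound on $b_2(W)$ to finitely many diffeomorphism types requires the more delicate enumeration of the admissible lattice embeddings up to the symmetries of the diagonal lattice, in the style of \cite{Lisca} and \cite{BhupalStipsicz}. Each admissible embedding pins down the homology data of the complementary filling, which by the structure theory of rational surfaces realizes $W$, up to diffeomorphism, as a specific rational blow-down of the plumbing. The extra hypotheses $k\in\{3,4,5\}$ or $e_0\leq -k-3$ are exactly the conditions under which this enumeration is provably finite; outside them the embedding problem, while still bounding $b_2(W)$, produces families I do not expect to reduce to finitely many diffeomorphism types by these methods. I anticipate this last enumeration to be the main obstacle: translating the finitely many combinatorial solutions into genuine control of the smooth filling, and verifying that the listed hypotheses are precisely what forces the combinatorics to terminate.
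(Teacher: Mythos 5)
Your setup — gluing $W$ to the concave cap built from the dual graph, invoking McDuff to identify the result with a blow-up of $\C P^2$, and running adjunction against a diagonal basis — is the same as the paper's, and your homological bookkeeping is correct and even slightly cleaner than the paper's (dually positive plumbings are negative definite, so $Y$ is a rational homology sphere, Mayer--Vietoris kills $b_1(W)$ and $b_2^+(W)$, and $\chi(W)$, $\sigma(W)$ are both functions of $b_2(W)$). But there is a genuine gap in how you bound the number of exceptional classes. The cap's lattice embeds into $\langle 1\rangle \oplus M\langle -1\rangle$ for \emph{every} sufficiently large $M$ (extend by zero on the extra summands), so no counting argument on lattice embeddings, however driven by adjunction or symplectic area, can bound $M$ by itself. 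The mechanism in the paper is minimality plus positivity of intersections: choose $J$ making the cap spheres holomorphic; every exceptional class has a $J$-holomorphic representative, which must meet the cap (otherwise it would be an exceptional sphere inside $W$, contradicting minimality), so every basis class $e_i$ appears with nonzero coefficient in some $[C_j]$, forcing $M\leq \sum_j (n_j+1)$. You state the theorem for minimal fillings but never use minimality; since the finiteness of $\chi$ and $\sigma$ is simply false for non-minimal fillings, any argument that omits this step cannot close.

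The second, more serious, problem is that you have the role of the hypotheses $k\in\{3,4,5\}$ or $e_0\leq -k-3$ inverted. The combinatorial enumeration of homology representations is finite for \emph{every} dually positive $Y$ (the paper's crude bound is $N^{N+1}$ with $N=\sum_j(n_j+1)$), so these hypotheses are not what "forces the combinatorics to terminate." What they buy is geometric uniqueness: one must show that each homology representation determines the embedding of the cap into $\C P^2\# M\overline{\C P^2}$ uniquely up to isotopy, and hence determines the complement $W$ up to diffeomorphism. The paper does this by blowing down \`a la Lisca and then proving (lemmas \ref{lemma:4spherehtpy} and \ref{lemma:configconnected}) that the resulting configuration of $J$-holomorphic $+1$-spheres can be isotoped, preserving its intersection pattern, to complex projective lines, and that the space of projective-line configurations with that pattern is connected and nonempty. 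The listed hypotheses are exactly what confines the blown-down configurations to the patterns $\mathcal{I}^d_j$ for which those lemmas hold; when $k$ is large and $e_0=-k-1$ or $-k-2$, lines through three or more multi-points can occur, and the space of such line configurations can be disconnected or empty, so homology data no longer pins down the filling. Your sentence asserting that each admissible embedding "realizes $W$, up to diffeomorphism" by the structure theory of rational surfaces assumes precisely this missing uniqueness statement — it is the hard analytic core of the paper, not a consequence of rational-surface theory. (A smaller error: the conclusion that every filling is a rational blow-down of the plumbing is false in general; theorem \ref{thm:newfillings} produces minimal fillings that are not rational blow-downs of any subgraph.)
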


Note that the fact that the minimal symplectic fillings realize finitely many values of Euler characteristic and signature follows from the fact that these contact structures are supported by planar open book decompositions by a result of Kaloti \cite{Kaloti}. It is frequently possible to obtain tight upper bounds on the number of diffeomorphism types or the values of Euler characteristic and signature in specific examples, by analyzing the dual graph associated to the Seifert fibered space as will be described in section \ref{mainargument}.

In some simple families of examples, we explicitly work out the possible diffeomorphism types of the minimal symplectic fillings. In some cases, all fillings are related to the original plumbing of spheres by a rational blow-down of a subgraph of the original configuration. 

\begin{theorem}
If $(Y,\xi_{pl})$ is the boundary of a dually positive plumbing of spheres, where the star-shaped graph has exactly three arms, the central vertex has self-intersection coefficient $-4$, and
\begin{enumerate}
\item each arm has arbitrary length, but each sphere in any arm has self-intersection coefficient $-2$\\ OR
\item each arm has length one, and each sphere in each arm has self-intersection coefficient strictly less than $-4$
\end{enumerate}
then $(Y,\xi_{pl})$ has exactly two minimal strong symplectic fillings up to diffeomorphism, given by the original symplectic plumbing and the manifold obtained by rationally blowing down the central $-4$ sphere.
\label{thm:easyexamples}
\end{theorem}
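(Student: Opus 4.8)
The plan is to specialize the general classification scheme of Section \ref{mainargument} to these two families and then to carry out the resulting combinatorial enumeration by hand. Let $W$ be a minimal strong symplectic filling of $(Y,\xi_{pl})$. Following the main argument, I would first cap $W$ off by the symplectic plumbing $C$ associated to the dual graph $G^*$ of Section \ref{dualgraph}, producing a closed symplectic $4$-manifold $X = W\cup_Y C$. Since $Y$ is dually positive, the central vertex of $G^*$ is a symplectic sphere of positive self-intersection, so by McDuff's classification $X$ is a blow-up of a rational or ruled surface, and the positive sphere rules out the irrational ruled case; hence I may take $X \isom \mathbb{CP}^2 \# N\overline{\mathbb{CP}^2}$ with standard basis $h,e_1,\dots,e_N$ and canonical class $K = -3h + \sum_i e_i$. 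In this basis each cap sphere $C_v$ is represented by a class constrained by the adjunction equality $K\cdot C_v = -C_v^2 - 2$, by the requirement that the $C_v$ realize exactly the intersection pattern of $G^*$, and by positivity of symplectic area. Minimality of $W$ forces every exceptional class of $X$ to meet $C$, which bounds $N$ and makes the set of admissible configurations finite. Since the diffeomorphism type of $W = X\setminus \nu(C)$ is determined by the homological embedding of $C$ up to the orientation-preserving self-diffeomorphisms of $X$, the classification reduces to enumerating these embeddings modulo that action.

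Next I would compute $G^*$ explicitly using the continued-fraction (Riemenschneider) duality of Section \ref{dualgraph}. In case (1) each arm is a chain of $n_i$ spheres of weight $-2$, whose continued fraction equals $-(n_i+1)/n_i$ and whose dual is the single entry $-(n_i+1)$; thus $C$ is the star with a positive central vertex and three length-one arms of weights $-(n_1+1),-(n_2+1),-(n_3+1)$, a cap with only four spheres. In case (2) each arm is a single vertex of weight $a_i\le -5$, whose dual is a chain of $|a_i|-1\ge 4$ spheres of weight $-2$, so $C$ is a star with a positive central vertex and three $(-2)$-chains. Note the pleasant duality that the cap of case (1) has the shape of the plumbing of case (2), and conversely; in both settings $G^*$ is small or rigid enough that the admissible classes can be listed directly.

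I would then solve the constraint system. The heart of the argument is a rigidity statement for $(-2)$-chains: a symplectic sphere of square $-2$ meeting its neighbors once is forced, after normalization by diffeomorphisms of $X$, into a class of the form $e_a-e_b$, so a whole chain occupies a fixed block with no free parameters, while the arm and central classes are pinned into the expected form $h-\sum e$. Applying this to the relevant chains (in the cap for case (2), in the complement for case (1)) isolates the class of the central $-4$ sphere as the only remaining modulus, and one checks that exactly two normalized configurations satisfy adjunction, the prescribed intersections, and positivity simultaneously. Reconstructing $W=X\setminus\nu(C)$ for each, one configuration returns the original plumbing $P$, while the other returns the manifold obtained from $P$ by rationally blowing down the central $-4$ sphere, the complement now containing the rational homology ball bounded by $L(4,1)$ in place of the $-4$ disk bundle. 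Both are genuine minimal strong symplectic fillings: the rational blow-down is symplectic via the cut-and-paste construction of Section \ref{cutpaste} (Symington), and minimality follows by verifying that neither complement carries a symplectic $(-1)$-sphere.

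The main obstacle is the enumeration, and in particular making it \emph{uniform in the arm lengths} in case (1), where the number of blow-ups grows without bound as the $n_i$ increase, so the constraint system has unboundedly many variables. The rigidity lemma for $(-2)$-chains is exactly what collapses this apparent freedom, and proving it rigorously — showing that no ``exotic'' distribution of the classes $e_j$ among the chain spheres is compatible with the intersection pattern and positivity — is where the real work lies. One must then show that the isolated central class admits precisely the two claimed values, rule out all intermediate partial configurations (which would correspond to spurious additional fillings), and confirm that the two surviving embeddings are genuinely distinct, i.e. not identified by any self-diffeomorphism of $X$. With these steps in place the count of exactly two diffeomorphism types follows.
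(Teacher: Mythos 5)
Your outline follows the paper's strategy (cap off with the dual plumbing, apply McDuff, enumerate homology representations of the cap, realize the two survivors by the original plumbing and by Symington's rational blow-down of the central $-4$ sphere), and your dual-graph computations for both families are correct; but two of your central steps have genuine problems.

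First, you place the $(-2)$-chain analysis on the wrong side in case (1). The homological constraints can only be imposed on the spheres of the cap $C$: the filling $W$ is the unknown manifold, so there is no sense in which one analyzes classes of chains ``in the complement for case (1)'', and the central $-4$ sphere, which also lies in $W$, never carries a homology class in this scheme --- it is not the ``remaining modulus'' in either case. In case (1) the cap is just the four-sphere star, so no chain rigidity is needed or available: the entire enumeration is Lemma \ref{lem:homologyint}, namely the classes $[C_j]=\ell-e_{i_1^j}-\cdots-e_{i_{n_j+1}^j}$, $j=1,2,3$, must pairwise share exactly one $e_i$, and three sets can pairwise intersect in a single element in exactly two ways (all three share one class, or the three pairs share three distinct classes). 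This is what makes the count uniform in the arm lengths: the cap never grows in combinatorial complexity, only the number of distinct $e_i$'s appearing in each $[C_j]$ does. Your rigidity lemma is relevant only in case (2), where the cap genuinely contains long $(-2)$-chains; there it amounts to the paper's inductive argument, and note that it is emphatically not unconditional --- for dual chains of length at most $3$ (original arm coefficients $-3$ or $-4$) exotic distributions of the $e_i$ do exist and produce additional fillings (see section \ref{rationalclassifications}), which is precisely why the theorem hypothesizes coefficients strictly less than $-4$.

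Second, you assert that the homological embedding of $C$ determines $W=X\setminus\nu(C)$ up to diffeomorphism, modulo self-diffeomorphisms of $X$. That implication is the hardest part of the paper and cannot be taken for granted: two embeddings realizing the same homology classes could a priori be smoothly inequivalent. The paper's route is to blow down following Lisca, reducing the cap to $d+1$ spheres homologous to $\C P^1$, then to show (Lemma \ref{lemma:4spherehtpy}, via Gromov's theorem together with a delicate control of how the intersection configuration can degenerate along an isotopy through $J_t$-holomorphic spheres) that the configuration is isotopic, preserving its intersection pattern, to a configuration of complex projective lines, and finally to show (Lemma \ref{lemma:configconnected}) that the space of line arrangements with the given pattern is connected and nonempty. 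Only then does a homology representation pin down the embedding, hence the complement, up to diffeomorphism. Without a substitute for these two lemmas, your enumeration bounds the number of homology representations, not the number of diffeomorphism types, and the count of exactly two is not established. (A small simplification for the last step: the two surviving complements are distinguished simply by Euler characteristic, since the two configurations use different numbers of exceptional classes, so no discussion of self-diffeomorphisms of $X$ is required to see they are distinct.)
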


In the previous cases the central vertex had the largest possible coefficient to satisfy the dually positive condition. When the coefficient on the central vertex $e_0\leq -k-3$ where $k$ is the number of arms in the graph, we can prove a similar theorem for any value of $k\geq 3$.

\begin{theorem}
If $(Y,\xi_{pl})$ is the boundary of a dually positive plumbing of spheres with $k$ arms, where the coefficient on the central vertex is $e_0\leq -k-3$, and the coefficients on the spheres in the arms are all $-2$, then all diffeomorphism types minimal strong symplectic fillings are obtained from the original plumbing of spheres by a rational blow-down of the central vertex sphere together with $-e_0-4$ spheres of square $-2$ in one of the arms.
\label{thm:e0neg}
\end{theorem}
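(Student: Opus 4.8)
The plan is to follow the capping-and-enumeration strategy set up in Section \ref{mainargument}. Given a minimal strong symplectic filling $(W,\omega)$ of $(Y,\xi_{pl})$, I would first glue on the symplectic cap associated to the dual graph $\Gamma'$ (this cap exists and carries a configuration of symplectic spheres matching $\Gamma'$ precisely because $Y$ is dually positive, by \cite{GayStipsicz2} or \cite{GayMark}), producing a closed symplectic $4$-manifold $X = W \cup_Y C$. Since $\Gamma'$ is star-shaped with a central vertex of positive self-intersection, $X$ contains a symplectic sphere of positive square; by McDuff's classification of symplectic manifolds containing such a sphere, $X$ is a rational surface, so after blowing down I may take $X = \mathbb{CP}^2 \# N\,\overline{\mathbb{CP}}^2$, and the minimality of $W$ lets me assume all the exceptional spheres lie in the cap $C$. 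This reduces the classification to an enumeration of the homology classes of the symplectic spheres of $C$ inside $X$.

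Next I would compute $\Gamma'$ explicitly for this family. With all arm coefficients equal to $-2$ and a chain of length $n_i$ in arm $i$, the continued fraction is $r_i = -\tfrac{n_i+1}{n_i}$, whose Riemenschneider dual is the single integer $n_i+1$; thus each arm of the plumbing dualizes to a \emph{single} vertex, and $\Gamma'$ is a star with $k$ leaves attached to one positive central vertex whose self-intersection is determined by $e_0$ and the $n_i$. Writing the classes of the spheres of $C$ in the standard basis $h, e_1, \dots, e_N$ of $H_2(X)$, I would pin them down with three constraints: the intersection pattern prescribed by $\Gamma'$, the adjunction equality $K\cdot A = A^2 + 2$ for each sphere (they are genus-$0$ symplectic spheres), and positivity of symplectic area. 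Solving this system is then a finite Diophantine problem, made tractable by the fact that the dual arms are single vertices.

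From each solution I would read off the complement $W = X \setminus \nu(C)$ up to diffeomorphism and match it against the manifolds obtained from the plumbing by rational blow-down. The key arithmetic observation is that the sub-chain consisting of the central vertex together with $-e_0-4$ consecutive $(-2)$-spheres in a single arm has continued fraction $\tfrac{(-e_0-2)^2}{-e_0-3}$; that is, its boundary is the lens space $L(p^2,p-1)$ with $p = -e_0-2$, which bounds the rational homology ball $B_p$, so this sub-chain is exactly a standard rational-blow-down configuration. I expect the enumeration to show that the only admissible homological solutions correspond either to the empty blow-down (returning the original plumbing) or to precisely this one chain being blown down in some arm.

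The hard part will be the enumeration together with the geometric identification: controlling the highly negative central coefficient $e_0\le -k-3$ so that no spurious embeddings of the dual spheres arise, and proving that every admissible homology solution is both realized by an honest symplectic filling and diffeomorphic to one of the claimed rational blow-downs, rather than some other smooth manifold sharing the same intersection form. I would expect the hypotheses $e_0\le -k-3$ and all-$(-2)$ arms to be exactly what makes the Diophantine system rigid enough to exclude every possibility outside these two families.
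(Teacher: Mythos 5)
Your overall strategy is the paper's (cap, McDuff, homology enumeration, identification of complements with rational blow-downs), and your arithmetic identifying the sub-chain $[e_0,-2,\dots,-2]$ with the standard configuration $C_p$, $p=-e_0-2$, bounding $L(p^2,p-1)$ and hence $B_p$, is correct. However, your identification of the concave cap is wrong, and this is not a cosmetic slip. The dual graph produced by the construction of \cite{StipsiczSzaboWahl} (see section \ref{duallypositive}) is not a $k$-leaf star whose central square is ``determined by $e_0$ and the $n_i$'': its central vertex always has square $+1$, and it has $d=-e_0-1$ arms, of which only $k$ are the single-vertex duals of the original arms; the remaining $d-k=-e_0-1-k\geq 2$ arms are single spheres of square $-1$. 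Moreover, the cap is not supplied by \cite{GayStipsicz2} or \cite{GayMark} --- those give the \emph{convex} neighborhood of the original plumbing (hence $\xi_{pl}$); the concave cap is the complement of that neighborhood inside a blown-up sphere bundle, which is what the dual graph construction produces. Finally, with a central cap sphere of square $p\geq 2$ you lose the clean use of McDuff's theorem, which is applied here with $p=1$ precisely so that the central sphere is identified with $\C P^1$ and $[C_0]=\ell$.

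The missing $(-1)$-arms are exactly the engine of the rigidity you ``expect.'' In lemma \ref{lem:e0neg}, the classes of those $d-k\geq 2$ extra arms are forced (up to relabelling) to be $\ell-e_1-e_{k+1},\dots,\ell-e_1-e_d$, all sharing a single exceptional class $e_1$; each remaining class $[C_j]$, $j\leq k$, must then contain either $e_1$ or \emph{all} of $e_{k+1},\dots,e_d$, and at most one class can do the latter (and only if its arm is long enough, $n_j+1\geq -e_0-2$). This is what collapses the enumeration to exactly two families --- the original plumbing and the blow-down of $C_p$ in one arm --- and the hypothesis $e_0\leq -k-3$ enters only through guaranteeing at least two $(-1)$-arms and that only the intersection configurations $\mathcal{I}_1^d$, $\mathcal{I}_2^d$ arise, so that lemmas \ref{lemma:4spherehtpy} and \ref{lemma:configconnected} give a unique embedding for each homology solution. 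With your $k$-leaf cap there is no such mechanism: the enumeration you would actually face resembles the $e_0=-k-1$ case (theorem \ref{thm:newfillings}), where the system is \emph{not} rigid and genuinely new fillings beyond rational blow-downs appear. So as written your Diophantine step would not close. Once the cap is corrected, the realization step you flag but do not carry out is immediate from Symington \cite{Symington1}: the rational blow-down is performed in the interior of the filling and does not disturb the convex boundary.
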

In special cases, as determined in \cite{StipsiczSzaboWahl}, the entire star-shaped graph can be rationally blown down. The simplest example which is dually positive is shown in figure \ref{fig4333}. We can show that the expected rational blow-downs provide a complete classification for this graph.
\begin{theorem}
If $(Y,\xi_{pl})$ is the boundary of a dually positive plumbing of spheres in the configuration of figure \ref{fig4333}, there are exactly three diffeomorphism types of minimal strong symplectic fillings of $(Y,\xi_{pl})$: the original plumbing of spheres, the rational blowdown of the central $-4$ sphere, and the rational blowdown of the entire configuration.
\label{thm:4333}
\end{theorem}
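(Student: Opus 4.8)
The plan is to adapt the capping-and-embedding strategy of Lisca \cite{Lisca} and Bhupal--Stipsicz \cite{BhupalStipsicz} set up in Section \ref{mainargument} to this specific graph. Let $(W,\omega)$ be a minimal strong symplectic filling of $(Y,\xi_{pl})$, where $Y$ is the boundary of the plumbing with central weight $-4$ and three length-one arms of weight $-3$. Because $Y$ is dually positive, its dual graph is star-shaped with central vertex of positive self-intersection, and I would glue the associated symplectic cap to $W$ to produce a closed symplectic $4$-manifold $Z$ containing a configuration $C$ of symplectic spheres realizing the dual graph. Since $C$ contains a symplectic sphere of positive square, McDuff's classification of rational symplectic manifolds forces $Z$ to be a rational surface, i.e.\ $Z \isom \mathbb{CP}^2 \# N\, \overline{\mathbb{CP}^2}$ for some $N$. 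Planarity of $\xi_{pl}$ (via Gay--Mark and Kaloti) bounds the Euler characteristic and signature of any such $W$, which combined with the fixed topology of the cap pins down $N$ and the Betti numbers of $W$, leaving only finitely many homological configurations to examine.

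The combinatorial core is then a homological enumeration of the possible embeddings of $C$ into $Z$. Fixing a standard basis $h,e_1,\dots,e_N$ of $H_2(Z;\Z)$ with $h^2=1$, $e_i^2=-1$ and $c_1(Z)=3h-\sum_i e_i$, I would write each sphere class as $[S_j]=a_j h-\sum_i b_{ji}e_i$. The genus-zero adjunction equality $c_1(Z)\cdot[S_j]=[S_j]^2+2$, together with the prescribed self-intersections and the pairwise intersection numbers dictated by the dual graph (adjacent spheres meeting once, others disjointly), yields a finite system of quadratic Diophantine equations in the $a_j$ and $b_{ji}$. Solving this system---using positivity of intersections of distinct symplectic spheres and the light-cone and Cauchy--Schwarz estimates standard in this method to cut down the search---should produce exactly the homology types corresponding to the three claimed fillings.

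Finally, for each admissible solution I would reconstruct the complementary filling $W=Z\setminus \nu(C)$ up to diffeomorphism and verify minimality. The three expected endpoints are the original plumbing (no blow-down), the filling obtained by rationally blowing down the central $-4$ sphere to a rational homology ball, and the filling obtained by rationally blowing down the entire configuration, which is precisely the Stipsicz--Szab\'{o}--Wahl rational-homology-ball replacement permitted for this graph \cite{StipsiczSzaboWahl}. I expect the main obstacle to be twofold: controlling the Diophantine enumeration so that no spurious lattice solutions survive, and---more delicately---passing from a homological embedding of $C$ back to the actual diffeomorphism type of $W$, showing that distinct solutions realize exactly these three manifolds and that each is minimal. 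Ruling out extra solutions that would give a fourth filling, and confirming that intermediate blow-downs not on the list are arithmetically obstructed, is where the specific numerology of the weights $-4,-3,-3,-3$ must be used decisively.
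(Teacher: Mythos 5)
Your overall strategy---cap off the filling with the dual-graph plumbing, invoke McDuff's theorem to land in a blow-up of $\C P^2$, enumerate the homology classes of the cap spheres via adjunction and the prescribed intersection numbers, and then identify the complements---is the same as the paper's, and your enumeration step would indeed yield exactly three solutions (the paper's Case A, plus two sub-cases of Case B for the classes of the three outer $-2$-spheres of the cap). However, there is a genuine gap at precisely the step you flag as the ``more delicate'' obstacle and then leave unresolved: passing from a homology representation of the cap to a well-defined diffeomorphism type of the complement. A list of homology classes does not by itself determine the embedding of the cap up to isotopy, so it does not determine $W=Z\setminus\nu(C)$; without an argument here, your Diophantine count only bounds the number of fillings by (number of lattice solutions) $\times$ (number of isotopy classes of embeddings realizing each solution), and the latter factor is a priori unknown. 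The paper closes this gap with a two-stage geometric argument that your proposal does not supply: first, following Lisca's Theorem 4.2, the configuration is blown down $J$-holomorphically to four spheres in $\C P^2$, each homologous to $\C P^1$, whose intersection pattern ($\mathcal{I}_1^3$ or $\mathcal{I}_2^3$) is dictated by the homology solution; second, Lemmas \ref{lemma:4spherehtpy} and \ref{lemma:configconnected} show that such $J_0$-holomorphic spheres can be isotoped, preserving the intersection configuration, to complex projective lines, and that the space of line configurations of each type is connected and nonempty. Only with these lemmas is the embedding of a neighborhood of the cap unique up to isotopy for each homology type, so that the three homology solutions give exactly three complements.

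A second, smaller omission concerns realization: the embedding argument only gives an \emph{upper} bound, since a symplectic embedding of the dual configuration need not admit a concave neighborhood, so each of the three diffeomorphism types must be separately exhibited as a convex filling of $(Y,\xi_{pl})$. For the blow-down of the central $-4$-sphere this follows from Symington's symplectic rational blow-down \cite{Symington1}, but for the rational homology ball filling the paper identifies the complement with the smoothing of the corresponding normal surface singularity and uses \cite{StipsiczSzaboWahl} together with the identification of $\xi_{pl}$ with the canonical (Milnor fillable) contact structure from \cite{ParkStipsicz}; you gesture at the Stipsicz--Szab\'{o}--Wahl replacement, but verifying that the induced contact structure on its boundary is $\xi_{pl}$ (rather than some other tight structure on $Y$) is a necessary step, not automatic.
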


\begin{figure}
\begin{center}
\psfragscanon
\psfrag{1}{$-3$}
\psfrag{2}{$-4$}
\psfrag{3}{$-3$}
\psfrag{4}{$-3$}
\includegraphics[scale=.3]{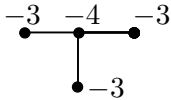}
\psfragscanoff
\end{center}
\caption{Graph referred to in theorem \ref{thm:4333}.}
\label{fig4333}
\end{figure}

The configuration in the previous theorem generalizes to the family $\mathcal{W}_{p,q,r}$ studied in \cite{StipsiczSzaboWahl}. We classify the symplectic fillings corresponding to this family of graphs in section \ref{rationalclassifications}. In this case, all convex fillings are obtained by a rational blow-down of a subgraph or the entire graph.

However, there are new manifolds arising as possible fillings that are not obtained by a rational blowdown of a subgraph. Note that these replacements still have smaller Euler characteristic than the original plumbing.

\begin{theorem}
If $(Y,\xi_{pl})$ is the boundary of a dually positive plumbing of spheres, with $k=4$ or $k=5$ arms, whose central vertex has coefficient $-k-1$, and all the vertices in the arms have coefficient $-2$ and each arm has length at least $k-3$, then all possible diffeomorphism types of symplectic fillings determined by this process (listed as handlebody diagrams in section \ref{newexamples}) are realized as strong symplectic fillings of $(Y,\xi_{pl})$.
\label{thm:newfillings}
\end{theorem}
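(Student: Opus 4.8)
The plan is to prove the theorem as a pure realization statement: the finiteness machinery of section \ref{mainargument} (and Theorem \ref{thm:finite}) already shows that the listed handlebody diagrams of section \ref{newexamples} are an \emph{upper bound} for the diffeomorphism types of minimal strong fillings, so it remains only to exhibit, for each entry on the list, an honest strong symplectic filling of $(Y,\xi_{pl})$ of that type. Recall the mechanism producing the list: a minimal filling $W$ is capped by the symplectic plumbing $C$ dual to the star-shaped graph, and because the dual central vertex has positive self-intersection, the closed manifold $X = W \cup C$ is rational (or ruled) by McDuff's theorem; the dual spheres then sit inside $X$ as a symplectic configuration, and adjunction together with the intersection form constrains their homology classes (relative to $H$ and the exceptional classes $E_1,\dots,E_N$) to finitely many assignments, each of which determines a candidate $W = X \setminus C$. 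My task is to run this correspondence backwards.

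First I would fix the ambient model as a blow-up of $\C P^2$ with explicit generators $H, E_1, \dots, E_N$ of $H_2$, and for each candidate record the prescribed classes of the central sphere and of the arm spheres. I would then build the configuration geometrically rather than abstractly: starting from a natural holomorphic arrangement in $\C P^2$ suited to $e_0 = -k-1$ (a pencil of conics through common base points is the right seed, with lines through those points supplying the arms), I would blow up repeatedly at the base and intersection points so that the proper transforms of the seed curves acquire exactly the required self-intersections ($-2$ along the arms, and the prescribed value for the image of the central sphere) while keeping every intersection positive and transverse. Since the seed curves are holomorphic, their proper transforms are symplectic spheres, and after a Gompf isotopy \cite{GompfSymplGluing} they can be made to meet $\omega$-orthogonally; the complement of this dual configuration is then a genuine strong symplectic filling, and because the configuration reconstructs the same Seifert invariants its convex boundary carries precisely $\xi_{pl}$, using the open-book description of Gay and Mark \cite{GayMark}.

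The key bookkeeping step is to check that each candidate's homology data is matched by a legitimate blow-up sequence. Here the hypothesis that each arm has length at least $k-3$ should do the work: it guarantees there are enough $-2$ spheres in the arms to absorb all the exceptional divisors needed to separate the central curve from the arms, so that no two spheres are forced to share an exceptional class in a way that would destroy transversality or overshoot a required self-intersection. I would verify this arm by arm, confirming that the sequence of self-intersection drops produced by the blow-ups realizes the arm decorations $-2$ and leaves the central sphere in the class dictated by the enumeration, for every entry on the list.

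The hard part is constructive rather than eliminative. For the ``new'' fillings — those not arising as a rational blow-down of any subgraph — one cannot invoke an existing cut-and-paste operation (unlike in Theorems \ref{thm:easyexamples}, \ref{thm:e0neg}, and \ref{thm:4333}), so the embedding must be produced explicitly, and one must verify positivity of all intersections together with the adjunction equalities \emph{simultaneously}, since these competing constraints are what the enumeration balanced in the first place. The most delicate point will be confirming that the complement's induced contact structure is exactly $\xi_{pl}$, rather than some other filling of a merely diffeomorphic three-manifold; this is where matching the Seifert invariants of the reconstructed dual configuration, and the planar open-book characterization of $\xi_{pl}$, carry the argument. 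I expect the arm-length bound $k-3$ to be exactly the threshold below which some candidate configurations cannot be separated by blow-ups, which is why the realization statement is asserted only in that range.
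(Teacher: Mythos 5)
Your upper-bound reduction is fine, but your realization step has a genuine gap, and it is exactly the gap the paper itself flags as an open problem. You propose to embed the dual configuration holomorphically (a pencil of conics plus lines, then blow up) into a blow-up of $\C P^2$ and declare the complement a strong symplectic filling. But a symplectic embedding of the dual configuration does not by itself produce a filling: for the complement to have \emph{convex} boundary you need the embedded configuration to admit a \emph{concave} neighborhood, and it is not known that every such embedding has one. The paper states this explicitly at the end of section \ref{uniqueembedding} (``it does not ensure that there is a concave neighborhood of this configuration (it is not known whether this always exists)'') and poses it again as an open question in the final section; this is precisely why the homological enumeration only ever gives upper bounds and why lower bounds must be found by other means. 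Your second delicate point --- identifying the induced contact structure --- also cannot be settled the way you suggest: matching Seifert invariants only pins down the 3-manifold, not the contact structure, and without a concave neighborhood there is no induced contact structure on the complement's boundary to compare with $\xi_{pl}$ in the first place.

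The paper's actual proof avoids complements entirely and builds the convex structures directly on the candidate manifolds. Each handlebody diagram produced by the enumeration in section \ref{newexamples} is exhibited as a positive Lefschetz fibration over $D^2$ with fiber a $k$-holed disk, the $-1$-framed 2-handles being the vanishing cycles. The induced open book on the boundary has monodromy $M_{4a},\dots,M_{4c}$ or $M_{5a},\dots,M_{5e}$, a product of Dehn twists $D_{h_1,\dots,h_j}$, and the paper proves, via the lantern relation and its generalization (lemma \ref{lanternmove}), that all of these monodromies are equal to the Gay--Mark monodromy of theorem \ref{thmplumbingconvex}, which supports $\xi_{pl}$. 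Since these Lefschetz fibrations are allowable after stabilization, the results of \cite{AkbulutOzbagci1}, \cite{LoiPiergallini}, and \cite{Plamenevskaya} endow each with a Stein structure whose boundary contact structure is supported by that open book, hence is $\xi_{pl}$; this gives the strong symplectic fillings. To salvage your approach you would have to prove the missing concave-neighborhood statement, which is an open problem, not a routine verification.
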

Some of these diffeomorphism types cannot be obtained by a rational blowdown, or by replacing a neighborhood of a linear plumbing of spheres with a different symplectic filling of its lens space boundary. Namely, these are genuinely new diffeomorphism types that may be convex symplectic fillings. Note, if the arms have length less than $k-3$, some of the diffeomorphism types cannot be realized, but the classification can still be made.

The main argument that provides the set-up for all these results will be given in section \ref{mainargument}, with the proof of theorem \ref{thm:finite} at the end of that section. For concreteness, we follow this in section \ref{simplestexamples} by a detailed proof of the classification of fillings of a simple family of examples, proving part (1) of theorem \ref{thm:easyexamples}. We follow this by classification results that can be stated in terms of rational blow-downs in section \ref{rationalclassifications}, proving theorem \ref{thm:e0neg}, part (2) of theorem \ref{thm:easyexamples},  theorem \ref{thm:4333}, and its generalization to the family $\mathcal{W}_{p,q,r}$. In section \ref{newexamples}, we give explicit descriptions of the new 4-manifolds which strongly symplectically fill the dually positive Seifert fibered spaces in theorem \ref{thm:newfillings}. Section 6 contains some further observations and questions.

\textbf{Acknowledgements:} I am incredibly grateful to my advisor Robert Gompf for help, advice, and support throughout this project. Many thanks for comments from Heesang Park and Burak Ozbagci and for helpful conversations and feedback from John Etnyre, Matt Hedden, Amey Kaloti, Jeremy Van Horn-Morris, and especially to Tom Mark and Cagri Karakurt for patiently explaining some important results needed for this work. This material is based upon work supported by the National Science Foundation under Grant No. DGE-1110007.

}

\section{Upper bounds on diffeomorphism types of strong symplectic fillings}
{
\label{mainargument}
The main argument here gives upper bounds (in terms of explicit diffeomorphism types) of strong symplectic fillings of dually positive Seifert fibered spaces with the contact structure $\xi_{pl}$. First, using a construction of Stipsicz, Szab\'{o} and Wahl \cite{StipsiczSzaboWahl}, we will build the symplectic plumbing of spheres inside a closed symplectic manifold, such that the complement is also a symplectic plumbing of spheres, now with concave boundary. The dually positive condition will allow us to ensure that the concave piece contains a sphere of self-intersection number $+1$. Then we cut out the original plumbing of spheres and glue in an arbitrary convex symplectic filling to form a closed symplectic manifold which still contains a sphere of self-intersection number $+1$ (analogous to Lisca's method to classify symplectic fillings of $(L(p,q), \xi_{std})$). A theorem of McDuff implies that this closed manifold is a symplectic blow-up of $\C P^2$ with its standard Kahler structure. To identify the topology of the unknown convex symplectic filling, it suffices to understand how the concave cap can symplectically embed into the blow-up of $\C P^2$, since the convex filling must be its complement. Using arguments of Lisca \cite{Lisca}, we obtain homological restrictions coming from the adjunction formula and intersection information of the spheres. Under certain conditions, we can show that the homology classes these spheres represent, uniquely determines a symplectic embedding of their neighborhood into a blow up of $\C P^2$. By deleting the possible embeddings of the concave cap from blow-ups of $\C P^2$, we obtain the diffeomorphism types of all possible convex symplectic fillings of the given dually positive Seifert fibered space with contact structure $\xi_{pl}$.

\subsection{The dual graph construction}
{
\label{dualgraph}
First we describe the dual graph construction of Stipsicz, Szab\'{o}, and Wahl \cite{StipsiczSzaboWahl}, which provides a symplectic embedding of the neighborhood of dually positive spheres into a blow-up of $\C P^2$ whose complement is the concave cap we need. 

The construction of the dual graph begins by looking at a sphere bundle over a sphere, which has a standard symplectic structure. Each fiber will have self intersection number $0$, and will intersect each section of the bundle in a single point. The zero and infinity sections have self-intersection numbers $n$ and $-n$ respectively. It is convenient to have a handlebody diagram for the sphere bundle $B_n$, in which the $0$-section, $d$ distinct fibers, and the $\infty$-section are all visible. Such a diagram is given by figure \ref{figRuledSurface}.
\begin{figure}
\begin{center}
\psfragscanon
\psfrag{1}{$\cup d$ 3-handles, 1 4-handle}
\psfrag{2}{0}
\psfrag{3}{0}
\psfrag{4}{0}
\psfrag{5}{$-n$}
\psfrag{6}{$n$}
\psfrag{7}{$d$}
\includegraphics[scale=.4]{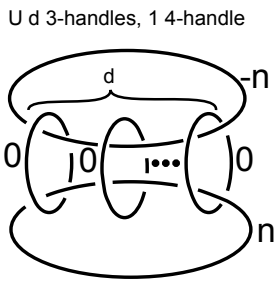}
\psfragscanoff
\end{center}
\caption{A handlebody diagram for the sphere bundle $B_n$. The relevant spheres (the zero section, infinity section, and $d$ fibers) are represented by the cores of the 2-handles together with the pushed in Seifert surfaces for the attaching circles.}
\label{figRuledSurface}
\end{figure}

To obtain the dual plumbing, we build the original plumbing inside a symplectic blow-up of $B_n$. We will allow blowups to be performed along the intersections of the various spheres in the picture. The proper transforms of these spheres will remain symplectic, and new exceptional spheres are symplectic submanifolds as well. At the beginning the spheres we keep track of are just the $0$-section, the $\infty$-section, and the $d$ fibers. As we blow-up, we include the new exceptional spheres in the picture. Figure \ref{figDualGraph} shows an example, keeping track of both the standard short-hand notation to keep track of these blowups, as well as the corresponding handlebody diagrams.
\begin{figure}
\begin{center}
\includegraphics[scale=.5]{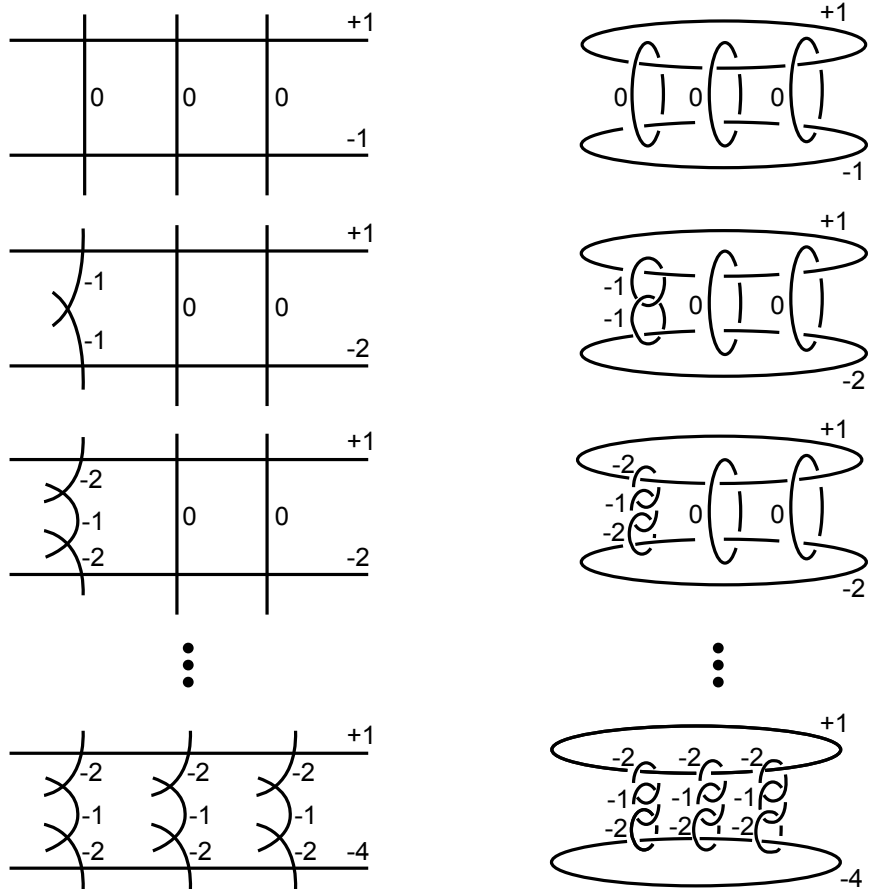}
\end{center}
\caption{A sequence of blow-ups, keeping track of the spheres whose neighborhoods are plumbings. In the last stage we can see the star-shaped plumbing with 4 vertices, central vertex labeled with $-4$, and the three arms are labelled with $-3$, and its dual plumbing which is also star-shaped with 4 vertices, but the central vertex is labeled with $+1$ and three arms labeled with $-2$. The two plumbings are glued together along equators of the regular fibers and equators of the $-1$ spheres on the exceptional fibers.}
\label{figDualGraph}
\end{figure}
Note that each exceptional fiber will contain at least one sphere of self-intersection $-1$, which is the exceptional sphere from the most recent blow-up of that fiber. We perform the blow-ups so that if we remove the vertices corresponding to these spheres in the resulting plumbing graph, we will get a two component graph, one of which is $\Gamma$ and the other is the dual graph $\Gamma'$.

To see that the plumbings coming from $\Gamma$ and $\Gamma'$ glue together to give the blow-up of our original sphere bundle, imagine cutting the blown-up sphere bundle as shown above, along an equator of each regular fiber, and along the equator of the last exceptional sphere in each exceptional fiber, so that these equators all match up smoothly to form a 3-manifold. Note that each sphere in either graph is a symplectic submanifold since it is either one of the distinguished sections, a fiber, an exceptional sphere, or the proper transform of one of these objects.

\subsubsection{Dually Positive Graphs}
{
\label{duallypositive}
Note that the conditions for a dually positive configuration of spheres, ensures that one can build a dual configuration which is a star-shaped graph whose central vertex has self-intersection number $+1$ as follows. Suppose the central vertex of the dually positive configuration has square $e_0\leq -k-1$ where $k$ is the number of arms in the original graph. Start with a sphere bundle with zero section of self-intersection number $-1$, and infinity section of self-intersection number $+1$, and keep track of $d=-e_0-1$ fibers (of self-intersection number 0). Blow up once at the intersections of each of the $-1-e_0$ fibers with the zero section. The proper transform of the zero section now has self-intersection number $e_0$, the exceptional spheres and proper transforms of the fibers have self-intersection number $-1$, and the infinity section is unchanged so it still has self-intersection number $+1$. Next, in $k$ of these singular fibers, blow-up at the intersection of the new exceptional sphere with the proper transforms of the original fiber. By continuing to blow up at points where an exceptional sphere of square $-1$ intersects an adjacent sphere, it is possible to build the dually positive graph emanating from the zero section and ending just before the most recent exceptional $-1$ spheres in each fiber, without ever blowing up at a point on the infinity section. Therefore the dual graph has central vertex with coefficient $+1$, and $-1-e_0$ arms, whose vertices have all negative coefficients.

Note that when $e_0<-k-1$, the dual graph contains $d=-e_0-1>k$ arms, but only $k$ of them are used to construct the singular fibers. Each of the remaining $-e_0-1-k$ arms shows up in the dual graphs construction as two $-1$ spheres, one of which is adjacent to the proper transform of the zero section and will be cut in half to split the graph from the dual graph. The remaining $-1$ sphere (the proper transform of a regular fiber after one blow-up at its intersection with the $0$-section) persists in the dual graph as a short arm.
}

}

\subsection{Cut and paste for symplectic manifolds}
{
\label{cutpaste}
The easiest way to ensure one can glue together two symplectic manifolds with orientation reversing diffeomorphic boundary, is by showing that one piece has convex boundary and the other has concave boundary. A concave or convex boundary inherits a contact structure, and a concave boundary will glue to a contactomorphic convex boundary to give a closed symplectic manifold (after possibly rescaling the symplectic form on one piece).  Therefore we would like to show a neighborhood of our dually positive configuration of spheres has convex boundary. In some cases, we will need to recognize the contact structure induced on the boundary in terms of an open book decomposition. For these results we use a theorem of Gay and Mark.

Their set up starts with a configuration of symplectic surfaces $\mathcal{C}=C_1\cup \cdots \cup C_n$ intersecting $\omega$-orthogonally according to a negative definite graph $\Gamma$ with no edges from a vertex to itself. For each vertex $v_j$, let $s_j$ be the sum of the valence of that vertex with the self-intersection number of the corresponding symplectic surface. Assume $s_j\leq 0$ for all vertices $v_j$ (a.k.a. no bad vertices). Let $\Sigma$ be the surface obtained from connect summing $|s_j|$ copies of $D^2$ to each $C_j$ and then connect summing these surfaces together according to the graph. Let $\{c_1,\cdots , c_k\}$ be simple closed curves, with one around each connected sum neck, and $\tau$ the product of right handed Dehn twists around $c_1,\cdots , c_k$.
 
 \begin{theorem}[Gay and Mark \cite{GayMark} Theorem 1.1] Any neighborhood of $\mathcal{C}$ contains a neighborhood $(Z,\eta)$ of $\mathcal{C}$ with strongly convex boundary, that admits a Lefschetz fibration $\pi: Z\to D^2$ having regular fiber $\Sigma$ and exactly one singular fiber $\Sigma_0=\pi\inv(0)$. The vanishing cycles are $c_1,\cdots , c_k$ and $\mathcal{C}$ is the union of the closed components of $\Sigma_0$. The induced contact structure $\xi$ on $\bdry Z$ is supported by the induced open book $(\Sigma,\tau)$.
 \label{thmplumbingconvex}
 \end{theorem}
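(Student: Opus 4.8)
The plan is to realize the required convex neighborhood as the total space of an explicitly constructed Lefschetz fibration over $D^2$, and then to feed that fibration into the standard machine that turns a Lefschetz fibration over the disk into a symplectic manifold with strongly convex boundary supported by the associated open book. First I would build the fibration abstractly from the combinatorial data: the fiber is the surface $\Sigma$ described in the statement, and the vanishing cycles are the curves $c_1,\dots,c_k$ encircling the connected-sum necks. Since these necks are disjoint, the $c_i$ are pairwise disjoint simple closed curves, so I can attach a Lefschetz (i.e.\ $-1$-framed relative to the fiber framing) $2$-handle along each $c_i$, placing all critical points over the origin to produce a single multi-nodal singular fiber $\Sigma_0$. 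Collapsing each $c_i$ splits off either a disk cap (for a neck coming from an attached $D^2$) or a neighboring piece (for a neck coming from an edge of $\Gamma$); the closed components that survive are exactly the surfaces $C_j$, meeting transversely at one node per edge of $\Gamma$. This identifies $\mathcal{C}$ with the union of closed components of $\Sigma_0$ and reproduces the intersection pattern prescribed by $\Gamma$.

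The first verification is topological: that $Z$ is diffeomorphic to the plumbing determined by $\Gamma$, with each $C_j$ carrying self-intersection equal to its prescribed coefficient. Here the hypothesis $s_j\le 0$ (no bad vertices) is essential, since it guarantees that the number $|s_j|$ of disk caps one must attach to $C_j$ is nonnegative. A framing count for the Lefschetz handles then shows that each of the $\mathrm{val}(v_j)+|s_j|$ necks incident to $C_j$ lowers its self-intersection by one, so that $C_j^2=-\mathrm{val}(v_j)-|s_j|$, which equals the decoration $e_j$ precisely because $s_j=\mathrm{val}(v_j)+e_j\le 0$ forces $|s_j|=-\mathrm{val}(v_j)-e_j$. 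Running the handle calculus through, I would check that the resulting handlebody is exactly the plumbing on $\Gamma$, so that $Z$ is genuinely a neighborhood of $\mathcal{C}$.

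The symplectic heart of the argument is the second step. Using the standard construction that endows a Lefschetz fibration over the disk with a symplectic form making the regular fibers $\omega$-positive and the boundary strongly convex, I would produce a form $\eta$ on $Z$ for which $\bdry Z$ is strongly convex and the induced contact structure is supported by the open book with page $\Sigma$ and monodromy $\tau$, the product of right-handed Dehn twists about the $c_i$. This must be arranged so that the closed components $C_j$ of $\Sigma_0$ are symplectic submanifolds meeting $\omega$-orthogonally according to $\Gamma$. Since the symplectic germ near such a configuration is determined up to deformation by the self-intersection and intersection data (the symplectic neighborhood theorem for configurations of surfaces), this standard model agrees, after rescaling, with the restriction of the ambient form near $\mathcal{C}$. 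That is what lets me assert that \emph{any} prescribed neighborhood of $\mathcal{C}$ contains a copy of $(Z,\eta)$: one shrinks the standard convex model into the given neighborhood using the uniqueness of symplectic collars.

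I expect the main obstacle to be exactly this symplectic-geometric matching: producing a single form that is simultaneously convex at the boundary, compatible with the Lefschetz fibration (so the open book conclusion holds), and equal near $\mathcal{C}$ to the standard configuration form (so the model embeds in an arbitrary neighborhood). The purely combinatorial self-intersection bookkeeping and the handle identification are routine once the role of the $|s_j|$ caps is understood; the delicate point is controlling the contact structure on $\bdry Z$ and certifying that it is the open book $(\Sigma,\tau)$ rather than some stabilization or twist of it.
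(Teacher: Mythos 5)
This statement is not proved in the paper at all: it is Theorem 1.1 of Gay and Mark, quoted verbatim and used as a black box (the paper's ``proof'' is the citation to \cite{GayMark}), so the only meaningful comparison is with Gay--Mark's own argument. Your outline has the same overall shape as theirs --- an explicit model Lefschetz fibration over $D^2$ with fiber $\Sigma$ and the disjoint curves $c_1,\dots,c_k$ as vanishing cycles in a single fiber, a symplectic form making the fibration symplectic with strongly convex boundary, and a neighborhood-matching step to place the model inside an arbitrary neighborhood of $\mathcal{C}$ --- and your topological bookkeeping is correct: collapsing the necks does identify $\mathcal{C}$ with the closed components of $\Sigma_0$, and the count $C_j\cdot C_j=-\mathrm{val}(v_j)-|s_j|$, which recovers the prescribed square because $|s_j|=-s_j$, is exactly where the no-bad-vertices hypothesis enters.

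However, two of your analytic steps have genuine gaps. First, there is no ``standard machine'' producing a form on this fibration that is simultaneously fibration-compatible and strongly convex at the boundary: the Thurston--Gompf construction gives a symplectic form with symplectic fibers but no convexity conclusion, while the machinery that does give convex (indeed Stein) structures on Lefschetz fibrations with bounded fibers produces \emph{exact} forms --- and $\eta$ cannot be exact, since the closed surfaces $C_j\subset\Sigma_0$ must be symplectic, so $[\eta]$ pairs positively with each $[C_j]$. Constructing a convex, fibration-compatible form on a Lefschetz fibration whose singular fiber has closed components is precisely the content of Gay--Mark's theorem, so invoking it as standard makes the argument circular. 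Second, your matching step misstates the neighborhood theorem: the germ of a symplectic form near an $\omega$-orthogonal configuration is determined by the graph, the genera, the self-intersection numbers, \emph{and the areas} $\int_{C_j}\omega$, and a single global rescaling of $\eta$ cannot match $n$ independent areas. To repair this you must build the model with prescribed periods on the classes $[C_j]$ (which is possible, since these classes are unconstrained in $H^2(Z;\mathbb{R})$) and then invoke the neighborhood-uniqueness theorem for $\omega$-orthogonal configurations (as in Gay--Stipsicz). The same germ theorem, rather than ``uniqueness of symplectic collars,'' is also what yields convex copies of the model inside an arbitrary prescribed neighborhood: the Liouville vector field certifying convexity exists only on the complement of $\mathcal{C}$ (non-exactness again), so collar uniqueness near $\partial Z$ alone does not let you shrink $Z$ into a given neighborhood of $\mathcal{C}$.
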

 
For dually positive configurations of spheres, the hypothesis that $s_j\leq 0$ is satisfied and $\Sigma$ is a disk with holes.

Note that the fact that such neighborhoods of symplectic surfaces have convex neighborhoods was proven (under the weaker assumption that the plumbing graph be negative definite instead of the condition on the $s_j$) by Gay and Stipsicz in \cite{GayStipsicz2}. While this result is sufficient to allow us to make a gluing argument, we need the information about the open book decomposition to identify the contact structure in some of the more difficult cases (see section \ref{newexamples}).

Now we may choose an arbitrarily small convex neighborhood of the dually positive configuration of spheres to cut out, and then (after possibly rescaling the symplectic form) glue in any strong symplectic filling of the contact boundary, which has the canonical contact structure $\xi_{pl}$, to obtain a closed symplectic manifold containing a symplectic embedding of the concave cap.
}

\subsection{McDuff's classification for closed symplectic manifolds}
{
\label{McDuff}
As explained in section \ref{duallypositive}, our condition that the configuration of spheres be dually positive ensures that the concave cap coming from the dual graph construction contains a sphere of self-intersection number $+1$. This condition is useful due to the following classification theorem.

 \begin{theorem}[McDuff \cite{McDuff}]
 If $(V^4,C^2,\omega)$ is a minimal symplectic pair (namely $V\setminus C$ contains no exceptional curves), where $C$ is a rational curve with self-intersection $C\cdot C=p\geq 0$, then $(V,\omega)$ is symplectomorphic either to $\C P^2$ with its usual Kahler form or to a symplectic $S^2$ bundle over a compact surface $M$. Further, this symplectomorphism may be chosen so that it takes $C$ either to a complex line or quadric in $\C P^2$, or to a fiber of the $S^2$ bundle, or (if $M=S^2$) to a section of this bundle.
 \end{theorem}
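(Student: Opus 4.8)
The plan is to prove this through the theory of pseudoholomorphic curves, following Gromov and McDuff: the strategy is to foliate $V$ (allowing at worst finitely many nodal fibers) by embedded symplectic spheres homologous to $C$, and then to read off the diffeomorphism and symplectomorphism type of $V$ from the structure of this family. First I would choose an almost complex structure $J$ tamed by $\omega$ for which the rational curve $C$ is $J$-holomorphic; this is possible precisely because $C$ is a symplectic submanifold. Since $C$ has genus zero, the adjunction formula gives $c_1(V)\cdot[C]=[C]\cdot[C]+2=p+2$, so the moduli space $\mathcal{M}$ of unparametrized genus-zero $J$-holomorphic curves in the class $[C]$ has expected real dimension $2\big(c_1(V)\cdot[C]-1\big)=2p+2$. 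Because $V$ is four-dimensional and $c_1(V)\cdot[C]=p+2>0$, automatic transversality makes every embedded such curve regular, so $\mathcal{M}$ is a smooth manifold of this dimension near $C$.

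Next I would bring in positivity of intersections: any two distinct $J$-holomorphic curves in a four-manifold meet with strictly positive local multiplicities, and the adjunction inequality forces each $J$-holomorphic representative of $[C]$ to be embedded of genus zero. Hence any two members of $\mathcal{M}$ meet in exactly $p$ points. To globalize I would then invoke Gromov compactness, compactifying $\mathcal{M}$ by stable cusp curves. This is where the minimality hypothesis is essential: a nodal degeneration would split $[C]$ as a sum of classes carried by $J$-holomorphic spheres, and the self-intersection bookkeeping against $[C]\cdot[C]=p$ forces at least one component to be an exceptional sphere lying in $V\setminus C$, which the hypothesis excludes. Thus bubbling is tightly controlled, and in the square-zero case it is ruled out entirely.

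With compactness in hand I would split on the value of $p$. When $p=0$ distinct members of $\mathcal{M}$ are disjoint, exactly one passes through each point of $V$, and minimality forbids singular fibers; the leaf space $M:=\mathcal{M}$ is then a closed surface and the projection $V\to M$ is an honest symplectic $S^2$-bundle with $C$ a fiber. When $p\geq 1$ I would reduce to this case by blowing up points lying on $C$: this lowers $[C]\cdot[C]$ by one at each stage while creating exceptional spheres only along $C$, and a sphere of square zero then rules the blow-up. A sphere of square one produces a Hirzebruch structure whose blow-down is $\C P^2$, and tracking the self-intersection and the number of blow-ups through this reduction identifies the two possible minimal targets and the image of $C$ (a line or conic in $\C P^2$, or, when $M=S^2$, a section of the bundle transverse to the ruling). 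Finally I would upgrade the resulting smooth fibration to the asserted symplectomorphism, using that the symplectic form on $\C P^2$ and on ruled surfaces is determined up to symplectomorphism by its cohomology class, via a Moser argument along the foliation.

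The step I expect to be the main obstacle is the interplay between transversality and compactness. One needs $J$ generic enough that $\mathcal{M}$ is smooth and that no stray $J$-holomorphic exceptional sphere appears in $V\setminus C$, yet $J$ must keep $C$ holomorphic, so genericity may only be imposed in the complement of $C$. Organizing the degeneration analysis so that the only conceivable bubbles are precisely those excluded by minimality (or, in the $\C P^2$ case, exactly the familiar blow-up configurations), and then promoting the singular foliation first to a smooth $S^2$-bundle and finally to a symplectomorphism, is the technical heart of the argument.
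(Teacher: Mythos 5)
First, a point of orientation: the paper does not prove this statement at all; it is quoted as McDuff's theorem and used as a black box (the paper's own contribution in that section is only the corollary specializing to $p=1$). So the comparison is really between your sketch and McDuff's original argument, whose broad outline you have reproduced: pseudoholomorphic spheres in the class $[C]$, automatic transversality, positivity of intersections plus adjunction, Gromov compactness, and a reduction of the positive-square case to the square-zero case by blowing up points on $C$.

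However, two of your intermediate claims have genuine problems. The claim that for general $p$ a nodal degeneration ``forces at least one component to be an exceptional sphere lying in $V\setminus C$'' is false: take $V=\C P^2$ and $C$ a smooth conic, so $p=4$ and the pair is minimal; the class $2\ell$ has nodal holomorphic representatives consisting of two distinct lines, and neither component is an exceptional sphere. Bubbling genuinely occurs when $p\geq 1$ and pair-minimality does not exclude it, which is exactly why the positive-square case cannot be handled by the same compactness argument as the square-zero case. Second, and more seriously, your reduction of $p\geq 1$ to $p=0$ by blowing up $p$ points on $C$ does not preserve the hypothesis of the theorem: in the example above, blowing up four points of the conic produces $C'$ of square zero in $\C P^2\#4\overline{\C P^2}$, but the proper transform of the line through two of those points represents $\ell-e_i-e_j$, an exceptional sphere disjoint from $C'$, so the blown-up pair is \emph{not} minimal, and your $p=0$ result (whose proof used pair-minimality to kill all singular fibers) cannot be invoked for it. Your clause ``creating exceptional spheres only along $C$'' is precisely where this hides: the spheres $e_i$ do meet $C'$, but new exceptional classes such as $\ell-e_i-e_j$ do not. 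What is actually needed, and what constitutes the real content of McDuff's paper, is a square-zero structure theorem \emph{without} the pair-minimality hypothesis: singular fibers are permitted, each is shown to decompose into components including exceptional spheres, the conclusion is that the ambient manifold is a blow-up of a rational or ruled surface, and minimality is reinstated only after blowing back down. As organized, the two halves of your argument do not compose.
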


In our case, this classification simplifies to a single manifold up to blowing up symplectically.

\begin{cor}
If $(V^4,C^2,\omega)$ is a minimal symplectic pair where $C$ is a 2-sphere of self-intersection number $+1$ then $(V,C,\omega)$ is symplectomorphic to $(\C P^2,\C P^1,\omega_{std})$.
\end{cor}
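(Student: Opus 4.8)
The plan is to deduce the corollary from McDuff's theorem by eliminating every possibility in the classification except $(\C P^2, \C P^1, \omega_{std})$, using the specific value $C \cdot C = +1$. McDuff's theorem tells us that $(V,\omega)$ is symplectomorphic either to $\C P^2$ with its Kähler form, or to a symplectic $S^2$-bundle over a compact surface $M$, and moreover the symplectomorphism can be arranged to carry $C$ to one of a short list of standard curves: a line or quadric in $\C P^2$, a fiber of the bundle, or (when $M = S^2$) a section. So the strategy is simply to compute the self-intersection number of each candidate image curve and keep only those equal to $+1$.

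First I would rule out the bundle cases by a self-intersection count. A fiber $F$ of any $S^2$-bundle satisfies $F \cdot F = 0 \neq 1$, so $C$ cannot be taken to a fiber. When $M = S^2$ the bundle is one of the Hirzebruch-type surfaces $B_n$ described in section \ref{dualgraph}, whose sections have self-intersection number $n$ or $-n$ (the zero and infinity sections), and more generally a section has even parity constraints incompatible with an odd framing in the relevant cases; in any event the image of $C$ must match $C \cdot C = +1$, and I would check that no section of a symplectic $S^2$-bundle in McDuff's list meets a $+1$ self-intersection sphere in the required way. The cleaner route is to observe that $C \cdot C = 1$ is odd, while the intersection form of any $S^2$-bundle over a surface is even when the bundle is trivial and otherwise has a standard form in which no class of square exactly $+1$ can represent $C$ under the allowed identifications; thus the bundle alternative is excluded and $(V,\omega) \cong (\C P^2, \omega_{std})$.

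Having pinned $V$ down to $\C P^2$, it remains to identify $C$ with $\C P^1$. McDuff's theorem says $C$ is carried to a line or a quadric. A line has self-intersection $+1$ and a quadric has self-intersection $+4$, so the constraint $C \cdot C = +1$ forces the image to be a line, i.e.\ $\C P^1$. Since the symplectomorphism furnished by McDuff's theorem already carries $C$ to this complex line, the pair $(V,C,\omega)$ is symplectomorphic to $(\C P^2, \C P^1, \omega_{std})$, as claimed.

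The only genuinely delicate point will be the bundle case over a general base $M$: McDuff's statement allows $M$ to be an arbitrary compact surface and only singles out sections when $M = S^2$, so I must make sure the self-intersection obstruction genuinely excludes every such bundle rather than merely the $M = S^2$ subcase. I expect this to reduce to the parity/self-intersection observation above—the image of a rational curve $C$ with $C\cdot C$ odd cannot be a fiber (square $0$) and in an $S^2$-bundle a rational sphere of positive odd square simply does not arise among the allowed images unless the total space is $\C P^2$—so the main obstacle is bookkeeping rather than a new idea.
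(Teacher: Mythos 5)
Your overall strategy---running through McDuff's list and keeping only the candidates whose self-intersection matches $C\cdot C=+1$---is the natural one, and your handling of the line/quadric dichotomy and of the fiber case is correct. But the argument fails at exactly the point you flagged as delicate. The claim that in a symplectic $S^2$-bundle ``no class of square exactly $+1$ can represent $C$ under the allowed identifications'' is false: the nontrivial $S^2$-bundle over $S^2$ is the Hirzebruch surface $\C P^2\#\overline{\C P^2}$, whose intersection form is $\langle 1\rangle\oplus\langle -1\rangle$ (odd, not even---your parity claim is backwards: it is the \emph{trivial} bundle $S^2\times S^2$ whose form is even), and it contains holomorphic, hence symplectic, sections of self-intersection $+1$. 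Indeed the paper's own sphere bundle $B_n$ from section~\ref{dualgraph} has sections of squares $\pm n$, so $n=1$ gives precisely such a $+1$ section, and this bundle is used in the dual graph construction. Since McDuff's theorem explicitly allows $C$ to be carried to a section when $M=S^2$, this case survives every intersection-theoretic count you propose: a symplectic sphere of square $+1$ genuinely sits inside this bundle as a section.

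What eliminates this case is the minimality hypothesis, which your proof never invokes---a structural red flag, since the corollary is simply false without it: the pair $(\C P^2\#\overline{\C P^2},\,\text{$+1$ section})$ satisfies every other hypothesis but is not symplectomorphic to $(\C P^2,\C P^1)$, as the total spaces are not even diffeomorphic. In the Hirzebruch surface the $+1$ section (class $\ell$) is disjoint from the $-1$ section (class $e$, with $\ell\cdot e=0$); the latter is an exceptional sphere, so $V\setminus C$ contains an exceptional curve and the pair is not minimal. Inserting this one observation---any $S^2$-bundle over $S^2$ admitting a section of square $+1$ carries a disjoint exceptional section, contradicting minimality of the pair---repairs the proof; the rest of your case analysis stands.
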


By gluing any strong symplectic filling of a dually positive Seifert fibered space with contact structure $\xi_{pl}$ to a neighborhood of its dual configuration, we obtain a symplectic pair $(V,C,\omega)$ satisfying all hypotheses of the theorem except minimality. After blowing down exceptional spheres in $V\setminus C$, it follows that such a convex filling embeds symplectically (up to rescaling the symplectic form) in a blow-up of $(\C P^2, \omega_{std})$ where the blow-ups are disjoint from the standard $\C P^1\subseteq \C P^2$. The complement of the embedded convex filling is symplectomorphic to the corresponding plumbing of spheres described by the dual graph, and $+1$ sphere corresponding to the central vertex of the dual graph is identified with $\C P^1$.
}

\subsection{Homological restrictions on embeddings of the cap}
{
\label{sec:homology}
Denote by $(X_M,\omega_M)$ the closed symplectic manifold $\C P^2\# M \overline{\C P^2}$ with symplectic form $\omega_M$ given by some symplectic blow up of the standard Kahler form on $\C P^2$. We would like to determine all possible symplectic embeddings of the positive dual graph plumbings corresponding to the concave cap. To understand possible embeddings, we use some homological restrictions.

First fix a standard orthogonal basis $(\ell, e_1,\cdots, e_M)$ for $H_2(X_M;\Z)$ where $\ell$ is represented by the complex projective line so $\ell^2=+1$, and the $e_i$ are represented by the exceptional spheres created in the blow-ups, so $e_i^2=-1$, and $\ell\cdot e_i=e_i\cdot e_j=0$ for $i\neq j$. Because these are represented by symplectic surfaces, we can determine how the first Chern class of $X_M$ evaluates on each of these homology classes via the adjunction formula:
$$\langle c_1(X_M),\ell\rangle = \ell^2+2=3$$
$$\langle c_1(X_M),e_i\rangle = e_i^2+2 = 1$$

Now we would like to analyze what the spheres in the plumbing for the cap could represent in $H_2(X_M;\Z)$ in terms of this basis. We will refer to the embedded sphere representing the central vertex as $C_0$, the symplectic spheres representing the vertices adjacent to the central vertex as $C_1,\cdots, C_d$, and the symplectic spheres representing the other vertices in the dual graph by $C_{d+1}, \cdots, C_m$. We know that these spheres are also symplectic, so they must also satisfy the adjuction formula:
$$\langle c_1(X_M),[C_j]\rangle = [C_j]^2+2$$
Furthermore, we know that the sphere $C_0$ which has self-intersection number $+1$, is sent to the complex projective line so $[C_0]=\ell$. The intersection data implies spheres whose vertices are joined by an edge have homological intersection number $+1$, other distinct spheres have homological intersection number $0$, and the square of the homology class represented by each sphere is given by the decoration on the graph (which is negative for all but $C_0$).

Now suppose that for $j\in \{1,\cdots, m\}$ 
$$[C_j]=a_0^j\ell +\sum_{i=1}^M a_i^je_i$$
For $j\in \{1,\cdots, d\}$ we have $1=[C_j]\cdot [C_0]=[C_j]\cdot \ell$, so $a_0^j=1$. Using the adjunction formula, and our knowledge of how $c_1(X_M)$ evaluates on the standard basis we get the following formula for the coefficients $a_i^j$:
$$3+\sum_{i=1}^M a_i^j = 1-\sum_{i=1}^M (a_i^j)^2 +2$$
so
$$\sum_{i=1}^M(a_i^j)^2+a_i^j = 0$$
Note that since $a_i^j$ are integers, we have $(a_i^j)^2+a_i^j\geq 0$ with equality if and only if $a_i^j\in \{0,-1\}$. Therefore $a_i^j\in\{0,-1\}$ for all $i\in \{1,\cdots, M\}$, $j\in \{1,\cdots, d\}$. Furthermore since $-n_j=[C_j]^2=1-\sum_{i=1}^M (a_i^j)^2$, there are precisely $n_j+1$ values of $i$ for which $a_i^j$ is $-1$. Thus we can write for $j\in\{1,\cdots, d\}$:
$$[C_j] = \ell - e_{i_1^j}-\cdots -e_{i_{n_j+1}^j}.$$
We have further data given by the fact that $[C_j]\cdot [C_{j'}]=0$ when $j\neq j'\in \{1,\cdots , d\}$:
$$0=(\ell - e_{i_1^j}-\cdots -e_{i_{n_j+1}^j})\cdot (\ell - e_{i_1^{j'}}-\cdots -e_{i_{n_{j'}+1}^{j'}})=1-|\{i_1^j, \cdots , i_{n_j+1}^j\} \cap \{ i_1^{j'}, \cdots, i_{n_{j'}+1}^{j'}\}|$$
We conclude
\begin{lemma} If $C_j$ and $C_j'$ are distinct symplectic spheres in the dual graph configuration which are adjacent to the central vertex sphere, then $[C_j]$ and $[C_{j'}]$ share exactly one common basis element $e_i$ with coefficient $-1$. \label{lem:homologyint} \end{lemma}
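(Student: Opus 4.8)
The plan is to obtain the conclusion directly from the explicit homological expressions for $[C_j]$ and $[C_{j'}]$ already established above, by pairing them against each other and using the one remaining piece of intersection data that has not yet been exploited: the vanishing of the intersection form between distinct spheres adjacent to the central vertex.

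First I would justify that $[C_j]\cdot[C_{j'}]=0$. Since both $C_j$ and $C_{j'}$ are adjacent to the central vertex $C_0$ in a star-shaped graph, they are the initial vertices of two \emph{different} arms and are therefore not joined by an edge; by the intersection data of the dual configuration, distinct non-adjacent spheres have homological intersection number $0$. With this in hand, I would substitute the normal forms $[C_j]=\ell - e_{i_1^j}-\cdots - e_{i_{n_j+1}^j}$ and $[C_{j'}]=\ell - e_{i_1^{j'}}-\cdots - e_{i_{n_{j'}+1}^{j'}}$ into the pairing.

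The computation then uses only the orthogonality relations $\ell\cdot\ell=1$, $\ell\cdot e_i=0$, and $e_i\cdot e_{i'}=-\delta_{ii'}$ of the standard basis. The $\ell\cdot\ell$ term contributes $+1$, all cross terms $\ell\cdot e_i$ vanish, and each index common to the two classes contributes $-1$ through $e_i\cdot e_i=-1$, so that
$$0=[C_j]\cdot[C_{j'}]=1-\bigl|\{i_1^j,\cdots,i_{n_j+1}^j\}\cap\{i_1^{j'},\cdots,i_{n_{j'}+1}^{j'}\}\bigr|.$$
Rearranging shows that the two index sets meet in exactly one element, which is precisely the claim.

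Because the homological form of each $[C_j]$ was already pinned down, I do not expect any genuine obstacle; the only point needing a moment of care is that within a single class each basis vector $e_i$ occurs at most once, with coefficient in $\{0,-1\}$ as determined above, so that a shared \emph{index} corresponds to a single shared \emph{basis element} carrying coefficient $-1$. Thus ``exactly one common index'' is the same statement as ``exactly one shared basis vector,'' and the lemma follows.
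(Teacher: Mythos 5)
Your proposal is correct and follows essentially the same route as the paper: the paper also pairs the normal forms $[C_j]=\ell-e_{i_1^j}-\cdots-e_{i_{n_j+1}^j}$ against each other using $[C_j]\cdot[C_{j'}]=0$ and the standard-basis orthogonality to get $0=1-|\{i_1^j,\cdots,i_{n_j+1}^j\}\cap\{i_1^{j'},\cdots,i_{n_{j'}+1}^{j'}\}|$, from which the lemma is immediate. Your added remark identifying shared indices with shared basis elements of coefficient $-1$ is a harmless clarification of what the paper leaves implicit.
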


If the graph we are considering has $k$ arms, corresponding to the $k$ singular fibers in the Seifert fibered space, then the coefficient on the central vertex of the graph, $e_0$ determines the relationship between $k$ and $d$. As discussed in section \ref{duallypositive}, the number of arms in the dual graph is $d=-e_0-1$, and the dually positive assumption implies $k\leq -e_0-1$. When $d=-e_0-1$ is strictly larger than $k$, there are $d-k$ additional short arms each made up of a single symplectic sphere of self-intersection number $-1$. Therefore $[C_j]=\ell-e_{i_1^j}-e_{i_2^j}$ for $j\in \{k+1,\cdots , d\}$.

\begin{lemma}
If $d=-e_0-1>k$ then the symplectic spheres $C_0,C_1,\cdots , C_d$ represent one of the following sets of homology classes in terms of the standard basis for $H_2(\C P^2\# M\overline{\C P^2})$ (up to relabelling).
$$\begin{array}{rcl|rcl}
\left[C_0\right] &=& \ell & \left[C_0\right] &=& \ell \\
\left[C_1\right] &=& \ell-e_1-e_\cdot-\cdots -e_\cdot & \left[C_1\right] &=& \ell-e_2-\cdots - e_k-e_{k+1}-\cdots -e_d -e_{\cdot}-\cdots -e_{\cdot}\\
\left[C_2\right] &=& \ell-e_1-e_\cdot-\cdots -e_\cdot & \left[C_2\right] &=& \ell-e_1-e_2-e_\cdot-\cdots -e_\cdot \\
&\vdots & & \vdots & \\
\left[C_k\right] &=& \ell-e_1-e_\cdot-\cdots -e_\cdot & \left[C_k\right] &=& \ell-e_1-e_k-e_\cdot-\cdots -e_\cdot \\
\left[C_{k+1}\right] &=& \ell-e_1-e_{k+1} & \left[C_{k+1}\right] &=& \ell-e_1-e_{k+1} \\
& \vdots & & \vdots & \\
\left[C_d\right] &=& \ell-e_1-e_d & \left[C_d\right] &=& \ell-e_1-e_d \\
\end{array}$$
Here $e_\cdot$ indicates that there can be additional distinct $e_i$'s with coefficient $-1$ in these homology classes if the corresponding square is sufficiently negative. They should all be distinct from each other and distinct from all labelled $e_i$'s. When $d=k+1$ there are additional possibilities given as follows where $1<j<k$ (by a symmetry we may actually assume $j\leq k/2$).
$$\begin{array}{rcl}
\left[C_0\right] &=& \ell\\
\left[C_1\right] &=& \ell-e_1-e_{i(1,j+1)}-e_{i(1,j+2)}\cdots -e_{i(1,k)}-e_\cdot -\cdots -e_\cdot \\
&\vdots & \\
\left[C_j\right] &=& \ell-e_1-e_{i(j,j+1)}-e_{i(j,j+2)}\cdots - e_{i(j,k)}-e_\cdot -\cdots -e_\cdot\\
\left[C_{j+1}\right] &=& \ell-e_2-e_{i(1,j+1)}-e_{i(2,j+1)}-\cdots -e_{i(j,j+1)}-e_\cdot-\cdots -e_\cdot \\
&\vdots & \\
\left[C_k\right] &=& \ell-e_2-e_{i(1,k)}-e_{i(2,k)}-\cdots -e_{i(j,k)}-e_\cdot -\cdots -e_\cdot \\
\left[C_{k+1}\right] &=& \ell-e_1-e_2\\
\end{array}$$
Here $i(a,b)$ are distinct for distinct pairs $(a,b)$, and are distinct from $1,2$.
\label{lem:e0neg}
\end{lemma}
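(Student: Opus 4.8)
The plan is to recast the statement as a purely combinatorial classification of set systems. From the discussion preceding the lemma, every sphere adjacent to the central vertex has $[C_j] = \ell - \sum_{i \in S_j} e_i$ for a set $S_j \subseteq \{1,\dots,M\}$ with $|S_j| = n_j+1 \ge 2$; the short arms $j \in \{k+1,\dots,d\}$ satisfy $|S_j| = 2$; and Lemma \ref{lem:homologyint} gives $|S_j \cap S_{j'}| = 1$ for all $j \ne j'$. So the whole statement reduces to classifying, up to relabeling the index set and permuting arms within their type, all families $S_1,\dots,S_d$ with these three properties. I would record this reduction first, and note once and for all that any index of an $S_j$ not pinned down by the shared structure is a private petal (the symbols $e_\cdot$), forced to be distinct from every other index precisely because all pairwise intersections already have size one.

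The first step is to understand the short arms. The sets $S_{k+1},\dots,S_d$ are $2$-element sets pairwise meeting in one point, hence form either a star (a common element) or a triangle. The triangle is incompatible with any long arm: if $S = S_i$ met each of $\{e_a,e_b\},\{e_b,e_c\},\{e_a,e_c\}$ in one point, then summing the three intersection sizes counts each of $e_a,e_b,e_c \in S$ exactly twice, forcing $3 = 2\,|S \cap \{e_a,e_b,e_c\}|$, an impossibility. Since $k \ge 3$ there is always a long arm, so whenever $d-k \ge 2$ the short arms share a common element, which I relabel $e_1$, relabeling their second elements $e_{k+1},\dots,e_d$ to get $[C_j] = \ell - e_1 - e_j$ for $j > k$, as in both columns of the first family.

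With the short-arm core $e_1$ fixed and $d-k \ge 2$, I test each long arm $S_i$ against the short arms. Meeting every $\{e_1,e_j\}$ in one point forces each long arm either to contain $e_1$ and avoid all petals $e_{k+1},\dots,e_d$, or to avoid $e_1$ and contain all of them. Two long arms of the latter type would share all $d-k \ge 2$ petals, contradicting $|S_i \cap S_{i'}| = 1$; hence at most one long arm avoids $e_1$. If none does, all of $C_0,\dots,C_d$ contain $e_1$ and the pairwise condition makes $e_1$ their only common index, which is Column 1. If exactly one long arm, say $C_1$, avoids $e_1$, then $C_1$ meets each remaining long arm in a single index different from $e_1$; relabeling these as $e_2,\dots,e_k$ gives Column 2. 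The accounting of the private petals $e_\cdot$ is routine.

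The remaining case $d = k+1$ has a single short arm $C_{k+1} = \ell - e_1 - e_2$, and here the previous dichotomy fails because one petal cannot separate the long arms. Instead, meeting $\{e_1,e_2\}$ in one point partitions the long arms into those containing $e_1$ (size $j$) and those containing $e_2$ (size $k-j$); within each group the pairwise condition makes the core the unique shared index, while two arms from opposite groups meet in a fresh index $e_{i(a,b)}$, the distinctness of which follows from the usual observation that a coincidence would produce a forbidden size-two intersection inside one group. This is exactly the third family. Finally I would check that $j = 0,k$ collapse to Column 1 and $j = 1,k-1$ collapse to Column 2 after relabeling the large sunflower core, so the genuinely new configurations have $1 < j < k$, with the $e_1 \leftrightarrow e_2$ symmetry allowing $j \le k/2$. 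I expect the main obstacle to be organizing this case analysis cleanly and verifying that the degenerate values of $j$ truly reproduce Columns 1 and 2 rather than new families: the individual set-theoretic arguments are short, but the reductions among the three families require the most care.
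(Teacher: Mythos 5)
Your proposal is correct and follows essentially the same route as the paper's proof: establish a common element $e_1$ for the short arms (ruling out the triangle configuration via the existence of a long arm, which the paper does by the same obstruction your parity count makes explicit), then show each long arm either contains $e_1$ or contains all the petals $e_{k+1},\cdots,e_d$ with at most one arm of the latter type, and finally handle $d=k+1$ by the bipartition into $e_1$-arms and $e_2$-arms with pairwise fresh indices $e_{i(a,b)}$ whose distinctness follows from the forbidden size-two intersections. Your only additions are cosmetic: the explicit set-system recasting, and the check that degenerate values of $j$ collapse into the first two columns, which the paper leaves implicit in ``up to relabelling.''
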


\begin{proof}
First notice that there must be a common element $e_i$ with coefficient $-1$ in all the classes $[C_{k+1}],\cdots, [C_d]$. This is trivial in the case that $k+1=d$, and follows immediately from lemma \ref{lem:homologyint} when $k+2=d$. When $d>k+2$, lemma \ref{lem:homologyint} implies that $[C_{k+1}]$ and $[C_{k+2}]$ share a unique element with coefficient $-1$ so without loss of generality $[C_{k+1}]=\ell-e_1-e_2$ and $[C_{k+2}]=\ell-e_1-e_3$. If $[C_{k+3}]$ does not have $-1$ coefficient for $e_1$ then lemma \ref{lem:homologyint} implies $[C_{k+3}]=\ell-e_2-e_3$, but then there is no way that $[C_1]$ can have $e_i$'s with $-1$ coefficient for exactly one of $\{1,2\}$, exactly one of $\{1,3\}$, and exactly one of $\{2,3\}$ which is a contradiction. Since $[C_j]$ and $[C_{j'}]$ share only one element $e_i$ with coefficient $-1$, we find that $[C_{k+1}]=\ell-e_1-e_{k+1}, \cdots , [C_d]=\ell-e_1-e_d$ for $e_{k+1},\cdots , e_d$ all distinct.

Now consider the homology classes $[C_1],\cdots, [C_k]$. Each such class must either have $-1$ coefficient for $e_1$ or $-1$ coefficient for all of the classes $e_{k+1},\cdots , e_d$ (not both). If $d>k+1$ then there can be at most one of the spheres $C_1,\cdots , C_k$, whose homology class has coefficient $-1$ for all the classes $e_{k+1},\cdots, e_d$ since no two $[C_j]$ can share more than one $e_i$ with coefficient $-1$. This proves the first part of the lemma.

When $d=k+1$ and $[C_{k+1}]=\ell -e_1-e_2$, some of the classes $[C_1],\cdots , [C_k]$ must have coefficient $-1$ for $e_1$ and the rest must have coefficient $-1$ for $e_2$. Without loss of generality we assume the first $j$ have $-1$ coefficient for $e_1$, and the rest have $-1$ coefficient for $e_2$. Then for each pair $(a,b)\in \{1,\cdots , j\}\times \{j+1,\cdots , k\}$ we must add another $e_{i(a,b)}$ which occurs with $-1$ coefficient in $[C_a]$ and $[C_b]$. If $i(a,b)=i(a',b')$ for $(a,b)\neq (a',b')$ then either $a\neq a'$ so $[C_a]$ and $[C_{a'}]$ both have coefficient $-1$ for both $e_1$ and $e_{i(a,b)}$ or $b\neq b'$ so $[C_b]$ and $[C_{b'}]$ both have coefficient $-1$ for $e_2$ and $e_{i(a,b)}$, but homology classes of distinct pairs of spheres can only share one common element with coefficient $-1$.
\end{proof}

For $j\in \{d+1,\cdots, m\}$, we know that $0=[C_j]\cdot [C_0] = [C_j]\cdot \ell$. Therefore $a_0^j=0$ for all $j\in \{d+1,\cdots, m\}$. In this case the adjuction formula yields the following formula:
$$\sum_{i=1}^M a_i^j = -\sum_{i=1}^M (a_i^j)^2 +2$$
so
$$\sum_{i=1}^M(a_i^j)^2+a_i^j = 2$$
Thus, all but one of the $a_i^j$'s is either $0$ or $1$, and exactly one $a_i^j$ is either $1$ or $-2$ for for each $j\in \{d+1,\cdots, m\}$. An inductive argument of Lisca \cite[Proposition 4.4]{Lisca} implies that $a_i^j$ can never be equal to $-2$, so there is always a unique $a_i^j$ equal to $1$. Note that Lisca's statement refers to linear graphs of symplectic spheres embedded in a blow-up of $\C P^2$, but each arm of the star-shaped graph (starting at the central vertex) is a linear graph satisfying the necessary hypotheses.

In conclusion the symplectic spheres in the dual graph represent homology classes of the following form.
\begin{eqnarray*}
\left[C_0\right]&=& \ell\\
\left[C_1\right]&=& \ell -e_{i^1_1}-\cdots -e_{i^1_{n_1+1}}\\
&\vdots &\\
\left[C_k\right]&=& \ell -e_{i^k_1}-\cdots -e_{i^k_{n_k+1}}\\
\left[C_{k+1}\right]&=& \ell-e_{i^{k+1}_1}-e_{i^{k+1}_2}\\
&\vdots & \\
\left[C_d\right]&=& \ell-e_{i^d_1}-e_{i^d_2}\\
\left[C_{d+1}\right]&=& e_{i^{k+1}_0} -e_{i^{k+1}_1}-\cdots -e_{i^{k+1}_{n_{k+1}-1}}\\
&\vdots &\\
\left[C_{m}\right]&=& e_{i^{m}_0} -e_{i^{m}_1}-\cdots -e_{i^{m}_{n_{m}-1}}\\
\end{eqnarray*}

Here $i_h^j\neq i_{h'}^j$ when $h\neq h'$, and the values $n_j$ are determined by the square of $[C_j]$. Up to relabeling the $e_i$, the only remaining question is when $i^j_h$ can coincide with $i^{j'}_{h'}$ for $j\neq j'$. Additional restrictions on $[C_1],\cdots, [C_d]$ are given by lemmas \ref{lem:homologyint} and \ref{lem:e0neg}. Other intersection restrictions imply other relations between the sets $\{i_0^j, \cdots , i_{n_j}^j\}$ for different values of $j\in \{1, \cdots , m\}$, which can be exploited in specific examples.
}

\subsection{Translating homological restrictions into embeddings}
{
Given a finite list of homology classes a symplectic embedding of the spheres of the dual graph into $\C P^2\#M\overline{\C P^2}$ can represent, we would like to say that there are finitely many symplectic embeddings of the dual graph up to isotopy. There are two main steps to this process. The first is to follow the arguments of Lisca \cite{Lisca} to carefully blow-down to $\C P^2$, while keeping track of how this affects the embedded spheres of the dual graph. The second step is to analyze this blown-down embedding, and to try to understand the isotopy classes of a regular neighborhood of the blown-down dual graph in $\C P^2$.

\subsubsection{Blowing down $(\C P^2\#M\overline{\C P^2},\text{dual graph})$}
{
\label{sec:blowingdown}
The following theorem was used by Lisca to solve this part of the problem when classifying symplectic fillings of lens spaces. In that case, the plumbing of spheres providing the concave cap is linear.

\begin{theorem}[Lisca \cite{Lisca} Theorem 4.2]
Let $\omega_M$ be a symplectic form on $\C P^2 \# M\overline{\C P^2}$ obtained from the standard Kahler form by symplectic blow-ups. Let $\Gamma=C_0\cup \cdots \cup C_j$ be a union of $\omega_M$-orthogonal symplectic spheres, in the configuration of a linear graph, with self-intersection numbers $(1,1-b_1,-b_2,\cdots, -b_j)$, such that $C_0$ is a complex line. Then there is a sequence of symplectic blow-downs
$$(\C P^2\# M \overline{\C P^2}, \omega_M)\to (\C P^2\# (M-1)\overline{\C P^2},\omega_{M-1})\to \cdots \to (\C P^2,\omega_0)$$
with $\omega_0$ diffeomorphic to the standard Kahler form and such that $\Gamma$ descends to two $\omega_M$ orthogonal symplectic spheres, each of self-intersection number $1$.
\end{theorem}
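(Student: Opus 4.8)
The plan is to argue by induction on the number $M$ of blow-ups, at each stage performing a single symplectic blow-down of a carefully chosen exceptional sphere and tracking how the linear configuration $\Gamma$ transforms. For the base case $M=0$, $X_0$ is a symplectic $\C P^2$; the theorem of Gromov and Taubes that every symplectic form on $\C P^2$ equals the standard K\"{a}hler form up to scale and diffeomorphism, together with the homological constraints that each $C_l$ is a symplectic sphere and $C_0=\ell$ is a line, forces the surviving configuration to be two $\omega$-orthogonal lines of square $+1$. All the content therefore lies in proving that whenever $M\geq 1$ there is a structure-preserving symplectic blow-down that lowers $M$ by one.

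For the existence of exceptional spheres I would use the theory of symplectic rational surfaces with $b^+=1$. By results of Taubes and of Li--Liu, a class $E$ with $E^2=-1$ and $\langle c_1(X_M),E\rangle=1$ is represented by an embedded $\omega_M$-symplectic $(-1)$-sphere exactly when $\omega_M\cdot E>0$; since $\omega_M$ is obtained by blowing up the standard K\"{a}hler form, this holds for each standard exceptional class $e_i$. The position of such a sphere relative to $\Gamma$ is then constrained by Gromov's positivity of intersections: two distinct irreducible symplectic (hence $J$-holomorphic, for a compatible $J$) curves meet non-negatively, each transverse intersection contributing $+1$. Thus a class $E$ can be realized by a symplectic sphere in good position with respect to all of the $C_l$ only if $E\cdot[C_l]\geq 0$ for every $l$; when this holds, each positive number $E\cdot[C_l]$ is realized by that many positive transverse intersections and each vanishing one by disjointness, after a symplectic isotopy. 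The homological numbers $E\cdot[C_l]$ are computed from the explicit classes found in section \ref{sec:homology}.

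The reduction proceeds from the far end of the chain and is arranged so that the two line-type spheres $C_0$ and $C_1$ are never blown down but are instead raised to square $+1$. At each stage I would single out a terminal exceptional generator: an $e_i$ occurring with coefficient $-1$ in the class of the last arm vertex $C_j$ but in no other $[C_l]$. Its intersection profile is $e_i\cdot[C_j]=1$ and $e_i\cdot[C_l]=0$ otherwise, so by the previous paragraph it is realized by a symplectic sphere meeting $\Gamma$ in a single $\omega$-orthogonal transverse point on $C_j$ and disjoint from the rest; blowing it down raises $C_j^2$ by one. Once $C_j$ has been reduced to a single exceptional class, blowing down $C_j$ itself deletes it and raises the square of its unique chain-neighbour $C_{j-1}$ by one, shortening the chain while preserving linearity and leaving $C_0$ untouched since $e_i\cdot\ell=0$. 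Iterating down the chain eventually removes $C_2,\dots,C_j$ and raises $C_1$, whose class is $\ell-\sum e_i$, to the line class $\ell$; after re-orthogonalizing via \cite{GompfSymplGluing}, the survivors are the two $\omega$-orthogonal $+1$-spheres $C_0$ and the descendant of $C_1$.

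The main obstacle is the bookkeeping that guarantees, at every intermediate stage, both that a terminal generator with the clean profile above exists and that its blow-down returns a configuration of exactly the prescribed form. The delicate points are to ensure that the generator available at the far end really occurs with coefficient $-1$ (so that its homological intersection with $C_j$ is $+1$ rather than negative, which would obstruct a clean blow-down), and to exclude exceptional classes that would meet two non-adjacent chain members or meet one member in more than one point. Here one must combine positivity of intersections, the $\omega$-orthogonality relations, and the fact that the classes $[C_l]$ share only the permitted exceptional generators, to show the available profile is forced to be the clean single-intersection one, so that linearity and the single transverse intersections persist under blow-down. This is exactly the inductive case analysis carried out by Lisca \cite[Proposition 4.4]{Lisca}, whose proof the present argument reuses; I note that it applies to each arm of the star-shaped dual graph separately, since every arm emanating from the central vertex $C_0$ is itself a linear chain of precisely the type covered by this theorem.
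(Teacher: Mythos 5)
Your high-level strategy (fix the configuration, repeatedly blow down exceptional spheres, use positivity of intersections to control their position) is the right family of ideas, but two of its load-bearing steps fail as written. First, the curve-existence mechanism is wrong. Taubes and Li--Liu produce an embedded symplectic $(-1)$-sphere in each class $e_i$, but that sphere is holomorphic for some almost complex structure of its own, not for the $J$ chosen so that all of $C_0,\dots,C_j$ are $J$-holomorphic. Positivity of intersections applies only to curves holomorphic for a \emph{common} $J$, and your parenthetical ``(hence $J$-holomorphic, for a compatible $J$)'' is exactly the gap: two symplectic spheres need not be simultaneously holomorphic for any single $J$ (they can even intersect negatively), so you cannot conclude that the Taubes sphere meets $\Gamma$ in $E\cdot[C_l]$ transverse positive points, nor that ``a symplectic isotopy'' puts it in good position or makes it disjoint from $\Gamma$ when $E\cdot[C_l]=0$. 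The argument the paper summarizes runs in the opposite order: fix $J$ adapted to $\Gamma$ first, then invoke Lisca's Lemma 4.5 (resting on McDuff's analysis of $J$-holomorphic curves), which says that as long as $M>0$ there exists a sphere $\Sigma$ holomorphic for \emph{that} $J$ with $[\Sigma]^2=-1$ and $[\Sigma]\cdot[C_0]=0$, and which also controls when $\Sigma$ can be taken disjoint from $\Gamma$. Nothing in your proposal replaces that lemma, and without it positivity of intersections is unavailable.

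Second, your reduction scheme requires at each stage a ``terminal exceptional generator'': an $e_i$ with coefficient $-1$ in the class of the last curve and appearing in no other $[C_l]$. The homological constraints of section \ref{sec:homology} do not guarantee this. For instance, the chain of self-intersections $(1,-2,-2,-2)$ with
$$[C_0]=\ell,\quad [C_1]=\ell-e_1-e_2-e_3,\quad [C_2]=e_3-e_4,\quad [C_3]=e_1-e_3$$
satisfies every adjunction and intersection constraint, yet both generators of the last curve already occur in $[C_1]$ and $[C_2]$, so your induction cannot begin. This configuration is realizable by complex curves: blow up a line in $\C P^2$ at two of its points, then at the intersection of its proper transform with the first exceptional sphere, then at a point of the newest exceptional sphere; the resulting chain has exactly these classes, and here the first useful sphere to blow down (class $e_4$) meets a \emph{middle} curve of the chain, not the last one. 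Lisca's actual proof does not order blow-downs from the far end: it blows down whichever $J$-holomorphic exceptional sphere Lemma 4.5 provides --- a sphere that may meet several of the $C_l$, or be one of them --- and a case analysis shows each such blow-down keeps the configuration linear while shortening it or raising a self-intersection number, after which one inducts. Citing \cite[Proposition 4.4]{Lisca} does not fill this hole: that proposition is the homological statement that no coefficient equals $-2$, not the blow-down case analysis, which is the real content of Lisca's Theorem 4.2 that your argument is missing.
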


In our situation, the dual graph is a star-shaped instead of linear. However, a star-shaped graph is simply the union of its $k$ arms, each of which is a linear graph emanating from the central vertex. Therefore Lisca's theorem applies to each of the arms of the star-shaped dual configuration. We would like to keep track of all of these arms at once as we blow-down the manifold. Though we do not need a new argument here, a summary of Lisca's proof is included here to make it clear how it applies in the star-shaped case. 

In our case, the dual configuration of spheres $\Gamma=C_0\cup \cdots \cup C_m$ (in the shape of a star-shaped graph), is assumed to be symplectically embedded in $(\C P^2\# M \overline{\C P^2},\omega_M)$. Here $C_0$ is identified with the complex projective line (by McDuff's theorem). First, choose an almost complex structure tamed by $\omega_M$, for which the spheres $C_0, \cdots , C_m$ are all $J$-holomorphic. This allows us to have more control over intersections of spheres with any $J$-holomorphic sphere we blow down. 

Relying on analysis of $J$-holomorphic curves by McDuff, Lisca proves a lemma (Lemma 4.5 in \cite{Lisca}), which says that as long as $M>0$, there exists a $J$-holomorphic sphere $\Sigma$ such that $[\Sigma]\cdot [C_0]=0$ and $[\Sigma]^2=-1$. Furthermore, we can find $\Sigma$ disjoint from $\Gamma$ if and only if there is a symplectic sphere $S$ of square $-1$ such that $[S]\cdot [C_j]=0$ for $j=0,1,\cdots, m$. Note that Lisca stated this in the case that $\Gamma$ is a linear plumbing, but the proof is unchanged for any configuration of symplectic spheres intersecting $\omega$-orthogonally. Therefore, it is possible to blow down $J$-holomorphic spheres $\Sigma$ until $X_M$ is blown down to $\C P^2$. Using the lemma, we can first blow down any $\Sigma$'s disjoint from $\Gamma$, until there exists $\Sigma$ for which $[\Sigma]\cdot [C_j]\neq 0$ for some $j$. Because $C_0,\cdots, C_m$ are $J$-holomorphic, $\Sigma$ must intersect them non-negatively. In our standard basis for $H_2(X_M;\Z)$ as in the previous section, the fact that $[\Sigma]\cdot [C_0]=0$ and $[\Sigma]^2=-1$ implies $[\Sigma]=\pm e_i$. Given the analysis in the previous section of how to write $[C_j]$ in terms of this standard basis, we find that $[\Sigma]\cdot [C_j]\in \{-1,0,1\}$ for every $j=0,\cdots, m$. Since $\Sigma$ and $C_j$ must intersect positively, either $\Sigma=C_j$ for some $j$, or $[\Sigma]\cdot [C_j]\in \{0,1\}$ for all $j=0,\cdots , m$. Lisca analyzes the relations between the $e_i$'s showing up with non-zero coefficients in different $[C_j]$'s within a linear plumbing, and proves that blowing down $\Sigma$ either reduces the length of the linear plumbing or reduces the absolute value of one of the self-intersection numbers of a $C_j$, ($j>0$), but the linear plumbing remains linear. This implies the conclusion of Lisca's theorem by induction.

In our case, we blow down a $J$-holomorphic sphere $\Sigma$, which may intersect any number of arms in the star-shaped graph. After blowing down, at least one arm is reduced in complexity, and each linear chain of symplectic spheres (originally these are the $k$ arms) remains linear. It is possible that two of these linear chains intersect after a blow-down, so the graph would no longer be star-shaped. Because the existence of a $J$-holomorphic sphere to blow down, and the homological properties of the $C_j$ do not depend on any assumptions about the non-intersection of the various arms, this does not prevent us from applying induction as in the linear case. We simply need to keep track of each linear chain separately, even as the chains intersect each other.

The conclusion is that each arm eventually descends to two symplectic spheres each of self-intersection number $1$, one of which is the original $C_0$ (since all blow-downs were done disjointly from $C_0$). Therefore in total we have $d+1$ symplectic spheres of self-intersection number $1$ inside $(\C P^2,\omega_0)$. Depending on choices of symplectic blow-ups determining $\omega_M$, the resulting symplectic form $\omega_0$ may no longer be the same as the standard Kahler form on $\C P^2$. However, if we only want to classify these symplectic fillings up to diffeomorphism, we may assume that $\omega_0=\omega_{std}$.
}
\subsubsection{Uniqueness of the pair after blowing down}
{
\label{uniqueembedding}
We need to analyze possible isotopy classes of a regular neighborhood of $d+1$ symplectic spheres in $(\C P^2,\omega_{std})$, each homologous to $\C P^1\subset \C P^2$. We also have that our original $J$ descends to an almost-complex structure $J_0$ on $\C P^2$, which is tamed by $\omega_0(=\omega_{std})$, and the remaining $d+1$ spheres in the reduction of the dual graph are $J_0$ holomorphic. We cannot assume that $J_0$ is the standard almost complex structure on $\C P^2$ since we had to choose $J$ originally so that each of the curves $C_0\cdots C_m$ were $J$-holomorphic. However, because $J_0$ is tamed by $\omega_{std}$, it is homotopic through almost complex structures tamed by $\omega_{std}$ to the standard almost complex structure $J_{std}$. This homotopy will allow us to isotope our symplectic spheres to complex projective lines in the following lemma. 

Working with curves which are $J$-holomorphic for some $J$ ensures that we need not worry about algebraically cancelling intersection points between curves since all intersections are positive. We want to control the way these $d+1$ spheres intersect, because an isotopy  of each of the $d+1$ spheres will extend to an isotopy of a regular neighborhood of their union only when the way these spheres intersect is preserved. Because all of these spheres must represent the homology class $\ell$ and they are all $J_0$-holomorphic, each pair of spheres must intersect at a single point. Because the blow-downs were disjoint from $C_0$ and each of the other $d$ spheres intersected $C_0$ in a different point initially, this remains true after blowing down. The other $d$ spheres may intersect each other at multi-points if before blowing down, a group of them intersected a common exceptional sphere. We will use the notation $\mathcal{I}^d_j$ to refer to a particular intersection configuration of $d+1$ spheres for which each pair intersects uniquely and positively. The intersection configuration $\mathcal{I}^d_j$ is specified by a diagram depicting $d+1$ (real) lines in the plane (each representing a sphere in $\C P^2$) as in figure \ref{fig:configurations}, where the multi-points of intersection are the relevant information.

First we would like to isotope our $J_0$ holomorphic spheres to complex projective lines, while keeping the intersection configuration fixed. A theorem of Gromov allows us to isotope each of the spheres to complex projective lines, but unfortunately we cannot always preserve the intersection configuration in sufficiently complicated configurations. However we are able to control this for the configurations given by figure \ref{fig:configurations}. These configurations will cover all of those we will obtain by blowing down dual graphs for dually positive Seifert fibered spaces with $3,4$ or $5$ singular fibers or with $k$ singular fibers and $e_0\leq -k-3$ (the more complicated configurations can arise when $e_0=-k-1$ or $-k-2$ and $k$ is large). 

The reason we have control over the configurations when $e_0\leq -k-3$ comes from the formula for the homology classes in lemma \ref{lem:e0neg}. After blowing down the exceptional spheres, the proper transforms of a subset of the spheres $C_0,C_1,\cdots , C_d$ will have a common intersection if they previously intersected the same exceptional sphere. Such intersections are determined by the homology classes (due to $J$-holomorphicity). If we also take into account the fact that $C_0$ intersects each other sphere in a distinct double point, then the only possible configurations to consider are $\mathcal{I}^d_1$ and $\mathcal{I}^d_2$ (figures \ref{fig:configuration3n1} and \ref{fig:configuration3n2}) containing $d+1\geq 4$ spheres. However, throughout our proof that we can isotope $J_0$ holomorphic spheres in such a configuration to complex projective lines in this configuration, we will also need to consider the intersection configuration $\mathcal{I}^d_3$ in figure \ref{fig:configuration3n3}.

We can also manage all the possible configurations of a small number of lines, which allows us to classify fillings of dually positive Seifert fibered spaces with $k=3,4$ or $5$ singular fibers. The possible intersection relations which can arise (keeping in mind that $C_0$ intersects each other sphere in a different point) are given by $\mathcal{I}_1^d$, $\mathcal{I}_2^d$, $\mathcal{I}_4^4$, $\mathcal{I}_5^5$, $\mathcal{I}_6^5$, and $\mathcal{I}_7^d$. We will additionally use $\mathcal{I}_3^d$,  $\mathcal{I}_8^d$, and $\mathcal{I}_{9}^d$ throughout the proof though they will not arise as initial intersection configurations. See figure \ref{fig:configurations}.

\begin{figure}
	\centering
	\subfloat[$\mathcal{I}_1^{d}$]{
	\label{fig:configuration3n1}	
	\includegraphics[scale=.5]{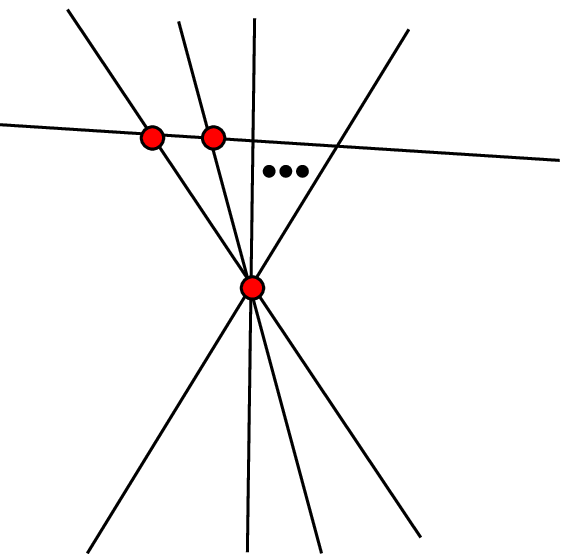}
	}
	\;\;\;\;\;\;\subfloat[$\mathcal{I}_2^{d}$]{
	\label{fig:configuration3n2}
	\includegraphics[scale=.5]{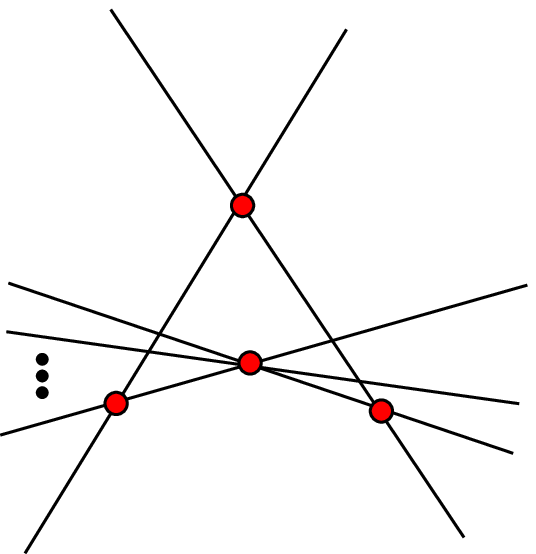}
	}
	\;\;\;\;\;\;\subfloat[$\mathcal{I}_3^{d}$]{
	\label{fig:configuration3n3}
	\includegraphics[scale=.5]{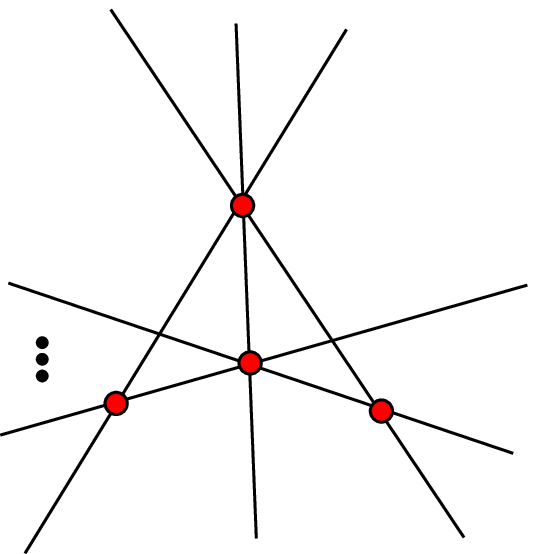}
	}\\
	\;\;\;\;\;\;\subfloat[$\mathcal{I}_4^4$]{
	\label{fig:configuration43}
	\includegraphics[scale=.5]{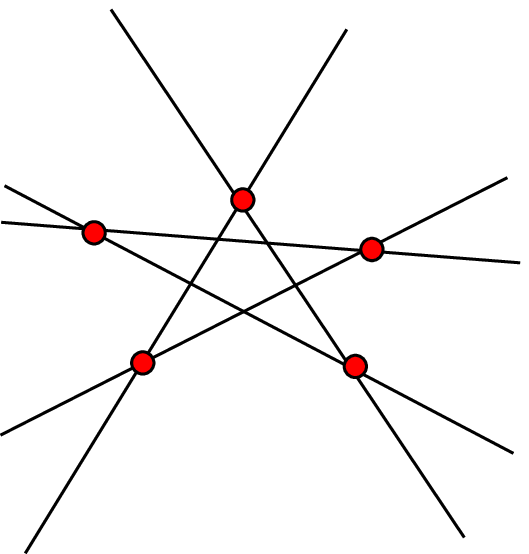}
	}
	\;\;\;\;\;\;\subfloat[$\mathcal{I}_5^5$]{
	\label{fig:configuration54}
	\includegraphics[scale=.5]{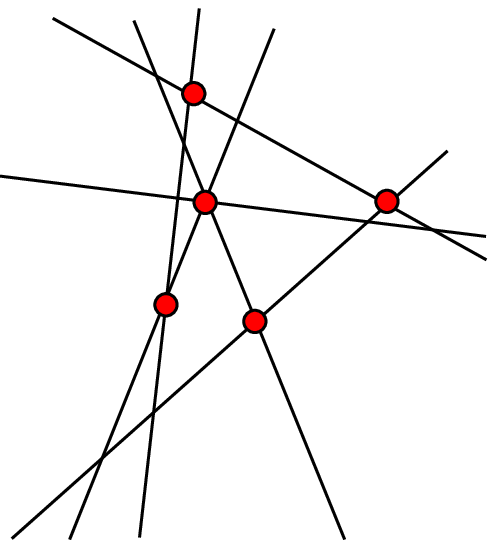}
	}
	\;\;\;\;\;\;\subfloat[$\mathcal{I}_6^5$]{
	\label{fig:configuration55}
	\includegraphics[scale=.5]{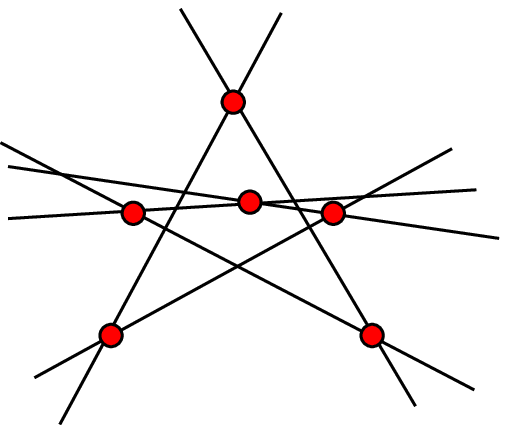}
	}\\
	\;\;\;\subfloat[$\mathcal{I}_7^d$]{
	\label{fig:configuration5n9}
	\includegraphics[scale=.5]{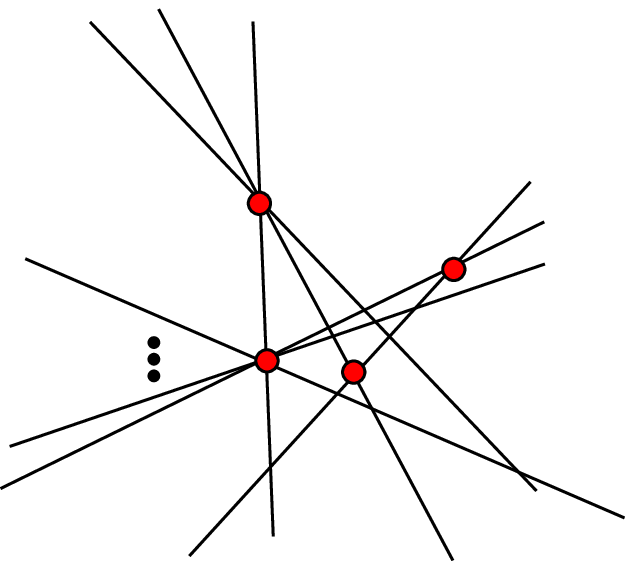}
	}	
	\;\;\;\;\;\;\subfloat[$\mathcal{I}_{8}^d$]{
	\label{fig:configuration5n10}
	\includegraphics[scale=.5]{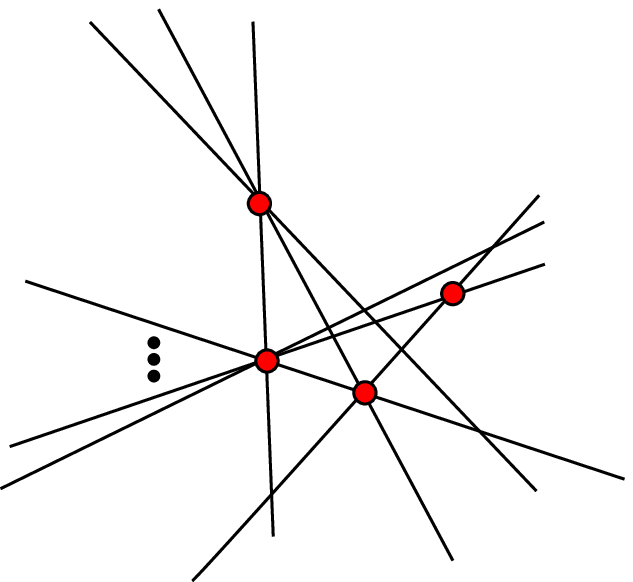}
	}
	\;\;\;\;\;\;\subfloat[$\mathcal{I}_{9}^d$]{
	\label{fig:configuration5n11}
	\includegraphics[scale=.5]{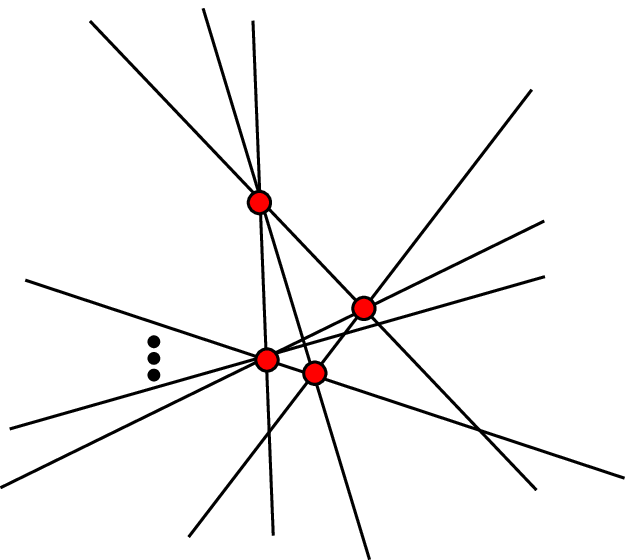}
	}
\caption{Each configuration $\mathcal{I}_j^d$ represents a union of $d+1$ spheres $C_0,C_1,\cdots, C_d$ for which each pair intersects at a single point. In our cases, $C_0$ will always intersect each of the other lines in a distinct double point. The diagrams represent how some of the other intersections can coincide at multi-points in the configurations we need to consider. We assume $d\geq 3$ for $\mathcal{I}_1^d$, $\mathcal{I}_2^d$, and $\mathcal{I}_3^d$ and that $d\geq 5$ for $\mathcal{I}_7^d$, $\mathcal{I}_{8}^d$, and $\mathcal{I}_{9}^d$.}
\label{fig:configurations}
\end{figure}

\begin{lemma}
Let $J_0$ be an almost complex structure on $\C P^2$ tamed by $\omega_{std}$. Suppose $C_0,C_1,\cdots C_d$ are $J_0$-holomorphic spheres homologous to $\C P^1\subset \C P^2$ such that $C_0,C_1,\cdots , C_d$ intersect according to one of the configurations given by one of $\mathcal{I}_1^{d}, \mathcal{I}_2^{d}, \mathcal{I}_4^4, \mathcal{I}_5^5$, $\mathcal{I}_6^5$, or $\mathcal{I}_7^d$ as given by figure \ref{fig:configurations}.  Then a regular neighborhood of $\cup_{i=0}^d C_i$ is isotopic to a regular neighborhood of complex projective lines in the same intersection configuration.
\label{lemma:4spherehtpy}
\end{lemma}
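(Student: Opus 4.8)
The plan is to carry the whole configuration $\bigcup_{i=0}^d C_i$ to a configuration of genuine complex projective lines by following $J_t$-holomorphic lines along a path $\{J_t\}_{t\in[0,1]}$ of $\omega_{std}$-tamed almost complex structures joining $J_0$ to $J_{std}$, while arranging that the combinatorial intersection pattern stays constant for every $t$. First I would record the structural facts that make the $J_t$-lines behave like honest lines. For any $\omega_{std}$-tamed $J$, every embedded $J$-holomorphic sphere in the class $\ell$ has $c_1\cdot\ell = 3 \geq 1$, so by automatic transversality the moduli space of such spheres is a smooth $4$-manifold diffeomorphic to $\C P^2$; two distinct $J$-lines meet in exactly one point (positivity of intersections together with $\ell\cdot\ell = 1$), and through any two distinct points of $\C P^2$ there passes a unique $J$-line (Gromov). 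As $t$ varies these moduli spaces fit into a smooth family, and the unique intersection point of two $J_t$-lines depends smoothly on $t$.

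The essential point is that I would not deform the spheres $C_i$ independently — that is precisely what can destroy a coincidence of intersection points — but instead deform them subject to the incidence data of the given configuration $\mathcal{I}_j^d$. I would encode each configuration by its nodes (the multi-points where several lines concur, together with the distinct double points cut out on $C_0$ by the remaining lines) and by the record of which lines pass through which nodes. Using uniqueness of the $J_t$-line through two points, I would solve for a family $\{C_i(t)\}$ of $J_t$-lines realizing the same incidences for all $t$: transport the nodes as smoothly varying points $q_\alpha(t)$ of $\C P^2$, and take each $C_i(t)$ to be the $J_t$-line through the nodes it is required to contain. At $t=0$ uniqueness recovers the given $C_i$, and at $t=1$ the $C_i(1)$ are $J_{std}$-holomorphic, hence genuine complex projective lines in the prescribed configuration.

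The main obstacle, and the reason the statement is confined to the configurations of figure \ref{fig:configurations}, is to show that these incidence conditions can be solved consistently for every $t$ without degeneration: no required concurrence may break and no spurious coincidence may appear as the nodes are transported. Maintaining a set of simultaneous $J_t$-collinearities among a small set of nodes can be over-determined — for genuine complex lines such systems are constrained by projective incidence phenomena of Pappus/Desargues type, which need not persist for pseudoholomorphic lines — so only configurations free of such hidden constraints can be controlled. The specific patterns $\mathcal{I}_1^d,\mathcal{I}_2^d,\mathcal{I}_4^4,\mathcal{I}_5^5,\mathcal{I}_6^5,\mathcal{I}_7^d$ are exactly those whose incidence data decouple: one can order the lines so that each successive line is pinned by nodes already fixed by earlier lines, and the auxiliary patterns $\mathcal{I}_3^d,\mathcal{I}_8^d,\mathcal{I}_9^d$ are the intermediate configurations this bookkeeping passes through. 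I expect this step to require a configuration-by-configuration verification, checking in each case that the transported nodes remain distinct and each prescribed pencil remains a genuine pencil throughout the homotopy.

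Finally, granting a smooth family $\bigcup_{i=0}^d C_i(t)$ of embedded spheres with constant combinatorial intersection type, I would conclude by an isotopy extension argument. The family is an isotopy of the union $\bigcup_i C_i$, viewed as a stratified subset of $\C P^2$ whose singular set consists of points at which finitely many real $2$-planes meet pairwise transversally; such an isotopy extends to an ambient isotopy of $\C P^2$. Transporting a regular neighborhood of $\bigcup_i C_i(0)$ along this ambient isotopy, and using uniqueness of regular neighborhoods of a fixed configuration up to isotopy, shows that a regular neighborhood of $\bigcup_{i=0}^d C_i$ is isotopic to a regular neighborhood of the complex projective lines $\bigcup_i C_i(1)$ in the same intersection configuration, which is the assertion of the lemma.
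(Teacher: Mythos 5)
Your setup — the path $\{J_t\}$ of $\omega_{std}$-tamed almost complex structures, Gromov's uniqueness of the $J$-line through two points, positivity of intersections, and the closing isotopy-extension step — is the same as the paper's. The gap is at your central step: you assert that for the listed configurations one can pin each line by incidence points ("nodes") in such a way that the combinatorial intersection pattern is automatically preserved for every $t$. This is exactly what fails, and the paper's proof is organized around that failure. A $J_t$-line can be pinned by at most two points, so in a configuration such as $\mathcal{I}_2^d$ you may pin the multi-point $P$ and, say, the nodes $a_2=C_1\cap C_2$ and $a_3=C_1\cap C_3$ on $C_1$; but nothing prevents the $J_t$-line through $a_2$ and $a_3$ from passing through $P$ at some time $t$ — equivalently, three pinned points may become $J_t$-collinear, forcing lines to coincide or the configuration to degenerate. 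This is a codimension-two event in the space of almost complex structures, and a prescribed one-parameter family $\{J_t\}$ need not avoid it; no ordering or "decoupling" of the pins rules it out, because the obstruction is a constraint on $J_t$, not on the combinatorics. Nor can you appeal to genericity of the path: the endpoint $J_{std}$ is fixed, and the complex lines through your transported nodes may themselves be degenerate. Indeed, table \ref{table:degenerations} of the paper records precisely which degenerations occur under the pinning scheme ($\mathcal{I}_2^d$ can degenerate to $\mathcal{I}_3^d$, $\mathcal{I}_4^4$ to $\mathcal{I}_2^4$ or $\mathcal{I}_3^4$, and so on); only $\mathcal{I}_1^d$, $\mathcal{I}_3^d$, and $\mathcal{I}_9^d$ are rigid, so your argument as written proves the lemma only for $\mathcal{I}_1^d$.

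What is missing is the repair mechanism, which is the bulk of the paper's proof. First, one observes that under the pinned isotopy the configuration can only become \emph{more} degenerate, and the possible degenerations form a finite poset, so after finitely many re-pinnings (re-choosing the fixed points to include the red dots of the new, more degenerate configuration) the isotopy stabilizes and ends at complex lines, possibly in the wrong (degenerate) configuration. Second, each degeneration is undone locally: at the first degeneration time $t_0$ one builds a two-parameter family of $J_t$-lines $C_{i_0}^{(t,s)}$ by sliding one pin of the offending line along a short path $\gamma(s)$ inside the neighboring curve $C_{i_1}^t$, which pulls that line off the multi-point for $s>0$; the original isotopy and the corrected one are then spliced together by a \emph{generic smooth, not $J$-holomorphic} isotopy supported near the degeneration point, kept close enough to holomorphic configurations that no tangencies or cancelling intersection pairs appear. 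One also needs the check that simultaneous degenerations involve distinct spheres $C_{i_0}$, so the repairs can be performed independently — this, not "decoupled incidence data," is the actual role of the hypothesis that no sphere in these configurations passes through more than two multi-points. Without these two ingredients, your statement that "the transported nodes remain distinct and each prescribed pencil remains a genuine pencil throughout the homotopy" is precisely the assertion that requires proof, and in this strong form it is false.
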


\begin{proof}
Because $J_0$ and $J_{std}$ are both tamed by $\omega_{std}$, there exists a family of almost complex structures $\{J_t\}$ on $\C P^2$ starting at $J_0$ and ending at $J_1=J_{std}$. A theorem of Gromov \cite{Gromov} states that for any $J$ tamed by the standard symplectic structure on $\C P^2$, any two points $v_1\neq v_2\in \C P^2$ lie on a unique non-singular rational (i.e. diffeomorphic to $S^2$) $J$-holomorphic curve homologous to $\C P^1\subset \C P^2$. Therefore for each $J_t$ in our homotopy, we can find a unique $J_t$-holomorphic sphere through two given points.

This provides an isotopy of each sphere $C_0,C_1,\cdots , C_d$ to a complex projective line (since these are the unique $J_{std}$-holomorphic spheres homologous to $\C P^1$). During this isotopy we can fix exactly two points on each sphere. This allows us to control the changes in the intersection configurations. At all times during this isotopy, each pair $C_i$, $C_j$ intersect positively at a unique point (since they will both be $J_t$-holomorphic). We will first analyze precisely what changes can can occur in the intersection configuration throughout this isotopy.

For the configuration $\mathcal{I}_1^{d}$, there are precisely two intersection points on each of $C_1,\cdots , C_d$, so for $i=1,\cdots, d$, we will choose the corresponding sphere $C_i^t$ to be the unique $J_t$-holomorphic sphere homologous to $\C P^1$ through those fixed points. Let $C_0^t$ be the unique $J_t$ holomorphic sphere homologous to $\C P^1$ through the points where $C_0^0=C_0$ intersects $C_1^0=C_1$ and where it intersects $C_2^0=C_2$. Then for all $0\leq t\leq 1$, $C_1^t,\cdots , C_d^t$ must continue to pass through a common point, and cannot intersect anywhere else. The intersections of $C_0^t$ with $C_1^t$ and $C_2^t$ are similarly fixed. While the intersections of $C_0^t$ with $C_i^t$ for $i\geq 3$ are not fixed points, these pairs must continue to intersect at double points since no two $C_i^t$ for $i=1,\cdots , d$ can intersect at any point away from their common $d$-fold intersection point, and $C_0$ cannot pass through this $d$-fold intersection because it intersects $C_1$ uniquely at a different point.

In general, if at a given intersection point, we choose to fix that point throughout the isotopy on every sphere $C_i^t$ for which $C_i^0$ passes through that point, then that intersection is preserved (though potentially other spheres may pass through that point during the isotopy if they are not otherwise constrained). In figure \ref{fig:configurations} we indicate such points by a red dot. We are allowed to fix these points as long as there are no more than two red dots on each sphere. Each of the configurations we are considering have the key property that no sphere passes through more than two multi-points, so we can place red dots on all the multi-points. This means that if the intersection configuration changes during the isotopy $\{\cup_{i=0}^d C_i^t\}_{t\in [0,1]}$, at worst it becomes a less generic intersection configuration.

We may analyze how double point intersections which are not fixed can move through the isotopy onto a third sphere, as in the argument above showing that the intersection configuration $\mathcal{I}_1^{d}$ is fixed under the isotopy through $J_t$-holomorphic spheres. The constraints we have are that each pair of spheres intersects at a unique point, and each sphere must pass through its two points we choose to keep fixed. These points include those indicated by a red dot in figure \ref{fig:configurations}, and the other fixed points do not provide particularly useful information except to provide the isotopy via Gromov's theorem. Analyzing this information, we obtain the information given by table \ref{table:degenerations}.

\begin{table}
$$\begin{array}{|l|c|c|c|c|}
\hline
\text{Original configuration} & \mathcal{I}_1^d & \mathcal{I}_2^d & \mathcal{I}_3^d & \mathcal{I}_4^4  \\ \hline
\text{Possible degenerations}  & \text{none} & \mathcal{I}_3^d & \text{none} & \mathcal{I}_2^4, \mathcal{I}_3^4  \\ \hline 
\end{array}
$$
$$
\begin{array}{|l|c|c|c|c|c|}
\hline
\text{Original configuration}  & \mathcal{I}_5^5 & \mathcal{I}_6^5 &\mathcal{I}_7^d & \mathcal{I}_{8}^d& \mathcal{I}_{9}^d \\\hline
\text{Possible degenerations} &  \mathcal{I}_2^5, \mathcal{I}_3^5, \mathcal{I}_7^5, \mathcal{I}_{8}^5, \mathcal{I}_{9}^5
&  \mathcal{I}_5^5, \mathcal{I}_7^5, \mathcal{I}_{8}^5, \mathcal{I}_{9}^5 & \mathcal{I}_{8}^d, \mathcal{I}_{9}^d & \mathcal{I}_{9}^d & \text{none} \\\hline
\end{array}
$$

\caption{Possible degenerations of each configuration that can occur during an isotopy through $J_t$ holomorphic spheres fixing the intersection points corresponding red dots in figure \ref{fig:configurations}.}
\label{table:degenerations}
\end{table}

Now suppose we have any of these starting intersection configurations and we have chosen two points on each sphere such that any sphere passing through a point corresponding to a red dot in figure \ref{fig:configurations} has that point chosen. These points together with the homotopy $\{J_t\}$ define an isotopy of the $d+1$ spheres which may change the intersection configuration as allowed by table \ref{table:degenerations}. If at some point in the isotopy the intersection configuration changes, redefine the isotopy from the time that the first degeneration occurs onwards by changing the choice of two points on each sphere so that they include the points corresponding to red dots in the new (more degenerate) configuration. Then repeat this again at a later time value if the intersection configuration changes again. Notice that after finitely many steps (at most three in these cases), the intersection configuration must remain fixed under the isotopy. Therefore we obtain an isotopy from our original configuration of $d+1$ $J_0$-holomorphic spheres to $d+1$ complex projective lines where the intersection configuration changes at most by finitely many degenerations.

Now we will show that we can modify an isotopy that degenerates once to an isotopy that preserves the intersection configuration and passes through $J_t$-holomorphic spheres for all but a small interval near the time where the degeneration originally occured.

Each of these possible degenerations occurs when a sphere $C_{i_0}^t$ which originally intersected other spheres $C_{i_1}^t, \cdots , C_{i_{\ell}}^t$ generically in double points for $t<t_0$ is isotoped so that it intersects $C_{i_1}^t, \cdots , C_{i_{\ell}}^t$ at their common intersection point ($\ell \geq 2$) for $t\geq t_0$, or when this occurs in finitely many different places at the same time $t_0$. 

Because the intersection points corresponding to red dots are fixed and distinct, $C_{i_0}\cap C_{i_j}$ for $j=1,\cdots , \ell$ cannot be points marked by a red dot. However, the intersection $C_{i_1}\cap \cdots \cap C_{i_{\ell}}$ can be (and usually will be) marked by a red dot, as the sphere $C_{i_0}^t$ approaches that fixed intersection point. Notice that in the intersection configurations we are considering, no sphere has more than three intersection points that are not marked by a red dot, except in $\mathcal{I}_1^d, \mathcal{I}_2^d, \mathcal{I}_3^d, \mathcal{I}_7^d, \mathcal{I}_{8}^d$ and $\mathcal{I}_{9}^d$. The configurations $\mathcal{I}_1^d, \mathcal{I}_3^d$, and $\mathcal{I}_{9}^d$ cannot degenerate at all, the configurations $\mathcal{I}_2^d$ and $\mathcal{I}_{8}^d$ can only degenerate at one place, and none of the spheres in $\mathcal{I}_7^d$ containing four intersections points not marked by a red dot can degenerate in a way that brings two pairs of these four points together. Therefore, if degenerations occur in multiple places at once in any of the configurations, the corresponding $C_{i_0}$'s will be distinct. Therefore we can perform all our modifications to the isotopy near degenerations in distinct places independently. 

After a degeneration at $t=t_0$, the isotopy is defined by fixing pairs of points on each sphere including all the red dots on the more degenerate intersection configuration. Therefore one of the two fixed points on $C_{i_0}^t$ is at its intersection with $C_{i_1}^t\cap \cdots \cap C_{i_{\ell}}^t$ for all $t\geq t_0$.

Choose an injective parametrization of the spheres $\cup_{i=0}^{d}C_i^t$ 
$$F:\sqcup_{i=0}^{d}S^2_i \times I \to \C P^2$$
so that $F(S_i^2, t) = C_i^t$. Let $p\in S^2_{i_1}$ be the unique preimage in $S_{i_1}^2$ of $v_1=C_{i_0}^{t}\cap C_{i_1}^{t} \cap \cdots \cap C_{i_{\ell}}^{t}\in \C P^2$ for $t\geq t_0$ (note one can choose the parameterization $F$ so that $p$ is the same for all $t\geq t_0$ since its image is fixed as $t\geq t_0$ varies). Let $v_0 \in \C P^2$ be the other point on $C_{i_0}^{t_0}$ which is fixed during the isotopy as $t\geq t_0$ varies. Choose a sufficiently small injective parametrized path $\gamma: I\to S^2_{i_1}$ with coordinate $s\in I$, such that $\gamma(0)=p$. We will extend our isotopy $F$ to a two parameter isotopy $G$ depending on $s$ and $t$ for all $t\geq t_0$ as follows. For $t\geq t_0$ and $s\in I$, let $G(S_{i_0}, t,s)=C_{i_0}^{(t,s)}$ be the unique $J_t$-holomorphic sphere homologous to $\C P^1$ that passes through $v_0$ and $F(\gamma(s),t)\in C_{i_1}$. Let $G(x, t,s)=F(x,t)$ for $x\in S_i^2$ where $i\neq i_0$. Then for $s=0$ and $t\geq t_0$, $C_{i_0}^{(t,0)}=C_{i_0}^t$. I claim that for $s>0$ and $t\geq t_0$, $C_{i_0}^{(t,s)}$ intersects each $C_{i_j}^{(t,s)}$ for $j=1,\cdots , \ell$ in a distinct double point so the degeneration is undone. This is because $C_{i_0}^{(t,s)}$ necessarily intersects $C_{i_1}^{(t,s)}$ at $F(\gamma(s),t)\neq F(p,t)=C_{i_1}^{(t,s)}\cap \cdots \cap C_{i_{\ell}}^{(t,s)}$ so $C_{i_0}^{(t,s)}$ cannot intersect $C_{i_j}^{(t,s)}$ at its intersection with $C_{i_{j'}}^{(t,s)}$ for $j,j'\in \{1,\cdots, \ell \}$. By choosing $\gamma$ to be a sufficiently small path the intersections of $C_{i_0}$ with other $C_i$ will be preserved (as generic double points except possibly at $v_0$ which is fixed). Additionally note that $C_{i_0}^{(t,s)}$ is distinct from $C_{i_0}^{(t,s')}$ for $s\neq s'$ since their intersections with $C_{i_1}^{(t,s)}$ are different by injectivity of $\gamma$. 

\begin{figure}
\centering
\psfragscanon
\psfrag{1}{$t$}
\psfrag{2}{$t_0$}
\psfrag{3}{$s$}
\includegraphics[scale=.8]{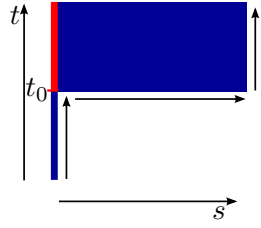}
\psfragscanoff
\caption{A schematic for the isotopies $F$ and $G$. The blue indicates where $F$ and $G$ map into the original less degenerate configuration, and the red indicates where $F$ and $G$ map into the more degenerate configuration.}
\label{fig:isotopy}
\end{figure}

Now we can define an isotopy of the $d+1$ spheres which preserves the original intersection configuration except at a single point during the isotopy where it degenerates by following the original isotopy until time $t_0$ then following the isotopy through $\cup C_i^{(t,s)}$ for $t=t_0$, $s\in [0,1]$, and then through $s=1$, $t\in [t_0,1]$. (See figure \ref{fig:isotopy} for a schematic.) We can modify this isotopy in a small neighborhood of the single degeneration point, by briefly moving out of the space of $J_t$-holomorphic curves. Consider the space of all smooth immersions of $d+1$ spheres, each homologous to $\C P^1$ with given intersection data. This space includes all the $J$-holomorphic configurations we have considered as a subspace. The main problem with working with smooth but not $J$-holomorphic spheres, is that a generic isotopy through $d+1$ smooth embeddings of such spheres passes through points where the number of geometric intersections between any two spheres can increase by adding an algebraically cancelling pair. However, if we stay sufficiently close to $J$-holomorphic configurations we can avoid this since algebraically cancelling pairs are introduced after a tangency between two spheres which is a closed condition. By construction we have two paths from the degenerate point in our isotopy out into the space of $d+1$ spheres intersecting in the configuration $\mathcal{I}$, we are interested in. Choosing a sufficiently small $\varepsilon$, we will find a path through embeddings of $d+1$ spheres homologous to $\C P^1$ intersecting according to $\mathcal{I}$, from $F(\sqcup_{i=0}^dS_i^2, t_0-\varepsilon)=\cup_{i=0}^d C_i^{(t_0-\varepsilon,0)}$ to $G(\sqcup_{i=0}^d S_i^2, t_0, \varepsilon)=\cup_{i=0}^d C_i^{(t_0,\varepsilon)}$. Let $N_1, N_2$ be small connected neighborhoods in $S_{i_0}^2$ so that $\overline{N_1}\subset N_2$ and the images $F(N_q,t)\subset C_{i_0}^t$ for $t\in [t_0-\varepsilon, t_0]$, $q=1,2$ contain the intersections of $C_{i_0}^{t}$ with $C_{i_j}^t$ for $j=1,\cdots, \ell$ but no other intersections, and similarly $G(N_q,t_0,s)\subset C_{i_0}^{(t,s)}$ contains the intersections of $C_{i_0}^{(t_0,s)}$ with $C_{i_j}^{(t_0,s)}$ for $j=1\cdots , \ell$, $s\in [0,\varepsilon]$ and no other intersections. We construct the isotopy $I: \sqcup_{i=0}^nS_i^2\times I  \to \C P^2$ as follows.
$$I(x,r)=\begin{cases} F(x,t_0-\varepsilon+2r\varepsilon) & \text{ for } x\notin N_1, r\in [0,\frac{1}{2}]\\
 G(x,t_0,2\varepsilon(r-\frac{1}{2})) & \text{ for } x\notin N_1, r\in [\frac{1}{2},1]\\
 H(x,r) & \text{ for } x\in N_2 \end{cases} $$
Here $H: N_2 \times I \to \C P^2$ is a generic isotopy everywhere close to, and for $r=0,1$ or $x\in N_2\setminus N_1$ agreeing with, the isotopy $\widehat{H}: N_2\times I \to \C P^2$ defined by 
$$\widehat{H}(x,r)=\begin{cases} F(x,t_0-\varepsilon+2r\varepsilon) & \text{ for } r\in [0,\frac{1}{2}]\\
 G(x,t_0,2\varepsilon(r-\frac{1}{2})) & \text{ for } r\in [\frac{1}{2},1]\end{cases}$$

Note that a generic ($1$-parameter) isotopy mapping $N_2$ (which is $2$-dimensional) into $\C P^2$ (which is $4$-dimensional) will avoid the ($0$-dimensional) point $v_1\in \C P^2$ where $C_{i_1}^{(t,s)}$ intersects $C_{i_2}^{(t,s)}, \cdots , C_{i_{\ell}}^{(t,s)}$ for all $(t,s)$ (identifying $C_i^t$ with $C_i^{(t,0)}$ for $t<t_0$). By taking $H$ generic and close to $\widehat{H}$ we can ensure that $N_2$ maps into a small neighborhood of $v_1$ (thus avoiding introducing new intersections with other spheres) for each $r$ and $H(N_2,r)$ uniquely intersects each $H(S_{i_j}^2,r)$ in a distinct double point. 

Therefore $I$ provides an isotopy we can append between $F(\cdot , t)$ for $t\in [0,t_0-\varepsilon]$ and $G(\cdot, t_0,s)$ for $s\in [\varepsilon,1]$ followed by $G(\cdot, t, 1)$ for $t\in [t_0,1]$. Putting these all together we find an isotopy of the spheres $C_0,\cdots , C_d$ which preserves the intersection configuration.

We conclude our argument by dealing with the possibility that multiple degenerations occur as we go through the homotopy $\{J_t\}$. If the intersection configuration changes at times $t_0,t_1,\cdots, t_n$ during the isotopy defined by $\{J_t\}$ then repeat the process above starting at the most degenerate configuration ($t\in [t_n,1]$). Use the process above to create an isotopy for $t\in [t_{n-1}, 1]$ that preserves the intersection configuration of the spheres that occurs at time $t_{n-1}$. Then repeat the process to obtain an isotopy over successively larger time intervals $[t_{n-1},1]\supset [t_{n-1},1] \supset \cdots \supset [t_0,1] \supset [0,1]$, which fix the intersection configuration that occurs at the initial point of the interval.
\end{proof}

This tells us that, up to isotopy of a regular neighborhood, we may assume that the blow-down of our dual graph is a set of complex projective lines in $\C P^2$. We would like to know that the intersection information of the complex projective lines determines the isotopy type of a regular neighborhood of their union. It suffices to show that we can get from any collection of complex projective lines to any other of the same intersection configuration through such collections of lines.

\begin{lemma} The space of $d+1$ lines of intersection configuration $\mathcal{I}^{d}_1, \mathcal{I}^{d}_2, \mathcal{I}^4_4, \mathcal{I}^5_5$, $\mathcal{I}^5_6$, or $\mathcal{I}^d_7$ is connected and non-empty. \label{lemma:configconnected} \end{lemma}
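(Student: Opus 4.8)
The plan is to prove connectedness and non-emptiness of each configuration space by exhibiting an explicit model configuration of lines (establishing non-emptiness) and then showing that any two configurations of the same combinatorial type can be joined by a path through valid configurations. I would parametrize a line in $\C P^2$ by its coefficients in the dual projective plane $(\C P^2)^*$, so that a configuration of $d+1$ lines becomes a point in $((\C P^2)^*)^{d+1}$, and the conditions defining $\mathcal{I}^d_j$ (which triples or larger subsets of lines are concurrent, and that no other coincidences occur) carve out a subset of this product. The key observation is that a concurrence condition ``lines $\ell_{i_1},\dots,\ell_{i_r}$ meet in a common point'' is a closed algebraic condition, while the genericity conditions ``no other pair/triple is concurrent'' are open conditions, so each configuration space is a locally closed subvariety; I want to show it is irreducible and smooth enough that its real points form a connected manifold.

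First I would handle non-emptiness directly: for each of the listed configurations $\mathcal{I}^d_1,\mathcal{I}^d_2,\mathcal{I}^4_4,\mathcal{I}^5_5,\mathcal{I}^5_6,\mathcal{I}^d_7$, I would write down explicit equations of $d+1$ lines realizing exactly the prescribed incidences, using the fact (from figure \ref{fig:configurations}) that $C_0$ meets each other line in a distinct point and only a controlled number of the remaining lines share a common multi-point. Since the prescribed multi-points are few and each line passes through at most two of them, I can place the required concurrence points in general position and then choose lines through them, checking by a dimension count that no unwanted extra coincidences are forced. Second, for connectedness, I would argue that the incidence conditions defining each $\mathcal{I}^d_j$ can be realized by first choosing the finitely many multi-points freely (the space of such point-choices in $\C P^2$, avoiding the degenerate coincidences, is connected since it is an open subset of a product of projective planes minus a proper subvariety), and then choosing each line through whichever prescribed points lie on it; the lines not constrained to pass through any multi-point, and the extra freedom in lines through a single point, both vary in connected spaces (a line through $r\le 2$ fixed points, with $r<3$, moves in a connected positive-dimensional family). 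Concatenating these connected choices shows any configuration can be deformed to the explicit model.

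The main obstacle I anticipate is verifying that throughout such a deformation the configuration stays of the \emph{same} type $\mathcal{I}^d_j$, i.e. that no forbidden extra concurrence appears and none of the required ones accidentally breaks, en route. To control this I would move the data one line at a time, keeping all other lines and all designated multi-points fixed, and argue that the locus of ``bad'' positions for the line being moved (those creating an unwanted coincidence with the fixed configuration) is a proper closed subset of the connected parameter space for that line, hence its complement is connected and path-connected; this uses that $\C P^2$ and the relevant pencils of lines are projective and that the bad locus is cut out by finitely many nontrivial algebraic conditions. Iterating over the lines, and first moving the multi-points into their model positions, yields a path in the full configuration space, which combined with non-emptiness gives the claim. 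The delicate bookkeeping is to confirm, configuration by configuration, that the number of free multi-points and the number of lines through each is small enough (no line through three or more prescribed points, since that would over-determine it) that every intermediate choice genuinely lands in the same stratum; this is where the explicit structure of the six listed configurations, rather than a fully general argument, is essential.
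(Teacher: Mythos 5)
Your proposal is correct in substance and rests on the same two pillars as the paper's proof: non-emptiness via explicit linear models, and connectedness via the observation that every unwanted coincidence is a proper algebraic condition --- hence of real codimension at least two --- inside a connected parameter space, combined with the crucial structural fact that in each of the listed configurations no line passes through more than two multi-points. The difference is in how the parameter space is decomposed. The paper inducts on the lines: it normalizes $C_0$ and $C_1$ by a projective transformation and then adds $C_2,\cdots,C_d$ one at a time, parametrizing each new line by its intersection points with $C_0$ and $C_1$ (two parameters), by a single point on $C_0$ if the new line must pass through one existing multi-point (one parameter), or by nothing at all if it must pass through two (zero parameters); in each case the bad locus is complex codimension one, so connectedness of $\mathcal{M}_n$ follows from that of $\mathcal{M}_{n-1}$. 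You instead fiber the same space the other way around, placing the multi-points in $\C P^2$ first and then choosing lines through them in pencils; this is an equally valid and arguably more symmetric parametrization. What the paper's line-by-line ordering buys is that it never needs to move several lines in a coordinated way, and this is exactly where your supplementary deformation argument has a wrinkle: moving ``one line at a time, keeping all other lines and all designated multi-points fixed'' cannot, as literally stated, relocate a multi-point, since every line through that point must move simultaneously with it. That step should simply be discarded in favor of your primary parametrization, where a path in the (points)$\times$(lines) parameter space coordinates such motions automatically. Finally, to pass from ``connected base, connected fibers'' to ``connected total space'' you should note that your incidence space is a tower of projective bundles and pencils, hence irreducible, and that the configuration space is a Zariski-open subset of it; this is precisely where the hypothesis that no line carries three or more prescribed points enters, matching the paper's remark that configurations violating this can have disconnected or empty moduli.
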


\begin{proof}
Fix $(a,d)\in \{(1,d), (2,d), (4,4), (5,5), (5,6), (d,7)\}$. Order the lines in the configuration $C_0,C_1,\cdots, C_d$ where $C_0$ intersects all other lines in double points. For $1\leq n\leq d$, let $S_n(\mathcal{I}_a^d)$ denote the intersection configuration formed by the subset of lines $C_0 \cup \cdots \cup C_n$. Note that $S_d(\mathcal{I}_a^d)=\mathcal{I}_a^d$, and increasing n corresponds to placing additional lines in the configuration as a subset of $\mathcal{I}_a^d$. Let $M_n$ denote the moduli space of complex projective lines intersecting according to the configuration $S_n(\mathcal{I}_a^d)$.

Any complex projective line is determined by choosing two distinct points in $\C P^2$. Through a diffeomorphism given by an element in $PGL(2,\C)$ we may assume that two of the lines are standardly given as $C_0=\{[0:z_1:z_2]\}$ and $C_1=\{[z_0:0:z_2]\}$. Therefore, when $n=1$, $\mathcal{M}_n$ is a point after we mod out by the $PGL(2,\C)$ action on $\C P^2$. 

Any other line intersects each of $C_0$ and $C_1$ in a single point. Because $C_0$ intersects each line in a distinct point, there are no other $C_i$ passing through $C_0\cap C_1$. Therefore $C_2$ intersects $C_0$ and $C_1$ each away from $C_0\cap C_1$, therefore (regardless of our choice of $(a,d)$) $\mathcal{M}_2$ is parametrized by $[C_0\setminus (C_0\cap C_1)]\times [C_1\setminus (C_0\cap C_1)]$.

Now, we proceed by induction on $1\leq n\leq d$. Here it will be important that $(a,d)$ is one of the choices specified in the statement. The property that all of these intersection configurations share, is that no line contains more than two multi-points of intersection (where multi-point means a point where at least three lines pass through the same point).

Inductively assume that the moduli space $\mathcal{M}_{n-1}$ of configurations of projective lines $C_0,C_1,\cdots , C_{n-1}$ is a connected subset of $[C_0\setminus (C_0\cap C_1)]^{a_{n-1}}\times [C_1\setminus (C_0\cap C_1)]^{b_{n-1}}$. There are three cases for how $C_n$ is determined. The first is that $C_n$ meets $C_j$ in a double point for all $0\leq j \leq n-1$. The second is that $C_n$ passes through the common intersection of $C_{j_1}\cap \cdots \cap C_{j_{\ell}}$ for $\ell \geq 2$, $1\leq j_1<j_2<\cdots <j_{\ell}\leq n-1$, and $C_n$ meets $C_j$ at a double point for $j\neq j_1,\cdots, j_{\ell}$. The third is that $C_n$ passes through $C_{j_1}\cap\cdots \cap C_{j_{\ell}}$ and $C_{j_{\ell+1}}\cap\cdots \cap C_{j_{\ell+\ell'}}$ for $\ell,\ell'\geq 2$, $1\leq j_i\leq n-1$, and $j_i$ distinct, but $C_n$ intersects $C_j$ at a double point for $j\in \{1,\cdots , n-1\}\setminus \{j_1,\cdots,j_{\ell'}\}$. Note that these are all the possibilities we must consider, because in all of the configurations listed in the statement of the theorem, each line contains no more than two multi-intersection points.

In the first case, $C_n$ is determined by choosing its intersection points on $C_0\setminus (C_0\cap C_1)$ and $C_1\setminus (C_0\cap C_1)$, but some pairs $(v_1,v_2)\in [C_0\setminus (C_0\cap C_1)]\times [C_1\setminus (C_0\cap C_1)]$ will determine a line that passes through some intersection point $C_j\cap C_{j'}$ for $0\leq j,j'<n$. However, this is a real codimension $2$ phenomenon because there is a real 2-parameter family of projective lines that pass through points on $C_j$ in a neighborhood of $C_j\cap C_{j'}$. Therefore the subset of $\{(x,v_1,v_2)\in \mathcal{M}_{n-1}\times [C_0\setminus (C_0\cap C_1)]\times [C_1\setminus (C_0\cap C_1)]\}$ corresponding to configurations where $C_n$ does not intersect each $C_j$ for $j<n$ in a double point is a codimension $2$ subset, so its complement, $\mathcal{M}_n$, is connected and satisfies the inductive hypothesis.

In the second case, $C_n$ is required to pass through $C_{j_1}\cap \cdots \cap C_{j_{\ell}}$ and must avoid other intersections of the $C_j$, for $j<n$. The point $C_{j_1}\cap \cdots \cap C_{j_{\ell}}$ together with a point on $C_0\setminus (C_0\cap C_1)$ determines a unique line, and the configuration space of lines $C_0, \cdots , C_{n-1}$ together with this line is parameterized by $\mathcal{M}_{n-1}\times [C_0\setminus (C_0\cap C_1)]$, which is connected by the inductive assumption. Again, we must remove points from this space for which the last line passes through intersections of the $C_j$, $j<n$ other than $C_{j_1}\cap \cdots \cap C_{j_{\ell}}$, but this is a real codimension $2$ phenomenon so the complement, $\mathcal{M}_n$ is connected and satisfies the inductive hypothesis.

In the third case, $C_n$ is required to pass through two points, $C_{j_1}\cap\cdots \cap C_{j_{\ell}}$ and $C_{j_{\ell+1}}\cap \cdots \cap C_{j_{\ell+\ell'}}$, which are completely determined by the point in $\mathcal{M}_{n-1}$, so we gain no additional parameters. However, to get $\mathcal{M}_n$, we do need to cut out a subspace of $\mathcal{M}_{n-1}$ for which the line determined by the intersection points $C_{j_1}\cap\cdots \cap C_{j_{\ell}}$ and $C_{j_{\ell+1}}\cap \cdots \cap C_{j_{\ell+\ell'}}$ passes through an intersection of $C_j\cap C_{j'}$ for $(j,j')\notin \{j_1,\cdots , j_{\ell} \}^2\cup \{j_{\ell+1}, \cdots , j_{\ell+\ell'}\}^2$. This occurs when a complex equation of $x\in \mathcal{M}_{n-1}$ is satisfied, so it is again a codimension $2$ phenomenon so $\mathcal{M}_n$ is connected.

Inductively this proves that $\mathcal{M}_d$ is parameterized by a connected subspace of $[C_0\setminus (C_0\cap C_1)]^{a_d}\times [C_1\setminus (C_0\cap C_1)]^{b_d}$. 

It remains to show that each of these moduli spaces is non-empty. To do this we provide a model for each in terms of standard homogeneous coordinates $\{[z_0:z_1:z_2]\}$ on $\C P^2$.
$$\begin{array}{|c|l|}
\hline
\mathcal{I}_1^d&\{z_1=0\}, \{jz_0-z_2=0\}\text{ for } 1\leq j\leq d\\\hline
\mathcal{I}_2^d&\{z_1=0\}, \{z_0-z_1=0\}, \{jz_0-z_2=0\} \text{ for } 2\leq j \leq d\\\hline
\mathcal{I}_4^4&\{z_0=0\}, \{z_1=0\}, \{z_2=0\}, \{z_0+z_1+z_2=0\}, \{2z_0+z_1-z_2=0\}\\\hline
\mathcal{I}_5^5& \{z_0=0\}, \{z_1=0\}, \{z_2=0\}, \{z_0+z_1+z_2=0\}, \{2z_0+z_1-z_2=0\}, \{2z_1-z_2=0\}\\\hline
\mathcal{I}_6^5&\{z_0=0\}, \{z_1=0\}, \{z_2=0\}, \{z_0+z_1+z_2=0\}, \{2z_0+z_1-z_2=0\}, \{3z_0+2z-1-4z_2=0\}\\\hline
\mathcal{I}_7^d&\{z_0+z_1+z_2=0\}, \{z_0=0\}, \{z_1=0\}, \{z_2=0\}, \{2z_1-z_2=0\}, \{jz_2-z_0=0\} \text{ for } 5\leq j\leq d\\\hline
\end{array}$$
\end{proof}

\emph{Remark:} In contrast, if we considered a configuration where some of the lines passed through three or more multi-points, the fact that such a line exists puts a constraint on the variables determining the earlier lines. These yield polynomial relations on the homogeneous coordinates of the points determining the lines. With sufficiently many lines, one can construct intersection configurations for which the space of complex projective lines in that configuration is disconnected or empty. 

We conclude that for the listed configurations, the homology representation of the dual graph determines a unique embedding of a regular neighborhood of the dual graph. This is because the regular neighborhood of the dual graph can be obtained by blowing up one of the above discussed configurations at points specified by the homology representation.

Note that although this provides an embedding of this dual configuration of spheres into a symplectic manifold, it does not ensure that there is a concave neighborhood of this configuration (it is not known whether this always exists). Therefore we only obtain upper bounds on the number of symplectic fillings, and will find lower bounds through other means in our examples.
}
}

\subsection{Proof of theorem \ref{thm:finite}}
{
We can now obtain the finiteness results of theorem \ref{thm:finite} as a corollary.

\begin{proof}
Given any symplectic filling, it can be glued to the concave neighborhood of the dual configuration to give $(\C P^2\# M\overline{\C P^2},\omega_M)$ by sections \ref{dualgraph}, \ref{cutpaste}, and \ref{McDuff}. The homology classes the spheres in the dual configuration can represent in $H_2(\C P^2\# M\overline{\C P^2})$ are restricted as discussed in section \ref{sec:homology}. If the number of singular fibers is $k=3,4$ or $5$ and $e_0=-k-1$, the number of arms in the dual graph $d$ is sufficiently small so the only possible intersection configurations of the image of the spheres in the dual graph after blowing-down are $\mathcal{I}_1^3, \mathcal{I}_2^3$, $\mathcal{I}_1^4$, $\mathcal{I}_2^4$, $\mathcal{I}_4^4$, $\mathcal{I}_1^5$, $\mathcal{I}_2^5$, $\mathcal{I}_5^5$, $\mathcal{I}_6^5$, or $\mathcal{I}_7^5$. When $e_0=-k-2$ and $k=3,4,5$, the results of lemma \ref{lem:e0neg} imply that the only possible intersection configurations are $\mathcal{I}_1^4$, $\mathcal{I}_2^4$, $\mathcal{I}_1^5$,$\mathcal{I}_2^5$, $\mathcal{I}_7^5$, $\mathcal{I}_1^6$, $\mathcal{I}_2^6$, and $\mathcal{I}_7^6$. When $e_0\leq -k-3$, lemma \ref{lem:e0neg} implies that the only possible intersection configurations are $\mathcal{I}^d_1$ and $\mathcal{I}^d_2$ where $d=-e_0-1$. Therefore, lemmas \ref{lemma:4spherehtpy} and \ref{lemma:configconnected} imply that a regular neighborhood of the image of the spheres in the dual graph after blowing down embeds uniquely up to isotopy into $\C P^2$. Therefore the embedding of a regular neighborhood of the dual graph is uniquely determined up to isotopy by blowing up at the points dictated by the homology classes each sphere in the dual graph represents in $H_2(\C P^2 \# M\overline{\C P^2})$. The number of possible embeddings of a neighborhood of the dual graph into $\C P^2\# M\overline{\C P^2}$ provides an upper bound on the number of convex symplectic fillings of the corresponding Seifert fibered space with contact structure $\xi_{pl}$.

By section \ref{sec:homology}, in terms of a standard basis $(\ell, e_1,\cdots , e_M)$ for $H_2(\C P^2\# M\overline{\C P^2})$, the homology classes $[C_0],\cdots , [C_m]$ must have the form
\begin{eqnarray*}
\left[C_0\right]&=& \ell\\
\left[C_1\right]&=& \ell -e_{i^1_1}-\cdots -e_{i^1_{n_1+1}}\\
&\vdots &\\
\left[C_k\right]&=& \ell -e_{i^k_1}-\cdots -e_{i^k_{n_k+1}}\\
\left[C_{k+1}\right]&=& \ell-e_{i^{k+1}_1}-e_{i^{k+1}_2}\\
&\vdots & \\
\left[C_d\right]&=& \ell-e_{i^d_1}-e_{i^d_2}\\
\left[C_{d+1}\right]&=& e_{i^{k+1}_0} -e_{i^{k+1}_1}-\cdots -e_{i^{k+1}_{n_{k+1}-1}}\\
&\vdots &\\
\left[C_{m}\right]&=& e_{i^{m}_0} -e_{i^{m}_1}-\cdots -e_{i^{m}_{n_{m}-1}}\\
\end{eqnarray*}
so up to symmetry of relabelling the $e_i$, the only possible differences are determined by the pairs $(j,p),(j',p')$ for which $i_p^j=i_{p'}^{j'}$. There are significant restrictions determined by the intersection data of the spheres, but we can easily get a very loose upper bound on the number of possibilities. If $N=\sum_{j=1}^m (n_j+1)$, then there are less than $N$ different $e_i$ which have non-zero coefficient in any $[C_j]$. Thus there are at less than $N^N$ arrangements of these $e_i$ into the slots above. We also may assume that a minimal symplectic filling is the complement of one of these embeddings in $\C P^2\# M\overline{\C P^2}$ where $M\leq N$, because $N$ is the largest number of exceptional spheres which intersect the dual configuration of spheres. Therefore there are certainly no more than $N^{N+1}$ diffeomorphism types of minimal symplectic fillings of a given Seifert fibered space satisfying the hypotheses of the theorem with the contact structure $\xi_{pl}$.

For a general dually positive Seifert fibered space (no restrictions on $k$ or $e_0$ other than $e_0\leq -k-1$), the results discussed in sections \ref{dualgraph}, \ref{cutpaste}, \ref{McDuff}, and \ref{sec:homology} still hold, though we no longer have a proof that the dual graph has a unique embedding into $\C P^2\# M\overline{\C P^2}$ for each homology representation. However, we still have that any strong symplectic filling glues to the dual graph to give $\C P^2\#M\overline{\C P^2}$, so the homology of the filling fits into the following long exact sequence by the Mayer-Vietoris theorem.
$$0\to H_4(\C P^2\#\overline{\C P^2})\to H_3(Y)\to H_3(\text{dual graph})\oplus H_3(\text{filling})\to H_3(\C P^2\#\overline{\C P^2})$$
This immediately implies $H_3(\text{filling})=0$ after noting which terms are closed manifolds of the correct dimension. The rest of the Mayer-Vietoris sequence over rational coefficients is as follows.
$$H_3(\C P^2\# M\overline{\C P^2};\Q)\to H_2(Y;\Q)\to H_2(\text{dual graph};\Q)\oplus H_2(\text{filling};\Q)\to H_2(\C P^2\# M\overline{\C P^2};\Q)$$
$$\to H_1(Y;\Q) \to H_1(\text{dual graph};\Q)\oplus H_1(\text{filling};\Q)\to H_1(\C P^2\# M\overline{\C P^2})$$
Since $Y$ is a 3-manifold, Poincare duality implies $H_2(Y;\Q)\isom H_1(Y;\Q)\isom \Q^{b_1(Y)}$. Note that the homology of the dual graph is generated by the $m+1$ spheres that are plumbed together. Filling in known terms of the sequence gives
$$0\to \Q^{b_1(Y)}\to \Q^{m+1}\oplus H_2(\text{filling};\Q)\to \Q^{M+1}\to \Q^{b_1(Y)}\to H_1(\text{filling};\Q)\to 0.$$
Exactness at the end of the sequence implies $b_1(\text{filling})\leq b_1(Y)$. Chasing through the rest of the sequence gives the following equation.
$$b_1(\text{filling})+M+1=m+1+b_2(\text{filling}).$$
Therefore the Euler characteristic and signature satisfy $$\chi(\text{filling})=M-m+1$$
and
$$|\sigma(\text{filling})|\leq b_2(\text{filling})\leq M-m+b_1(Y).$$
If the symplectic filling is minimal, then every exceptional sphere in $\C P^2\#M\overline{\C P^2}$, must intersect the dual graph. Since there is some almost complex structure $J$ for which the spheres in the dual graph are $J$-holomorphic and we can find $J$-holomorphic representatives of the exceptional spheres (as argued in section \ref{sec:blowingdown}), the algebraic and geometric intersection numbers of these spheres coincide. In particular, every basis element $e_i\in H_2(\C P^2\#M\overline{\C P^2})$ of square $-1$ must appear with non-zero coefficient in the homology class $[C_j]$ of some sphere in the dual graph. As mentioned for the first half of the theorem, if $[C_j]^2=-n_j$ for $j=1,\cdots, m$, the number of $e_i$ showing up with non-zero coefficient in the homology of the dual graph is bounded above by $N=\sum_{j=1}^m(n_j+1)$. Therefore $$\chi(\text{filling})\leq M-m+1\leq N$$
and
$$|\sigma(\text{filling})|\leq M-m+b_1(Y)\leq N+b_1(Y).$$
Note that $N$ is determined by the dual graph, which can be generated given the Seifert invariants of $Y$. 
\end{proof}

Of course, the bound given here is far from sharp. Note that one can obtain tighter bounds in specific cases when the possible homology representations of the dual graph are known. Homology representations which are ``more efficient'' (namely they can be written using few distinct $e_i$), will yield fillings with stricter bounds on $\chi , \sigma$. We will see in examples that the homology representations yielding the filling given by the original plumbing are the least efficient with the $e_i$'s, so it is not surprising that our examples will give alternate fillings with smaller Euler characteristic than the original plumbing.
}
}

\section{Simplest Examples}
{
\label{simplestexamples}
In order to see how this argument produces a list of diffeomorphism types of possible symplectic fillings of a given dually positive $(Y,\xi_{pl})$, it is useful to look at concrete examples. Because the number of symplectic fillings is determined by possible ways of writing the homology classes of the spheres in the dual graph, one can obtain a simple family of examples by insisting that the dual graph is star-shaped with three arms, and each arm has length one as in figure \ref{fig:DualGraphFamilyA}. These graphs arise as the dual graphs of three armed graphs with central vertex decorated by $-4$ and all other vertices decorated by $-2$. These graphs depend on three positive integer parameters $n_1$, $n_2$, and $n_3$ which determine the lengths of the arms as in figure \ref{fig:GraphFamilyA}, and are the negations on the coefficients of the spheres in the arms of the dual graph. We work through the classification of convex fillings for this example in full detail to make it clear how to obtain the possible diffeomorphism types of symplectic fillings from the homological restrictions.

\begin{figure}
	\centering
	\subfloat[Graphs]{
		 \label{fig:GraphFamilyA}
		 \psfragscanon
		 \psfrag{1}{$-2$}
		 \psfrag{2}{$-2$}
		 \psfrag{3}{$-2$}
		 \psfrag{4}{$-2$}
		 \psfrag{5}{$-2$}
		 \psfrag{6}{$-2$}
		 \psfrag{7}{$-4$}
		 \psfrag{8}{$n_1-1$}
		 \psfrag{9}{$n_2-1$}
		 \psfrag{A}{$n_3-1$}
		 \includegraphics[width=5cm]{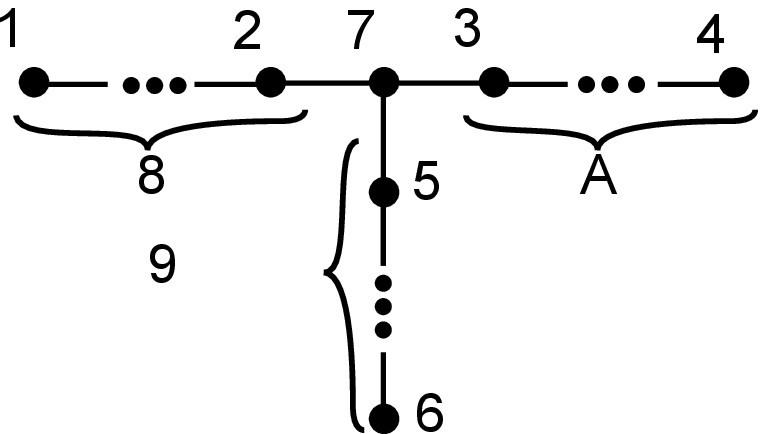}
		 \psfragscanoff
		} 
	\;\;\;\;\;\;\;\;\;	\subfloat[Dual Graphs]{
		\label{fig:DualGraphFamilyA}
		\psfragscanon
		\psfrag{0}{$+1$}
		\psfrag{A}{$-n_1$}
		\psfrag{B}{$-n_2$}
		\psfrag{C}{$-n_3$}
		\includegraphics[width=3.5cm]{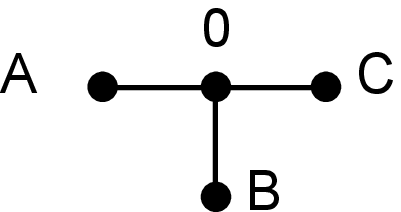}
		\psfragscanoff
		} 
 \caption{The graphs and dual graphs representing a simple family of plumbings.}
\label{fig:GraphDualGraphFamilyA}
 \end{figure}

Now we need to determine the possible ways to write $[C_0],[C_1],[C_2],[C_3]$ in terms of the standard basis $(\ell, e_1,\cdots , e_M)$ for $H_2(X_M;\Z)$. By section \ref{sec:homology}, we know that they must have the form
\begin{eqnarray*}
\left[C_0\right]&=& \ell\\
\left[C_1\right]&=& \ell -e_{i^1_1}-\cdots -e_{i^1_{n_1+1}}\\
\left[C_2\right]&=& \ell -e_{i^2_1}-\cdots -e_{i^1_{n_2+1}}\\
\left[C_3\right]&=& \ell -e_{i^3_1}-\cdots -e_{i^k_{n_3+1}}\\
\end{eqnarray*}
and by lemma \ref{lem:homologyint}, $|\{i_1^j, \cdots , i^j_{n_j+1}\}\cap \{i_1^{j'}, \cdots , i^{j'}_{n_{j'+1}}\}|=1$ for $j\neq j'\in \{1,2,3\}$. There are exactly two different ways three sets can have each pairwise intersection be a unique element. The first is that they all share a single element in common, and the second is that each of the three pairs has a different common element. Up to relabeling, the two possibilities are as follows.

$$\begin{array}{rcl|rcl} 
&&\text{Case A}& & &\text{Case B} \\\hline
\left[C_0\right]&=&\ell&\left[C_0\right]&=&\ell\\
\left[C_1\right] &=& \ell -e_1-e_1^1-\cdots - e_{n_1}^1 &\left[C_1\right] &=& \ell -e_1-e_2-e_1^1-\cdots - e_{n_1-1}^1 \\
\left[ C_2 \right] &=& \ell -e_1-e_1^2- \cdots - e_{n_2}^2&\left[ C_2 \right] &=& \ell -e_1-e_3-e_1^2- \cdots - e_{n_2-1}^2\\
\left[C_3\right] &=& \ell -e_1-e_1^3-\cdots - e_{n_3}^3&\left[C_3\right] &=& \ell -e_2-e_3-e_1^3-\cdots - e_{n_3-1}^3\\
\end{array}$$

Note here all of the $e_i^j$ are all distinct from each other and from the $e_{m}$'s.

Now we can see how each of these translates into an embedding of the dual configuration. After blowing down all exceptional spheres, the proper transform of the image of the dual graph spheres under the embedding will be four symplectic spheres homologous to $\C P^1$, and by section \ref{uniqueembedding} we may assume that they are four complex projective lines. In case A, $e_1$ appears in $[C_1],[C_2]$ and $[C_3]$ with non-zero coefficient, so when we blow up the sphere representing $e_1$, it will be at a point of intersection of three of the original $+1$ spheres (the complex projective lines after blowing down must therefore be in configuration $\mathcal{I}^3_1$). The remaining blow-ups are done on a point of a single one of these three spheres, so that the resulting proper transforms have sufficiently negative self-intersection numbers.

In a Kirby diagram, the original four complex projective lines are represented by four $+1$ framed unknots with a single positive twist as in figure \ref{fig:3AStep1}. In order to ensure this is a diagram for $\C P^2$, we cancel the extra three 2-handles with three 3-handles, and close off the manifold with a 4-handle. The first blow-up in case A introduces a new $-1$ framed unknot which links three of the four original link components, untwists these three components, and reduces the framing coefficients on each by $1$. The remaining blow-ups in case A introduce more $-1$ framed unknots which link once with one of the three untwisted original link components and reduce the corresponding framing coefficient by $1$. The resulting diagram is shown in figure \ref{fig:3AStep2}.

\begin{figure}
	\centering
	\subfloat[Diagram for $\C P^2$ with four $+1$ spheres.]{
	\label{fig:3AStep1}	
	\psfragscanon
	\psfrag{T}{3 3-handles, 1 4-handle}
	\psfrag{A}{$1$}
	\psfrag{B}{$1$}
	\psfrag{C}{$1$}
	\psfrag{D}{$1$}
	\includegraphics[scale=.5]{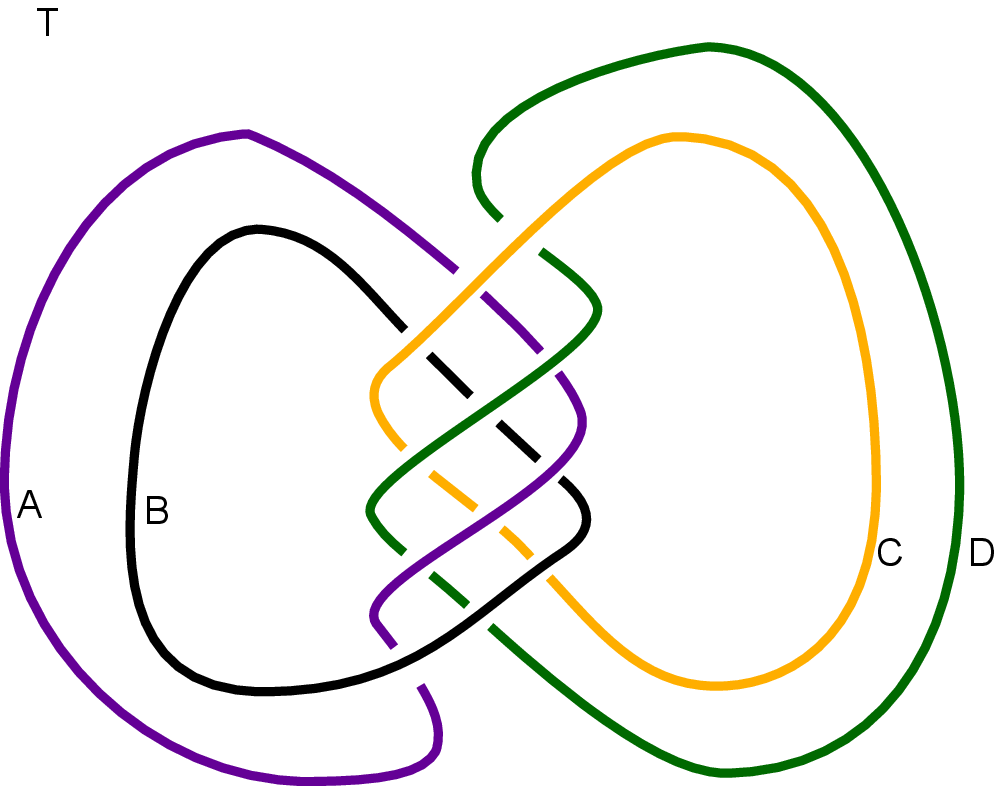}
	\psfragscanoff
	}
	\subfloat[Blow up in case A.]{
	\label{fig:3AStep2}
	\psfragscanon
	\psfrag{T}{3 3-h, 1 4-h}
	\psfrag{A}{$-n_1$}
	\psfrag{B}{$-n_2$}
	\psfrag{C}{$-n_3$}
	\psfrag{D}{$n_1$}
	\psfrag{E}{$n_2$}
	\psfrag{F}{$n_3$}
	\psfrag{G}{$1$}
	\psfrag{P}{$-1$}
	\includegraphics[scale=.55]{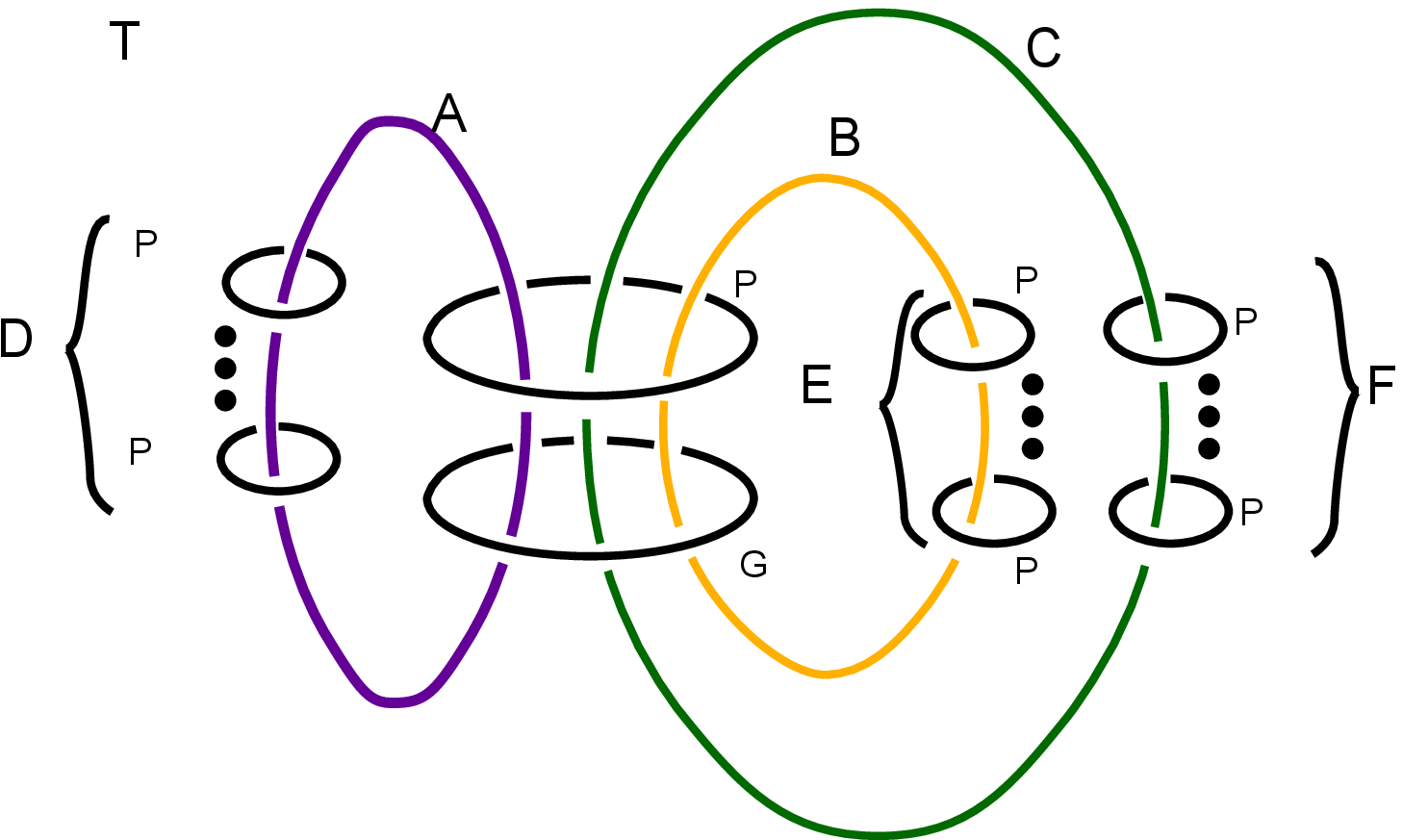}
	\psfragscanoff
	}\\
	\;\;\;\;\;\;\subfloat[Blow up in case B.]{
	\label{fig:3BStep2}
	\psfragscanon
	\psfrag{T}{3 3-h, 1 4-h}
	\psfrag{A}{$-n_1$}
	\psfrag{B}{$-n_2$}
	\psfrag{C}{$-n_3$}
	\psfrag{D}{$n_1-1$}
	\psfrag{E}{$n_2-1$}
	\psfrag{F}{$n_3-1$}
	\psfrag{G}{$1$}
	\psfrag{P}{$-1$}
	\includegraphics[scale=.6]{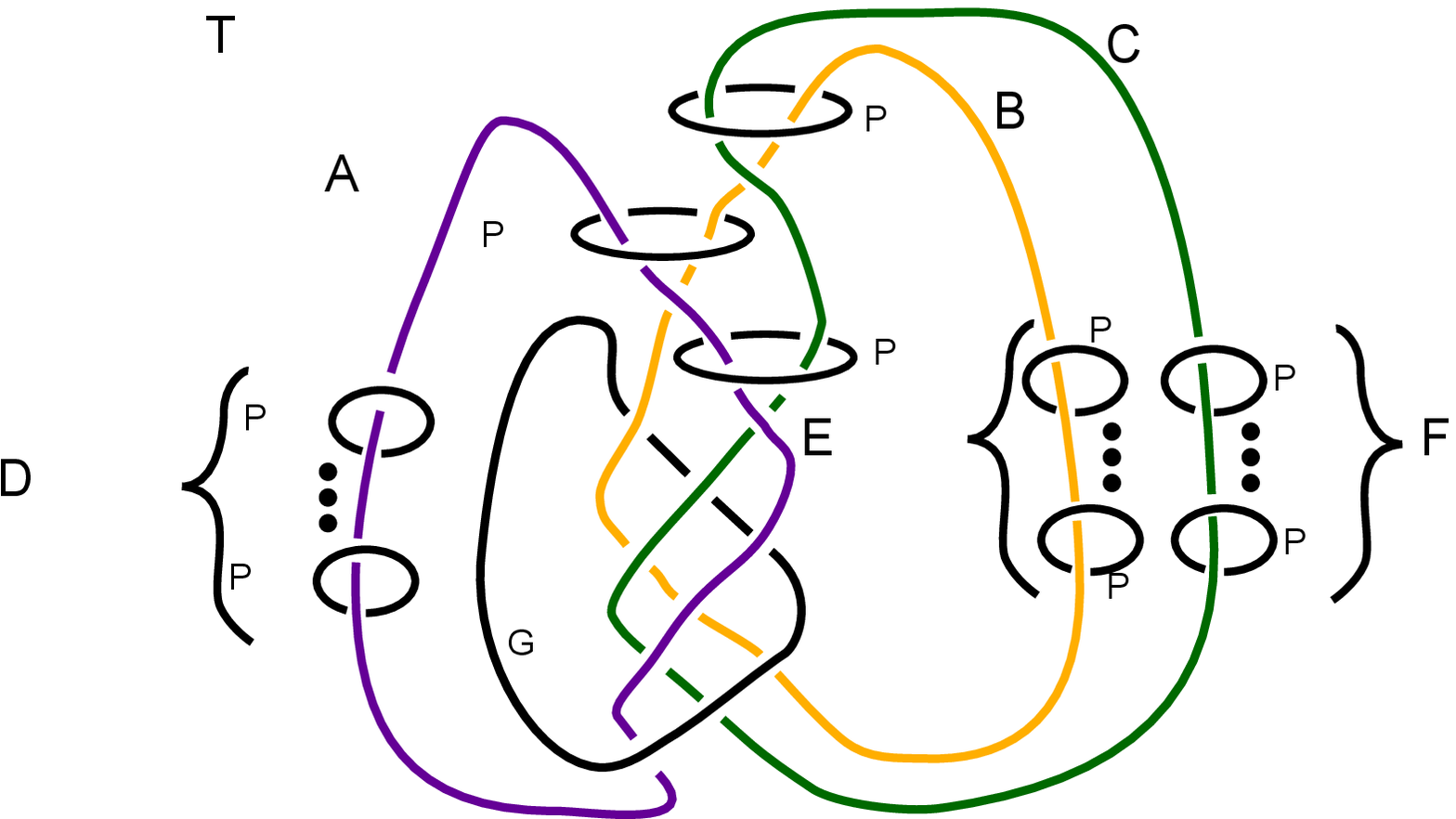}
	\psfragscanoff
	}
\caption{}
\label{fig:Embedding1}
\end{figure}

Similarly in case B, the blow-ups corresponding to $e_1,e_2,e_3$ are done at intersections of the three pairs of the three original spheres that are not $C_0$, and the other blow-ups are done at points on only one of their proper transforms. In the Kirby diagram, these are visible in figure \ref{fig:3BStep2}.

The symplectic dual configuration is visible in each of the previous diagrams for $X_M$ as the union of the cores of four 2-handles together with their Seifert surfaces pushed into the 0-handle. We wish to find the diffeomorphism types of the complements of these embeddings, since these are the potential symplectic fillings.

Because the union of the Seifert surfaces of the four attaching circles for $C_0,C_1,C_2,C_3$ is connected, their complement in the 0-handle retracts to a subset of the boundary of the 0-handle. Therefore the complement of the dual configuration is given by deleting the 0-handle and the four 2-handles corresponding to $C_0,C_1,C_2,C_3$. It is easier to understand the diffeomorphism type of the resulting manifold with boundary by turning the manifold upside down so the boundary appears on the top instead of on the bottom. Since both possible diagrams (figures \ref{fig:3AStep2} and \ref{fig:3BStep2}) have three 3-handles, the resulting upside-down handlebody in the complement of the dual configuration will have three 1-handles, together with 2-handles corresponding to all the extra 2-handles in the diagram which are not part of the dual configuration. The boundary of the 0-handle and 1-handles is $\#_3 S^2\times S^1$. This appears as a surgery diagram given by the mirror image of the original diagram with surgery coefficients the negations of the framings. Surgery coefficients are put in brackets, $\langle \cdot \rangle$. An attaching circle of an upside down 2-handle will be a 0-framed meridian of the surgery circle corresponding to the attaching circle of the original 2-handle (see \cite{GompfStipsicz} for more details on turning handlebodies upsidedown). In order to get the diagram into a more standard form, we perform diffeomorphisms on the boundary between the 1-handlebody and the 2-handles until the boundary looks standard (like 0-surgery on the three component unlink). Once the boundary of the 1-handlebody looks standard, we can replace each zero surgered unlink component with a dotted circle representing a 1-handle. The corresponding diagrams are in figures \ref{fig:3AStep3} and \ref{fig:3AStep4}. Further handleslides and 1-2 handle cancellations yield figure \ref{fig:3AStep5}.

\begin{figure}
	\centering
	\subfloat[Upsidedown complement of dual configuration in case A]{
	\label{fig:3AStep3}
	\psfragscanon
	\psfrag{A}{$\langle n_1 \rangle$}
	\psfrag{B}{$\langle n_2 \rangle$}
	\psfrag{C}{$\langle n_3 \rangle$}
	\psfrag{D}{$n_1$}
	\psfrag{E}{$n_2$}
	\psfrag{F}{$n_3$}
	\psfrag{G}{$\langle 1 \rangle$}
	\psfrag{H}{$0$}
	\psfrag{K}{$\langle -1 \rangle$}
	\psfrag{T}{3 1-h, 1 0-h}
	\includegraphics[scale=.55]{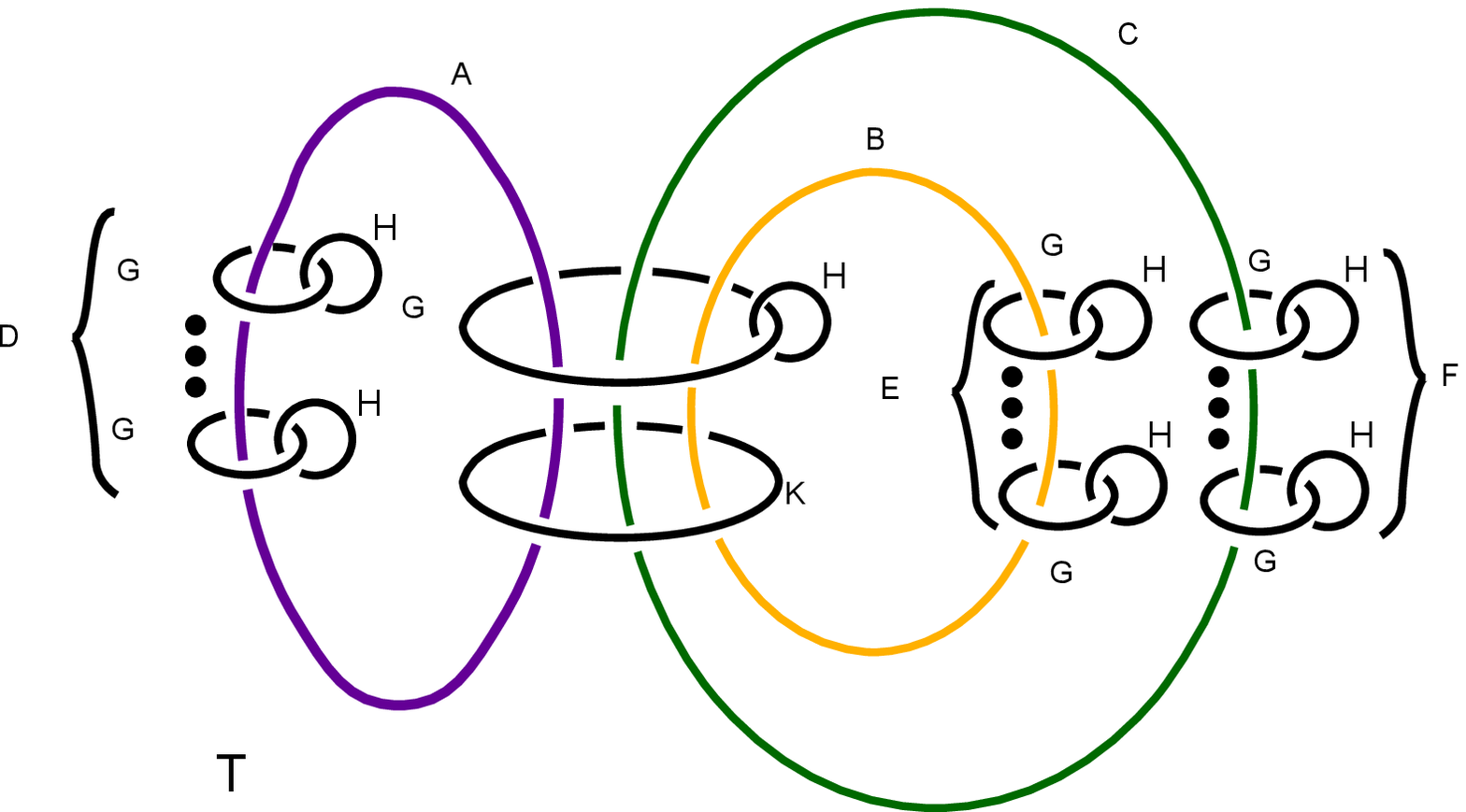}
	\psfragscanoff
	}
	\subfloat[Dotted circle notation]{
	\label{fig:3AStep4}
	\psfragscanon
	\psfrag{A}{$n_1$}
	\psfrag{B}{$n_2$}
	\psfrag{C}{$n_3$}\
	\psfrag{P}{$-1$}
	\includegraphics[scale=.55]{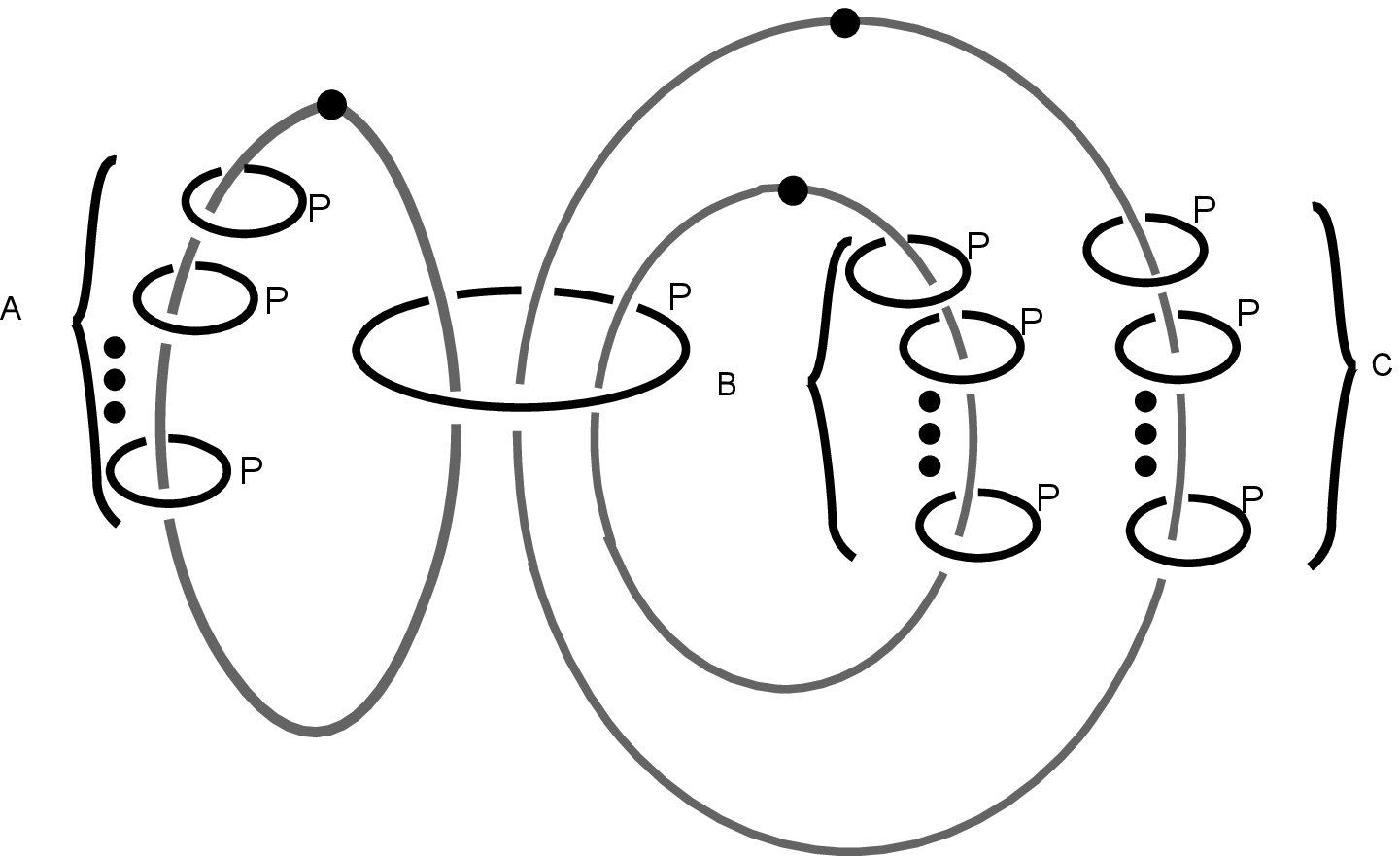}
	\psfragscanoff
	}\\
	\subfloat[Handleslides and cancellations]{
	\label{fig:3AStep5}\
	\psfragscanon
	\psfrag{A}{$n_1-1$}
	\psfrag{B}{$n_2-1$}
	\psfrag{C}{$n_3-1$}
	\psfrag{D}{$-4$}
	\psfrag{E}{$-2$}
	\includegraphics[width=6cm]{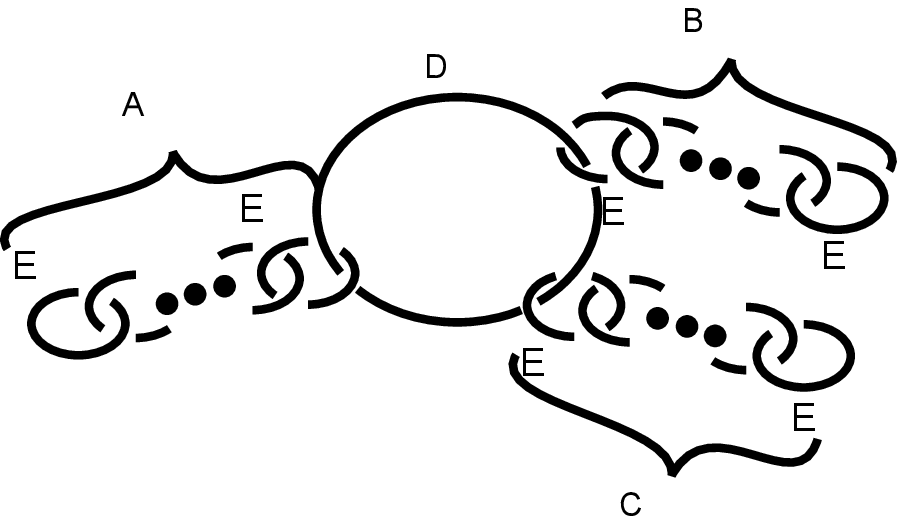}
	\psfragscanoff
	}
\caption{The diffeomorphism type of the possible symplectic filling from case A.}
 \label{fig:3AStep345}
 \end{figure}
 
Notice that the diagram in figure \ref{fig:3AStep5} is the original plumbing of spheres, which we know has a standard symplectic structure with convex boundary inducing $\xi_{pl}$ by theorem \ref{thmplumbingconvex}.
  
 In the embedding in figure \ref{fig:3BStep2} determined by case B, we find a different possible diffeomorphism type for a symplectic filling. The complement of the dual configuration turned upsidedown is given by figure \ref{fig:3BStep3}. Figures \ref{fig:3BStep4}, \ref{fig:3BStep5}, and \ref{fig:3BStep6} are obtained by surgery and handle moves.

\begin{figure}
	\begin{tabular}{cc}
	\subfloat[Upsidedown complement of dual configuration in Case B]{
	\label{fig:3BStep3}
	\psfragscanon
	\psfrag{A}{$\langle n_1 \rangle$}
	\psfrag{B}{$\langle n_2 \rangle $}
	\psfrag{C}{$\langle n_3 \rangle $}
	\psfrag{D}{$n_1-1$}
	\psfrag{E}{$n_2-1$}
	\psfrag{F}{$n_3-1$}
	\psfrag{G}{$\langle 1 \rangle $}
	\psfrag{H}{$\langle -1 \rangle $}
	\psfrag{K}{$0$}
	\psfrag{T}{1 $0$-h, 3 $1$-h}
	\includegraphics[scale=.55]{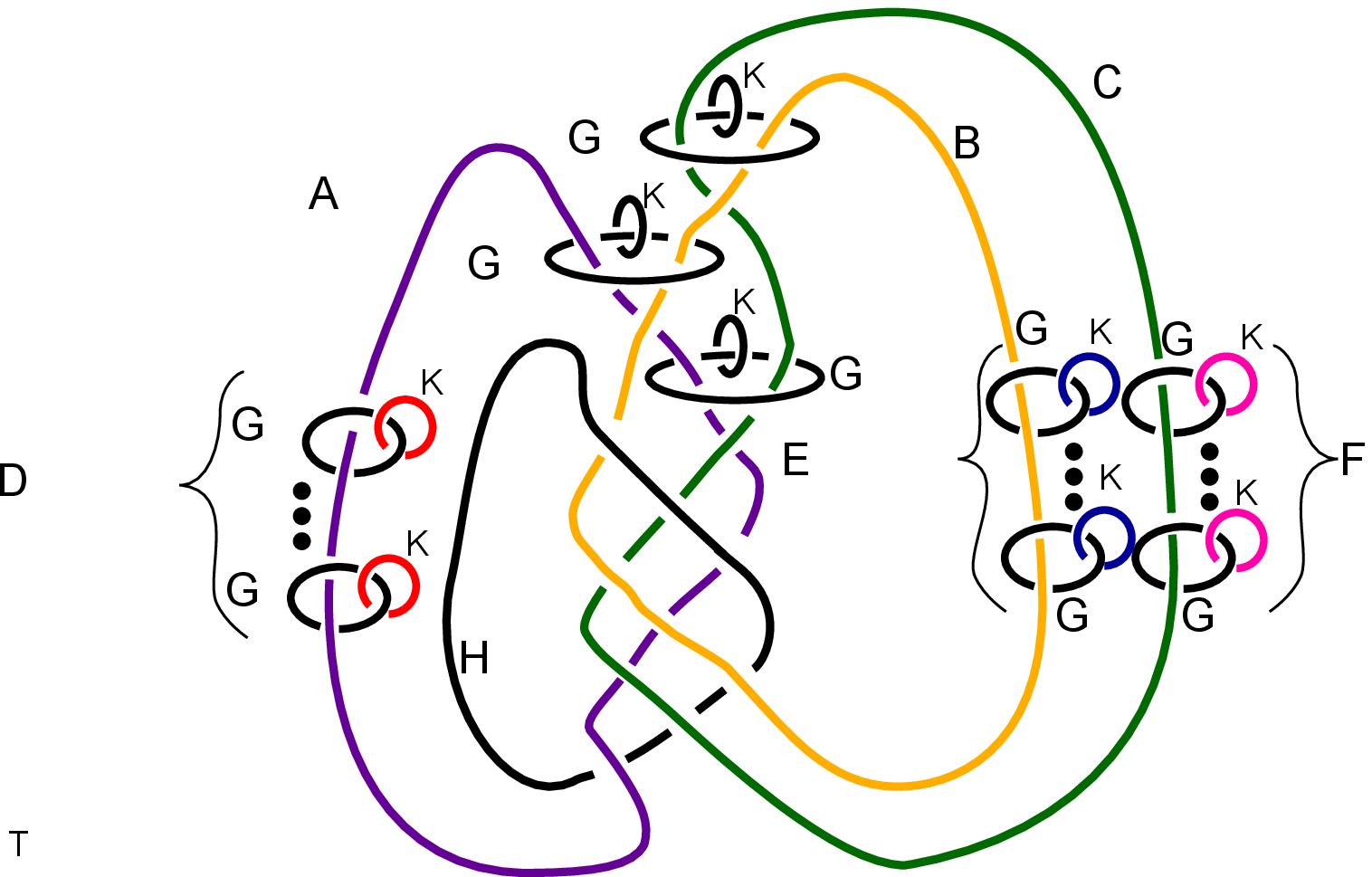}
	\psfragscanoff
	}&
	\subfloat[Moves on surgery diagram]{
	\label{fig:3BStep4}
	\psfragscanon
	\psfrag{A}{$n_1-1$}
	\psfrag{B}{$n_2-1$}
	\psfrag{C}{$n_3-1$}
	\psfrag{P}{$-1$}
	\psfrag{D}{$\langle 0 \rangle$}
	\psfrag{T}{1 $0$-h, 3 $1$-h}
	\includegraphics[scale=.55]{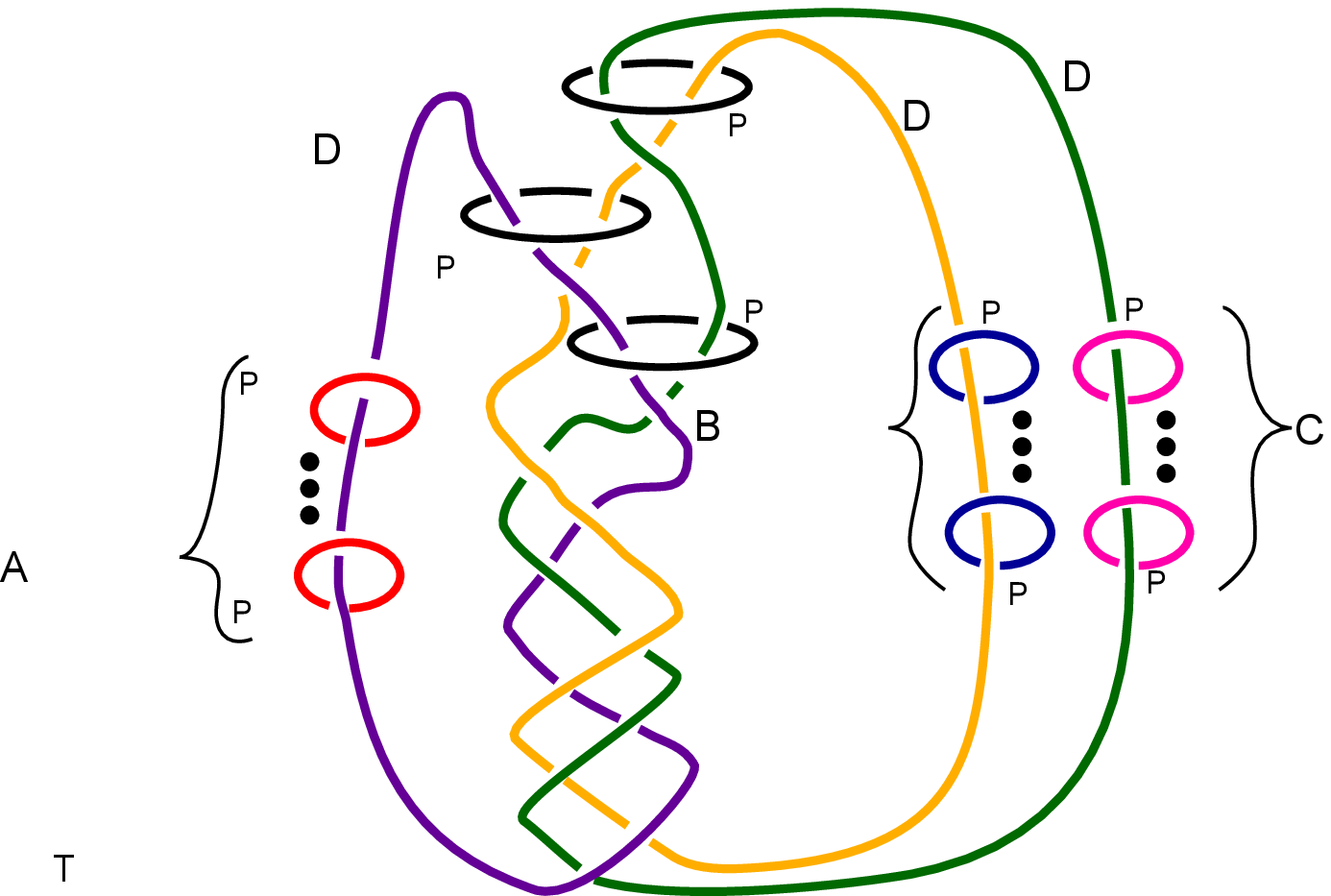}
	\psfragscanoff
	}\\
	\subfloat[Dotted circle notation]{
	\label{fig:3BStep5}
	\psfragscanon
	\psfrag{A}{$n_1-1$}
	\psfrag{B}{$n_2-1$}
	\psfrag{C}{$n_3-1$}
	\psfrag{P}{$-1$}
	\includegraphics[scale=.8]{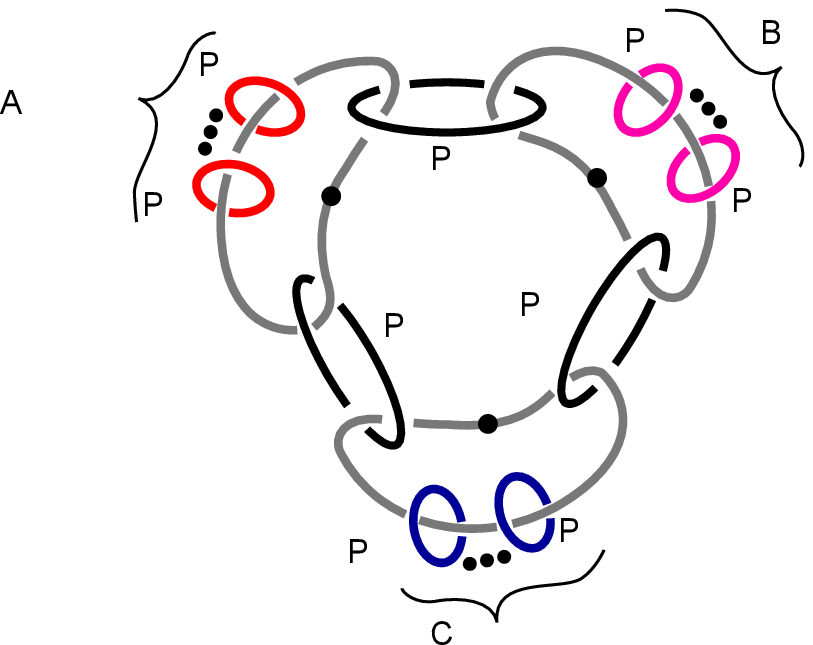}
	\psfragscanoff
	}&
	\subfloat[Handle cancellation]{
	\label{fig:3BStep6}
	\psfragscanon
	\psfrag{A}{$n_1-1$}
	\psfrag{B}{$n_2-1$}
	\psfrag{C}{$n_3-1$}
	\psfrag{D}{$-3$}
	\psfrag{E}{$-2$}
	\psfrag{F}{$-1$}
	\includegraphics[scale=.8]{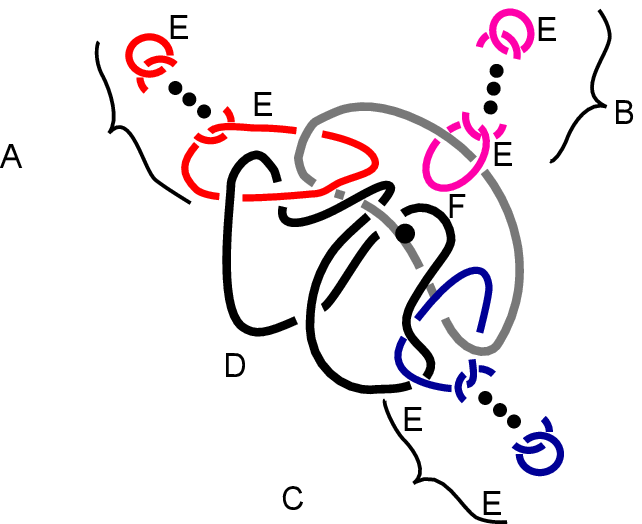}
	\psfragscanoff
	}
	\end{tabular}
\caption{The diffeomorphism type of the possible symplectic filling from case B.}
\label{fig:3BStep3456}
\end{figure}

Note that the resulting manifold in figure \ref{fig:3BStep6} for case B can be obtained from the resulting manifold in figure \ref{fig:3AStep5} by a rational blow-down of the $-4$ framed sphere (the 1-handle and the black $2$-handle form a rational homology ball). Because the symplectic structure on the manifold in figure \ref{fig:3AStep5}, has the $-4$-framed sphere as a symplectic submanifold, by an observation of Gompf \cite{GompfSymplGluing} we can cut out this $-4$-framed sphere and replace it with the rational homology ball as in figure \ref{fig:3BStep6} such that the symplectic structure extends over the rational homology ball. Note that this does not change the symplectic structure in a neighborhood of the boundary of the plumbing of spheres in figure \ref{fig:3AStep5}, so the boundary remains convex and induces the same contact structure $\xi_{pl}$.

 Alternatively we can arrange this diagram to be a Stein handlebody. If we arrange that the Kirby diagram is in a standard form such that all the 2-handles are contained in a rectangular box, the two attaching balls for each of the 1-handles are aligned on opposite sides of the box, and the 2-handles are attached along Legendrian tangles inside the box such that the framing coefficient is given by 1 less than the Thurston-Bennequin number of the attaching circle then there exists a Stein structure on this 4-manifold (by \cite{EliashbergStein}, \cite{GompfStein}). Replacing the dotted circle in figure \ref{fig:3BStep6} with two attaching balls, and keeping track of framings carefully, after isotopies we can achieve a diagram where the 2-handles are attached along Legendrian knots with framing coefficients given by $tb-1$ in figure \ref{fig:3BStep7}.
 \begin{figure}
 \begin{center}
 \includegraphics[scale=1.4]{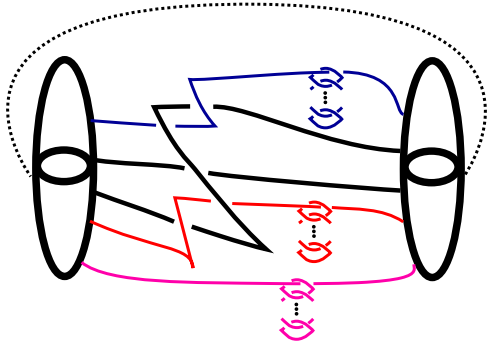}
 \end{center}
 \caption{Stein handlebody for figure \ref{fig:3BStep6}. Framings on 2-handles are given by 1 less than the Thurston-Bennequin number of the attaching circle.}
 \label{fig:3BStep7}
 \end{figure}
To verify that the contact structure is correct, we look at the classification of contact structures on these Seifert fibered spaces. In these cases, there are three distinct contact structures which are distinguished from each other by their Euler class. We can compute the Euler class of the induced contact structure on the boundary of a Stein manifold in a standard way involving rotation numbers of the attaching circles (see \cite{GompfStein}). To check this matches the Euler class of the contact manifold we started with, track $PD(e(\xi))$ through a diffeomorphism taking this diagram representing the 3-manifold to the standard one as the boundary of a star-shaped plumbing of spheres.

A third way to see the convex symplectic structure on this manifold is to view it as a Lefschetz fibration over a disk. An isotopy of the diagram in figure \ref{fig:3BStep4}, gives figure \ref{fig:3BLefschetz}. We claim this diagram represents Lefschetz fibrations which induces an open book decomposition on its boundary that support the contact structure $\xi_{pl}$. The Lefschetz fibration will have base $D^2$ and regular fibers $3$-hold disks, $D_3=D^2\setminus \{N(p_1),N(p_2) , N(p_3)\}$ where $\{N(p_i)\}$ are disjoint neighborhoods of points contained in the interior of $D^2$. We obtain a handlebody diagram for $D^2\times D_3$ from the $3$ disjoint parallel dotted circles, because one can think of a dotted circle as the removal of a 2-handle from the interior of the 0-handle. If we view the 0-handle as $D^2\times D^2$, we can view the dotted circles as removing a neighborhood of $D^2\times \{p_i\}$. Then we attach the $-1$ framed 2-handles along the boundary of $D^2\times D_3$. We can see a trivial open book decomposition with pages $D_3$ on $\bdry(D^2\times D_3)=S^1\times D_3\cup D^2\times[\bdry D^2\sqcup \bdry N(p_1) \sqcup N(p_2) \sqcup \bdry N(p_3)]$. Note that each attaching circles of a 2-handles lies in a page of this trivial open book and the Seifert framing in the handlebody diagram agrees with the page framing coming from this open book decomposition. Therefore the framing on the 2-handles is $-1$ relative to the page framing, so the attaching circles are vanishing cycles in a Lefschetz fibration.

It is useful to understand the open book decomposition supporting $\xi_{pl}$ given to us by theorem \ref{thmplumbingconvex}. The construction of Gay and Mark tells us that the pages of the open book are given by the surface obtained by connect summing $|s_j|$ copies of $D^2$ to each sphere $C_j$ in our graph, and then connect summing these surfaces according to the plumbing graph. In our case, the central sphere $C_0$ has $s_0=-4+3=-1$,  the spheres in the arms but not on the ends have $s_j=-2+2=0$, and the spheres on the ends have $s_j=-2+1=-1$. Therefore the pages are surfaces as in figure \ref{fig:page4}. The monodromy is given by a product of positive Dehn twists about the simple closed curves around each connect sum neck, shown as the blue curves in figure \ref{fig:page4}. Note that the order of these Dehn twists does not matter, because the curves are all disjoint from each other so the corresponding Dehn twists commute in the mapping class group.

\begin{figure}
\includegraphics[scale=.4]{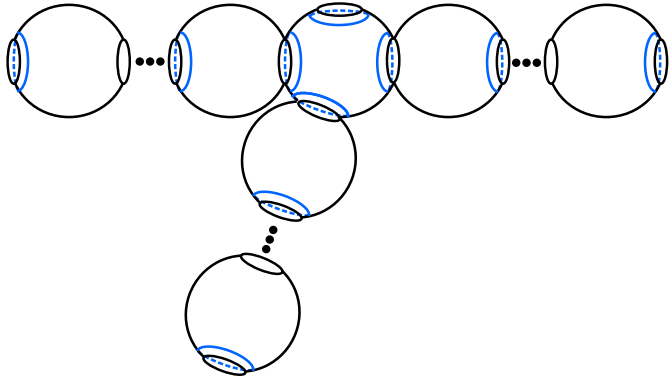}
\caption{A page of the open book decomposition supporting $\xi_{pl}$ given by \cite{GayMark}. A product of Dehn twists about the blue curves gives the monodromy.}
\label{fig:page4}
\end{figure}

\begin{figure}
\centering
\subfloat[Lefschetz fibration]{
\psfragscanon
\psfrag{A}{$n_1-1$}
\psfrag{B}{$n_2-1$}
\psfrag{C}{$n_3-1$}
\psfrag{P}{$-1$}
\includegraphics[scale=.4]{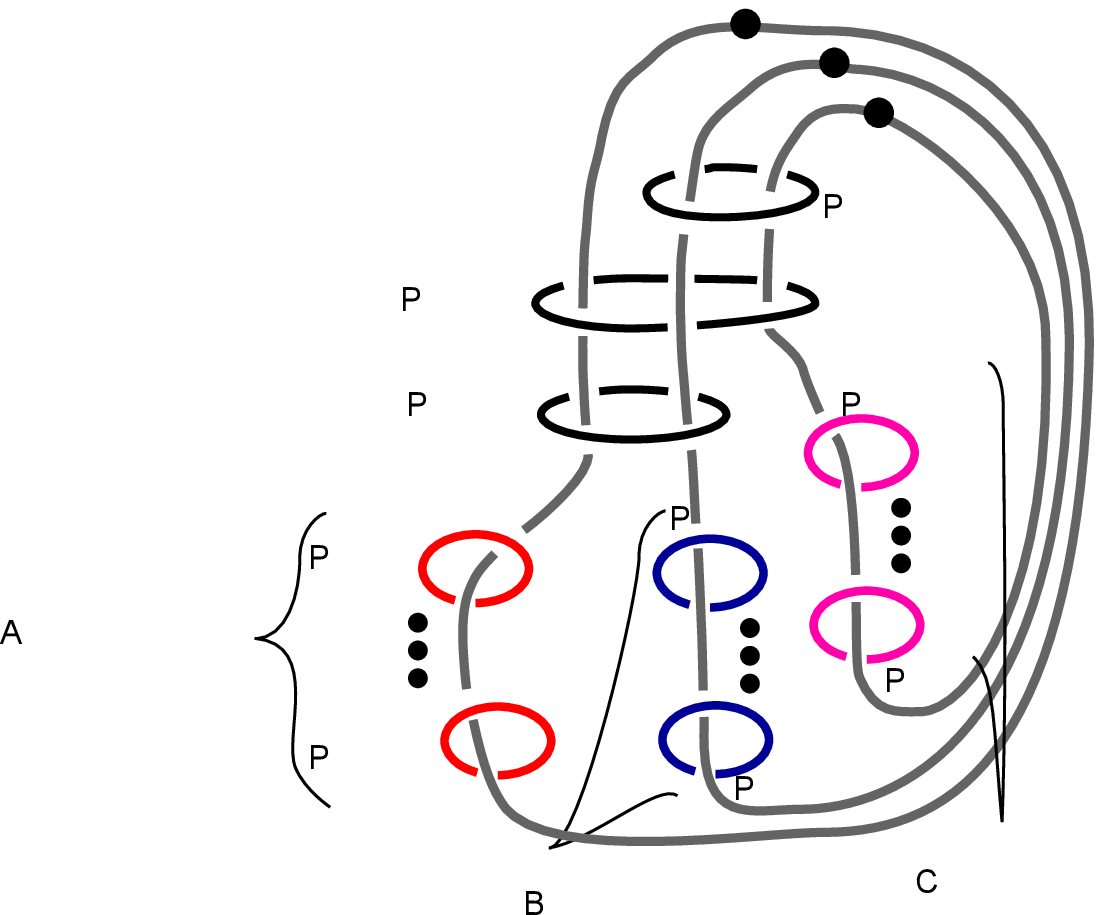}
\psfragscanoff
\label{fig:3BLefschetz}
}
\;\;\;\;\;\subfloat[Open book decomposition]{
\includegraphics[scale=.3]{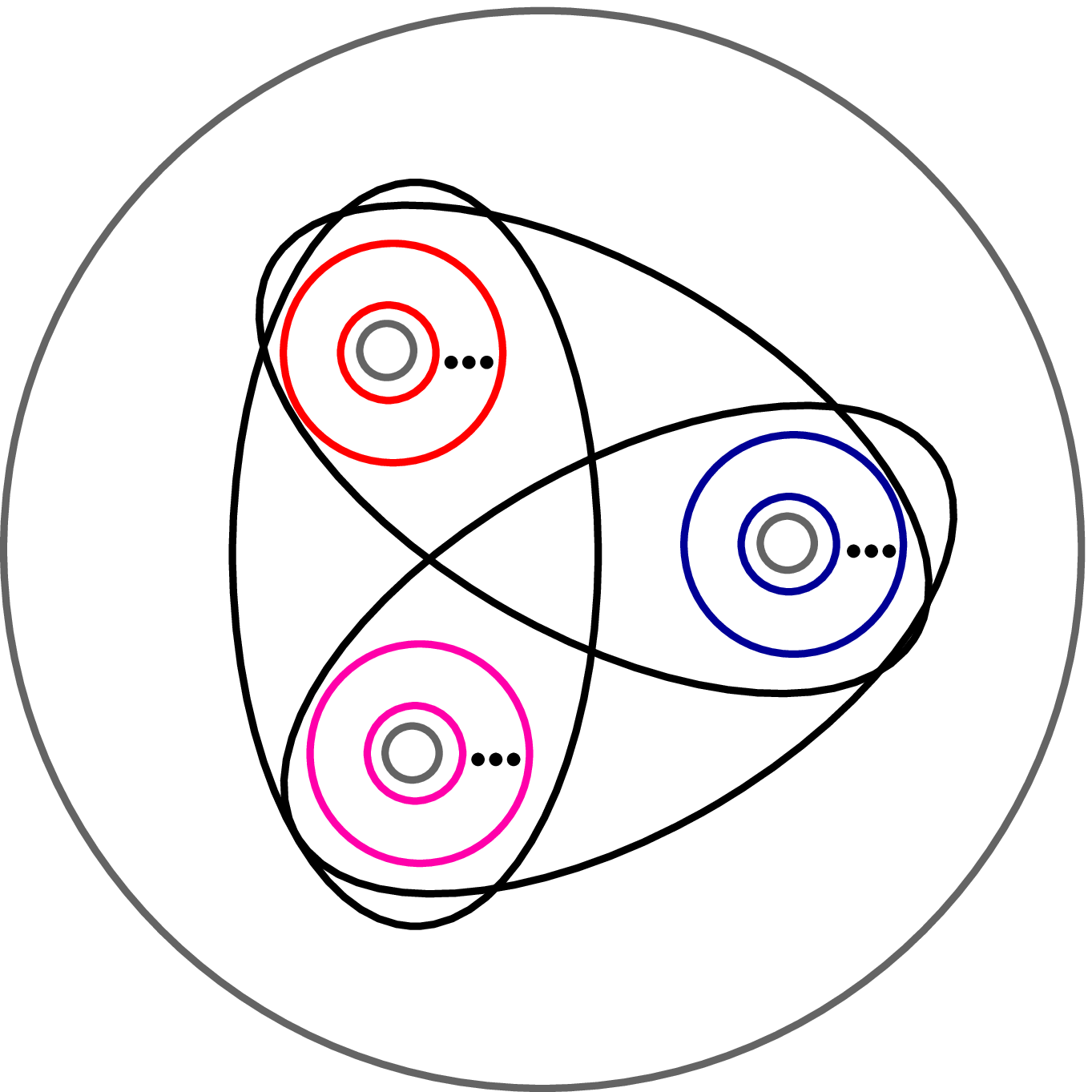}
\label{fig:3BOpenBook}
}
\;\;\;\;\;\subfloat[Equivalent open book]{
\includegraphics[scale=.3]{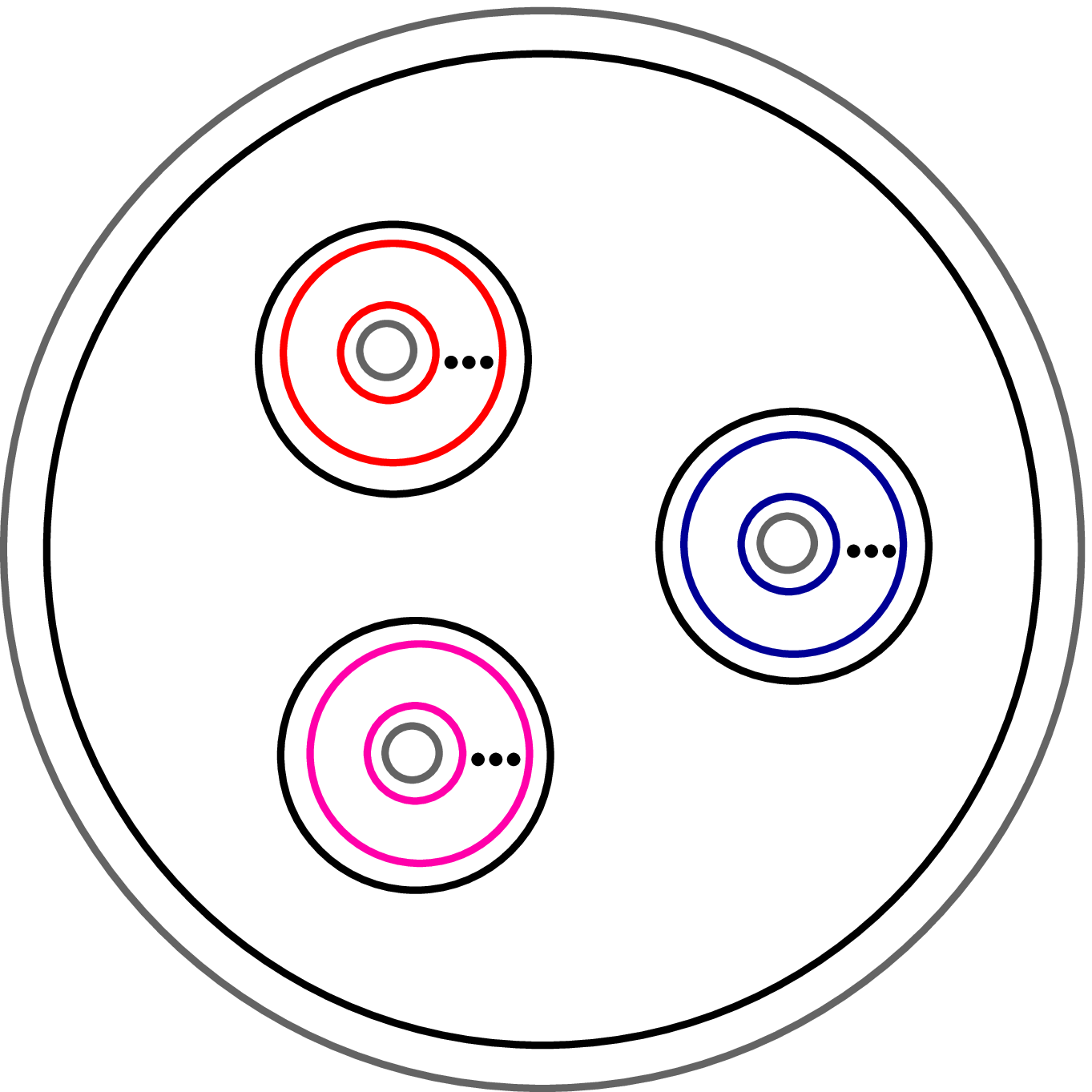}
\label{fig:3BOpenBook2}
}
\caption{The Lefschetz fibration is visible in this handlebody description of the second symplectic filling, and the open book decomposition induced on the boundary has monodromy given by a product of positive Dehn twists about the curves pictured here.}
\label{fig:3BLefschetzOBD}
\end{figure}

On the other hand, our Lefschetz fibration induces an open book decomposition on its boundary whose pages are disks with three holes and whose monodromy is a product of positive Dehn twists about the vanishing cycles ordered starting at the bottom of the picture to the top, as in figure \ref{fig:3BOpenBook}. This monodromy is equivalent by a lantern relation to positive Dehn twists about the curves in figure \ref{fig:3BOpenBook2}, which is equivalent to the open book decomposition determined by figure \ref{fig:page4} which we know supports $\xi_{pl}$. Therefore our filling has the structure of a Lefschetz fibration which induces an open book decomposition on its boundary that supports the contact structure $\xi_{pl}$ we are interested in.

A Lefschetz fibration is \emph{allowable} if the vanishing cycles are non-zero in the homology of the page. It was shown in \cite{AkbulutOzbagci1} and \cite{LoiPiergallini} that an allowable Lefschetz fibration admits a Stein structure, and verified in \cite{Plamenevskaya} that the contact structure induced on the boundary is supported by an open book decomposition whose pages are diffeomorphic to the regular fibers of the Lefschetz fibration and whose monodromy is a product of Dehn twists about the vanishing cycles.
%
Since the vanishing cycles in this Lefschetz fibration are homologically essential, this 4-manifold supports a Stein structure, inducing $\xi_{pl}$ on the boundary. The Stein structure induces a convex symplectic structure on this filling.

This proves (in multiple ways) part (1) of theorem \ref{thm:easyexamples}.
}

\section{Classifications given by rational blow-downs}
{\label{rationalclassifications}
\subsection{A simple family with $e_0\leq -k-3$}
{
\label{sec:e0neg}
We can provide a complete classification for a similar family to our simplest family in the case where $e_0$ is sufficiently negative.
\begin{proof}[Proof of theorem \ref{thm:e0neg}]
When the central vertex on our graph is labeled by $e_0\leq -k-3$ where $k$ is the number of arms in our graph, the dual graph will have $d=-e_0-1$ arms, where $d-k$ of these arms are made up of a single symplectic sphere of square $-1$. If we assume all the arms of the dual graph have length one, the possible homology classes each dual graph sphere can represent is completely determined by lemma \ref{lem:e0neg}. The graphs that these correspond to have central vertex with coefficient $e_0$ and $k$ arms, each made up of some number of spheres of square $-2$ as in figure \ref{fig:GraphDualGraphFamilye0neg}.
\begin{figure}
	\centering
	\subfloat[Graphs]{
	\label{fig:GraphFamilye0neg}
	\psfragscanon
	\psfrag{A}{$n_1-1$}
	\psfrag{B}{$n_2-1$}
	\psfrag{C}{$n_k-1$}
	\psfrag{D}{$-2$}
	\psfrag{E}{$e_0$}
	 \includegraphics[scale=.6]{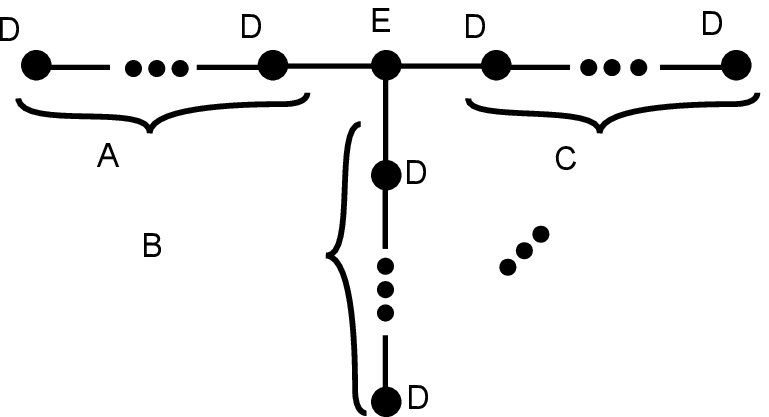}
	 \psfragscanoff
	 }
	 \;\;\;\;\;\;\;\;\;\;\;\;\subfloat[Dual graphs]{
	 \label{fig:DualGraphFamilye0neg}
	 \psfragscanon
	 \psfrag{A}{$-n_1$}
	 \psfrag{B}{$-n_2$}
	 \psfrag{C}{$-n_k$}
	 \psfrag{D}{$-1$}
	 \psfrag{E}{$+1$}
	\includegraphics[scale=.6]{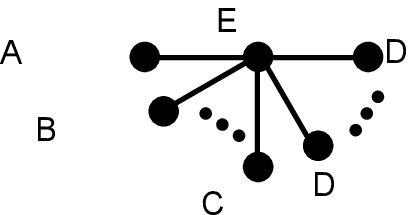}
	\psfragscanoff
	}
\caption{A simple family of graphs and their dual graphs ($e_0\leq -k-3$ and $n_1,n_2,\cdots, n_k \geq 2)$.}
\label{fig:GraphDualGraphFamilye0neg}
\end{figure}

When $\max\{n_1+1,\cdots , n_k+1\} < d-1=-e_0-2$, only one of the homology representations in lemma \ref{lem:e0neg} is possible and this corresponds to the diffeomorphism type of the original plumbing of disk bundles over spheres. When $\max\{n_1+1,\cdots , n_k+1\}\geq d-1=-e_0-2$, we have other diffeomorphism types obtained from the original plumbing of disk bundles over spheres by a rational blow-down of a linear subgraph consisting of the central vertex and the next $-e_0-4$ spheres of square $-2$ in one of the arms. Such rational blow-downs were shown to be symplectic operations by Symington in \cite{Symington1}. Since we can perform this operation on the interior of the filling, this will not change the convex symplectic structure near the boundary. Therefore we can realize all these diffeomorphism types as convex symplectic fillings.
\end{proof}
}

\subsection{A simple family whose dual graphs have long arms}
{
In the first example, there were only four spheres in the dual graph to keep track of, which restricted the number of different ways of writing their homology classes. Another restrictive condition on the possibilities for homology of the dual graph is to require that the spheres have small self-intersection numbers. If we look at three-armed dual graphs where every sphere except the central vertex has self-intersection number $-2$ as in figure \ref{fig:DualGraphFamilyB}, we can understand fillings of Seifert fibered spaces $Y(-4;-n_1,-n_2,-n_3)$ that bound plumbings according to the graphs in figure \ref{fig:GraphFamilyB}. Note that when $n_1=n_2=n_3=2$, we are back in example 1. Therefore we can assume at least one of the arms in the dual graph has length at least 2.

\begin{figure}
	\centering
	\subfloat[Graphs]{
	\label{fig:GraphFamilyB}
	\psfragscanon
	\psfrag{A}{$-n_1$}
	\psfrag{B}{$-n_2$}
	\psfrag{C}{$-n_3$}
	\psfrag{D}{$-4$}
	 \includegraphics[scale=.6]{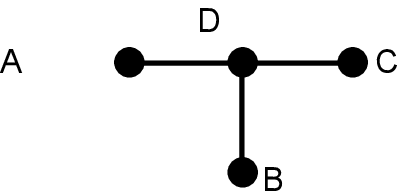}
	 \psfragscanoff
	 }
	 \;\;\;\;\;\;\;\;\;\;\;\;\subfloat[Dual graphs]{
	 \label{fig:DualGraphFamilyB}
	 \psfragscanon
	 \psfrag{A}{$n_1-1$}
	 \psfrag{B}{$n_2-1$}
	 \psfrag{C}{$n_3-1$}
	 \psfrag{D}{$-2$}
	 \psfrag{E}{$+1$}
	\includegraphics[scale=.6]{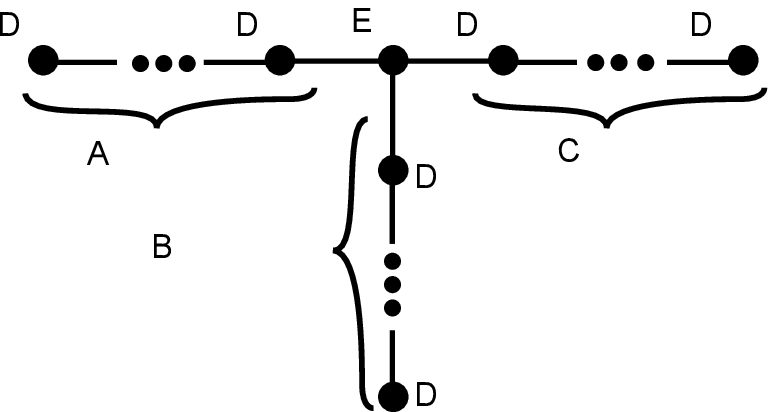}
	\psfragscanoff
	}
\caption{Another simple family of graphs and their dual graphs $(n_1,n_2,n_3\geq 1)$.}
\label{fig:GraphDualGraphFamilyB}
\end{figure}
 
 \begin{proof}[Proof of theorem \ref{thm:easyexamples} part (2)]
 
We consider what each sphere in the dual graph can represent in $H_2(X_M;\Z)$. The central vertex, and its adjacent vertices must represent one of two possible homology choices, as in the previous example. Let $C_0$ represent the central vertex, and $C_1$, $C_2$, and $C_3$ represent the adjacent vertices.
$$\begin{array}{rcl|rcl}
 &&\text{Case A} & &&\text{Case B}\\\hline
\left[C_0\right]&=&\ell&[C_0]&=&\ell\\
\left[C_1\right] &=& \ell -e_1-e_2-e_3&\left[C_1\right] &=& \ell -e_1-e_2-e_4\\
\left[C_2\right] &=& \ell - e_1-e_4-e_5&\left[C_2\right] &=& \ell - e_1-e_3-e_5\\
\left[C_3\right] &=& \ell -e_1-e_6-e_7&\left[C_3\right] &=& \ell -e_2-e_3-e_6\\
\end{array}$$

The remaining spheres in the concave cap have the form $e_{i_1}-e_{i_2}$. Suppose $C_4$ is adjacent to $C_1$. Then since $[C_1]\cdot[C_4]=1$, and $[C_j]\cdot[C_4]=0$ for $j=0,2,3$, one of the $e_i$ which has $-1$ coefficient in $[C_1]$ must have $+1$ coefficient in $[C_4]$. In case A, up to relabeling, this determines $[C_4]=e_2-e_8$ (if $e_1$ appeared in $[C_4]$ it would be impossible to cancel the algebraic intersection of $C_4$ with both $C_2$ and $C_3$ since only two $e_i$'s can appear with nonzero coefficient in $[C_4]$). Continuing with the assumption that we are in case A for now, if $C_5$ is adjacent to $C_4$ then the intersection relations imply $[C_5]=e_8-e_9$ or $[C_5]=e_3-e_2$. However if there is another sphere $C_6$ adjacent to $C_5$ then we cannot have $[C_5]=e_3-e_2$, because there is no way to write $[C_6]=e_{i_1}-e_{i_2}$ such that $[C_6]\cdot (e_3-e_2)=1$ and $[C_6]\cdot [C_1]=0$. The homology of the spheres in the other arms is determined independently in the same way. Therefore if we are in case A, and each arm in the dual graph has length at least four (i.e. $n_1,n_2,n_3>4$), the homology of the spheres is unique up to relabeling the $e_i$.

If the first four spheres are configured as in case B, and $C_4$ is adjacent to $C_1$, then the intersection relations imply $[C_4]=e_4-e_7$ or $[C_4]=e_1-e_5$ (up to symmetric relabeling). If $C_5$ is adjacent to $C_4$, then we cannot have $[C_4]= e_1-e_5$. This is because it is not possible to find $[C_5]=e_{i_1}-e_{i_2}$ such that $[C_5]\cdot (e_1-e_5)=1$, $[C_5]\cdot [C_2]=0$, and $[C_5]\cdot [C_3]=0$. Therefore if $C_5$ is adjacent to $C_4$, $[C_4]=e_4-e_7$. Furthermore $[C_5]=e_7-e_8$, since if $[C_5]=e_i-e_4$ it is not possible to ensure $[C_5]\cdot[C_1]=0$ and $[C_5]\cdot [C_2]=[C_5]\cdot[C_3]=0$ simultaneously. In conclusion, if then lengths of all of the arms in the dual graph is at least three, (i.e. $n_1,n_2,n_3>3$), the homology of all of the spheres are determined, up to obvious symmetries, by the choice that the first four spheres are as in case B.

This implies that when $n_1,n_2,n_3>4$, there are at most two diffeomorphism types of strong symplectic fillings of the Seifert fibered space arising as the boundary of the plumbing in figure \ref{fig:GraphFamilyB}. It is clear from the previous analysis in section \ref{simplestexamples} that the diffeomorphism types obtained from case A and case B differ by a rational blow-down of the central $-4$ sphere. Furthermore, we know that the original symplectic plumbing and the rational blow-down of the $-4$ sphere provide two non-diffeomorphic symplectic fillings.
\end{proof}
}

\subsection{A special case}
{
When some of $n_1,n_2,$ and $n_3$ take values $3$ or $4$, some more interesting fillings can appear. The most interesting case is when $n_1=n_2=n_3=3$, which we discuss here. (When some of the $n_j=4$, one can perform rational blow-downs of any disjoint collection of $-4$ spheres. If only one or two of the $n_j=3$, nothing new appears.)

 \begin{proof}[Proof of theorem \ref{thm:4333}]
When $n_1=n_2=n_3=3$, each arm in the dual graph has length $2$. Following the analysis of the previous proof, the four spheres $C_0,C_1,C_2,C_3$ represent in homology one of two possibilities refered to as case A and case B. There are three more spheres in the dual graph, $C_4,C_5,C_6$ adjacent to $C_1,C_2,C_3$ respectively. As described above, if the first four spheres represent the homology in case A, the homology classes of $C_4,C_5,C_6$ are determined up to symmetries. The resulting embedding determined by this homology will have complement diffeomorphic to the original plumbing of spheres. 

If the first four spheres are configured as in case B, there are two possibilities for each adjacent sphere. However, because $[C_4]\cdot [C_5]=[C_5]\cdot[C_6]=[C_4]\cdot[C_6]=0$, these choices cannot be made independently of each other. Up to permutation of the indices there are only two possibilities if the first four spheres represent the homology as in case B, given as follows.
$$\begin{array}{rcl|rcl}
\left[C_4\right] &=& e_4-e_7&\left[C_4\right] &=& e_1-e_5\\
\left[C_5\right] &=& e_5-e_8&\left[C_5\right] &=& e_3-e_6\\
\left[C_6\right] &=& e_6-e_9&\left[C_6\right] &=& e_2-e_4\\
\end{array}$$
The first of these gives rise to the rational blow-down of the $-4$ sphere in the original plumbing. The second gives rise to a rational blow-down of the entire original plumbing. The embedding of the concave cap into $\C P^2\#6\overline{\C P^2}$ is visible in figure \ref{fig:3334Step1}. By turning the complement upside down, one obtains a manifold built from a 0-handle, three 1-handles, and three 2-handles, as in figure \ref{fig:3334Step2}. One can cancel two of the 1-handles with two of the 2-handles, and the remaining 2-handle links nine times with the 1-handle, and is thus a diagram for a rational homology ball.

\begin{figure}
	\centering
	\subfloat[Embedding into $\C P^2\#6\overline{\C P^2}$]{
	\label{fig:3334Step1}
	\psfragscanon
	\psfrag{T}{3 $3$-h, 1 $4$-h}
	\psfrag{X}{$-2$}
	\psfrag{Y}{$-1$}
	\psfrag{Z}{$+1$}
	\includegraphics[scale=.5]{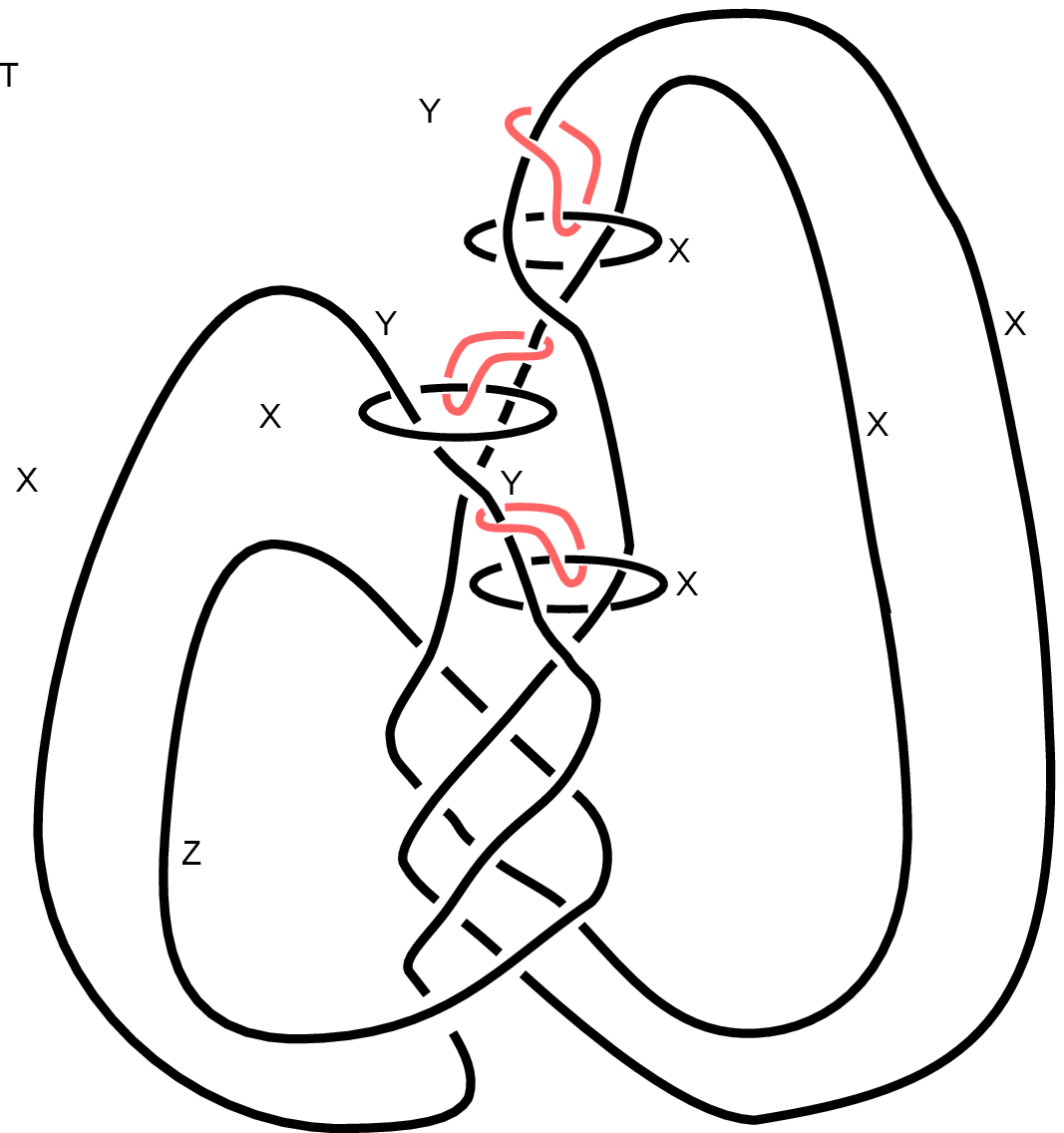}
	\psfragscanoff
	}
	\;\;\;\;\;\;\subfloat[Upside down complement of dual configuration]{
	\label{fig:3334Step2}
	\psfragscanon
	\psfrag{A}{$-1$}
	\includegraphics[scale=.5]{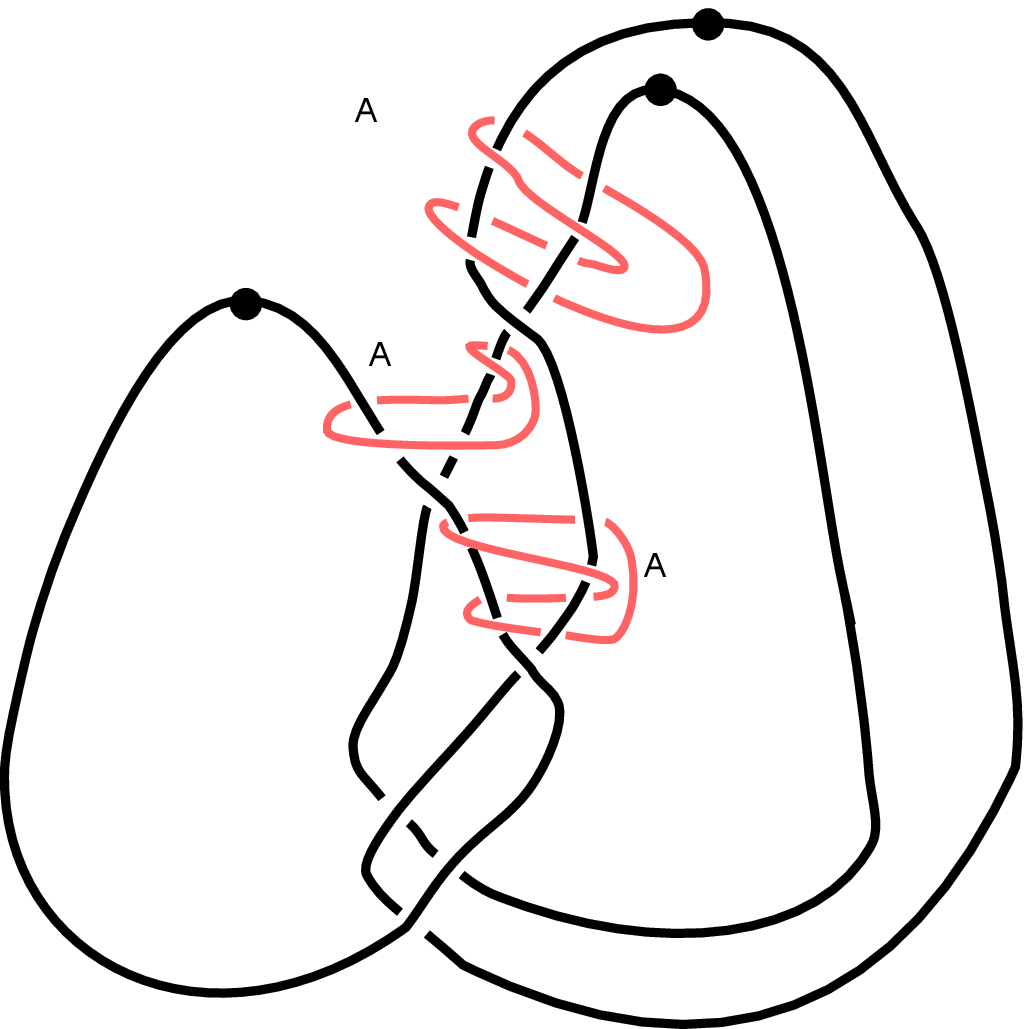}
	\psfragscanoff
	}
\caption{The embedding of the concave cap and its complement which is a rational homology ball.}
\label{fig:3334}
\end{figure}

It is known that a smoothing of a complex singularity gives this rational homology ball whose boundary is $Y(-4;-3,-3,-3)$. The complex structure gives a convex symplectic structure to this smooth manifold, filling the same contact structure $\xi_{pl}$ (by \cite{StipsiczSzaboWahl} and \cite{ParkStipsicz}). Therefore all three of these diffeomorphism types must be strong symplectic fillings of $(Y(-4;-3,-3,-3),\xi_{pl})$.
\end{proof}
}

\subsection{The family $\mathcal{W}_{p,q,r}$}
{
The special case in the previous example generalizes to a family of dually positive symplectic plumbings of spheres that can be completely rationally blown down, given by the graphs in figure \ref{fig:GraphExamplesWpqr}. This is the only family of dually positive graphs which has a symplectic rational blowdown (of the entire configuration) due to the classifications in \cite{BhupalStipsicz} and \cite{StipsiczSzaboWahl}.

\begin{figure}
	\centering
	\subfloat[Graphs $\mathcal{W}_{p,q,r}$]{
	\label{fig:GraphExamplesWpqr}
	\psfragscanon
	\psfrag{A}{$p$}
	\psfrag{B}{$q$}
	\psfrag{C}{$r$}
	\psfrag{D}{$-p-3$}
	\psfrag{E}{$-q-3$}
	\psfrag{F}{$-r-3$}
	\psfrag{G}{$-2$}
	\psfrag{H}{$-4$}
	\includegraphics[scale=.5]{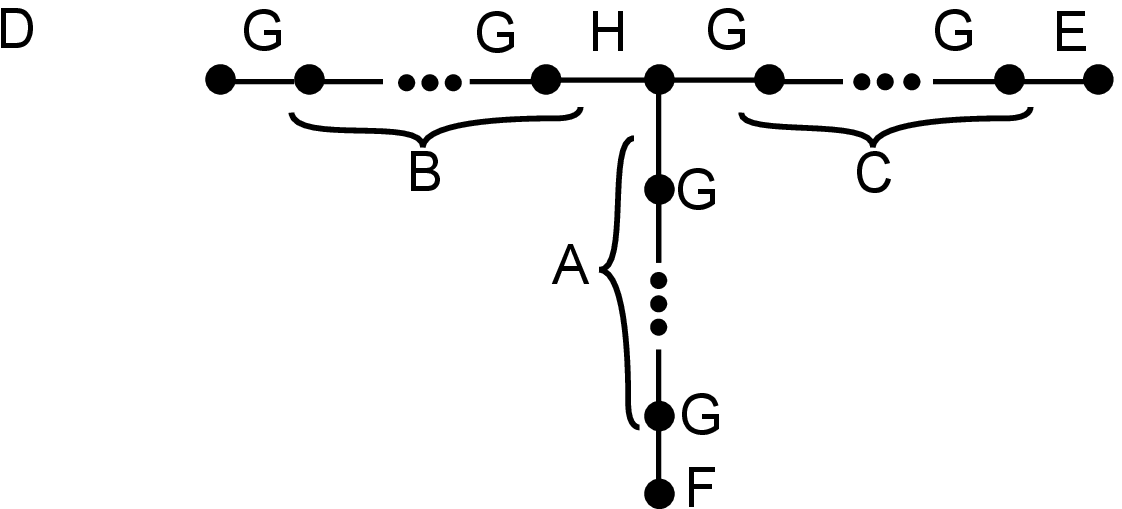}
	\psfragscanoff
	}
	\;\;\;\;\;\;\;\;\subfloat[Dual graphs]{
	\label{fig:DualGraphExamplesWpqr}
	\psfragscanon
	\psfrag{A}{$p+1$}
	\psfrag{B}{$q+1$}
	\psfrag{C}{$r+1$}
	\psfrag{D}{$-p-2$}
	\psfrag{E}{$-q-2$}
	\psfrag{F}{$-r-2$}
	\psfrag{G}{$-2$}
	\psfrag{H}{$1$}
	\includegraphics[scale=.5]{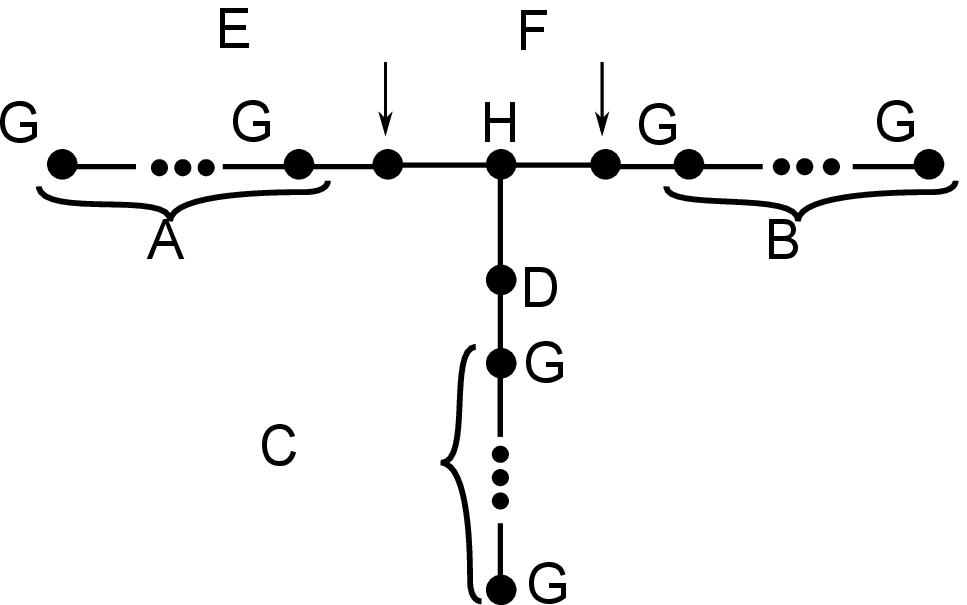}
	\psfragscanoff
	}
\caption{$\mathcal{W}_{p,q,r}$ and dual graphs.}
\label{fig:GraphDualGraphWpqr}
\end{figure}

We can classify the convex symplectic fillings completely for these graphs. Let $Y(\mathcal{W}_{p,q,r})$  denote the boundary of the plumbing of spheres according to the graph $\mathcal{W}_{p,q,r}$.
\begin{theorem}
The convex symplectic fillings of $(Y(\mathcal{W}_{p,q,r}),\xi_{pl})$ are of the following diffeomorphism types:
\begin{enumerate}
\item The original symplectic plumbing of spheres according to the graph $\mathcal{W}_{p,q,r}$.
\item A rational blow-down of the central $-4$ sphere in the original plumbing of spheres.
\item A rational blow-down of a subset of the spheres in the first arm, the first with square $-p-3$ and the next $(p-1)$ spheres with square $-2$, (\emph{assuming $p-1\leq q$}).
\item A rational blow-down of a subset of the spheres in the second arm, the first with square $-r-3$ and the next $(r-1)$ spheres with square $-2$, (\emph{assuming $r-1\leq p$}).
\item A rational blow-down of a subset of the spheres in the third arm, the first with square $-q-3$ and the next $(q-1)$ spheres with square $-2$, (\emph{assuming $q-1\leq r$}).
\item Any combination of (3),(4), and/or (5) assuming all the necessary hypotheses given above on $p,q,r$ are met. Also, any combination of (3),(4), and (5) with (2), but in that case we require the stronger conditions on (3),(4), and (5) that $p\leq q$, $r\leq p$ and $q\leq r$ respectively (this ensures the rational blow-downs can all be done disjointly).
\item A rational blow-down of the entire graph.
\end{enumerate}
\label{thm:Wpqr}
\end{theorem}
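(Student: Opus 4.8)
The plan is to run the machine of Section \ref{mainargument} on the dual graphs of Figure \ref{fig:DualGraphExamplesWpqr}, exactly as was done for the $4333$ graph in the proof of Theorem \ref{thm:4333}, but now tracking the extra length in the arms. Since $k=3$ and the central coefficient is $e_0=-4=-k-1$, the dual graph has $d=-e_0-1=3$ arms and no short arms, so after blowing down to $\C P^2$ the images $C_0,C_1,C_2,C_3$ of the central vertex and its three neighbors form one of the intersection configurations $\mathcal{I}_1^3$ or $\mathcal{I}_2^3$. These are covered by Lemmas \ref{lemma:4spherehtpy} and \ref{lemma:configconnected}, so every admissible homology representation of the dual configuration determines a unique isotopy class of embedded regular neighborhood, and hence a single diffeomorphism type of complement. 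Thus the problem reduces to (i) enumerating the admissible homology representations of the dual spheres, (ii) identifying the complement of each resulting embedding by Kirby calculus and matching it to a rational blow-down, and (iii) realizing each listed diffeomorphism type as an honest convex filling.

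For (i), I would set $[C_0]=\ell$ and, by Lemma \ref{lem:homologyint}, split into Case A (the three neighbors $C_1,C_2,C_3$ share a common $e_i$, giving $\mathcal{I}_1^3$) and Case B (each of the three pairs shares a distinct $e_i$, giving $\mathcal{I}_2^3$), just as in Section \ref{simplestexamples} and the proof of Theorem \ref{thm:4333}. I would then propagate the constraints $[C_a]\cdot[C_b]\in\{0,1\}$ outward along each arm. The spheres of square $-2$ contribute classes of the form $e_a-e_b$, and at each step along an arm the intersection relations leave only finitely many discrete choices: one choice continues the pattern of the original plumbing, while the others initiate a rational blow-down by recycling an exceptional class introduced nearer the center. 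In Case A the propagation is forced and yields the original plumbing. In Case B the cascade of shared classes is cyclic around the three arms, and the requirement that each recycled $e_i$ be distinct from those already used is precisely what produces the numerical hypotheses $p-1\le q$, $r-1\le p$, $q-1\le r$ (and the strengthened cyclic conditions $p\le q$, $r\le p$, $q\le r$ when one simultaneously recycles the central class, forcing disjointness of all the blow-down regions).

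For (ii) and (iii), each admissible representation is converted into a Kirby diagram of an embedding into $\C P^2\#M\overline{\C P^2}$ and the complement is turned upside down exactly as in Section \ref{simplestexamples} and the proof of Theorem \ref{thm:4333}; handle slides and $1$--$2$ cancellations then exhibit the complement as the plumbing with the appropriate subgraph replaced by a rational homology ball. To see these are genuinely fillable, the original plumbing is convex by Theorem \ref{thmplumbingconvex}; the rational blow-down of the central $-4$ sphere and of the linear chains $[-p-3,-2,\dots,-2]$ in the arms are symplectic operations performed in the interior, away from the convex boundary, by Symington \cite{Symington1}, so they preserve $\xi_{pl}$; the combinations in (6) are realized once the regions are disjoint; and the full rational blow-down (7) is the smoothing of the corresponding $\mathcal{W}_{p,q,r}$ singularity, which carries a Stein, hence convex, structure filling $\xi_{pl}$ by \cite{StipsiczSzaboWahl} and \cite{ParkStipsicz}. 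Matching the upper and lower bounds shows the list is exactly (1)--(7).

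The main obstacle I expect is the combinatorial completeness of step (i): showing that the propagation down three arms of unequal length, with the heavier first vertices of square $-p-3$, $-q-3$, and $-r-3$, produces exactly the listed family of representations --- no spurious extra types and none missing --- and that the availability-of-distinct-$e_i$ conditions translate into precisely the cyclic inequalities stated (and their strengthenings for simultaneous blow-downs). Once this bookkeeping is pinned down, steps (ii) and (iii) are routine applications of the Kirby-calculus identifications already carried out for the $4333$ example and of the symplectic rational-blow-down results cited above.
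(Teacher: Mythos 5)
Your overall framework coincides with the paper's: since $k=3$ and $e_0=-4$, the dual graph has $d=3$ arms, the blown-down configurations are $\mathcal{I}_1^3$ and $\mathcal{I}_2^3$, Lemmas \ref{lemma:4spherehtpy} and \ref{lemma:configconnected} give uniqueness of the embedding for each homology representation, complements are identified by Kirby calculus, and realization comes from Symington's symplectic rational blow-down plus the Stipsicz--Szab\'{o}--Wahl smoothing for item (7). However, your sketch of the enumeration --- the step you yourself flag as the crux --- is wrong in a way that would produce an incorrect list. You assert that in Case A the propagation along the arms is forced and yields only the original plumbing, and that the arm blow-downs with hypotheses $p-1\le q$, $r-1\le p$, $q-1\le r$ arise inside Case B. The correct bookkeeping is the reverse. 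In Case A one has $[C_1]=\ell-e_1^0-e_1^1-\cdots-e_{q+2}^1$, so $[C_1]$ carries $q+2$ basis elements not shared with $[C_2]$ or $[C_3]$, and the string of $p+1$ spheres of square $-2$ attached to $C_1$ admits a second representation ascending through those private classes ($[C_2^1]=e_2^1-e_1^1$, \dots, $[C_{p+1}^1]=e_{p+1}^1-e_p^1$) exactly when $p+1\le q+2$, i.e.\ $p-1\le q$. These Case A representations, with the central sphere untouched, are what produce items (3)--(5) and their combinations not involving (2). In Case B, $[C_1]=\ell-e_1^0-e_2^0-e_1^1-\cdots-e_{q+1}^1$ has only $q+1$ private classes, so the same choice requires the stronger $p+1\le q+1$, i.e.\ $p\le q$; those representations give the combinations with (2) in item (6). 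So the weak-versus-strong inequality dichotomy is precisely Case A versus Case B, not a distinction internal to Case B.

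As written, your enumeration cannot establish the theorem: every Case B representation has complement differing from the plumbing by (at least) the central $-4$ blow-down, so if the weak inequalities lived in Case B there would be no representation at all whose complement realizes (3)--(5) alone, contradicting the very list you are proving. Separately, your sketch does not clearly isolate the one additional representation that exists only in Case B, in which the first sphere of each arm recycles one of the shared classes $e_1^0,e_2^0,e_3^0$ and then descends through the private classes of a \emph{different} neighbor (e.g.\ $[C_1^1]=e_1^0-e_1^2$, $[C_2^1]=e_1^2-e_2^2$, \dots); this cyclic representation exists with no inequality hypotheses, its complement is the rational homology ball of item (7), and without it the count of representations does not match the count of fillings. (You will also want to state explicitly that the listed fillings are pairwise non-diffeomorphic, e.g.\ via intersection forms, so that the upper and lower bounds actually match.) These errors are fixable --- the machinery you invoke is the right one --- but the combinatorial core of the argument must be redone as above.
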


\begin{proof}
First note that all the above diffeomorphism types are realized as convex symplectic fillings of $\xi_{pl}$ because these rational blow-downs are known to be symplectic operations by \cite{Symington1}, \cite{StipsiczSzaboWahl}. Furthermore, all of these rational blow-downs produce non-diffeomorphic manifolds  which can be distinguished by their intersection forms (except when there are obvious symmetries of the three arms). Therefore it suffices to provide an upper bound on the number of convex fillings which matches the number of diffeomorphism types provided in the statement of the theorem.

By section \ref{mainargument}, an upper bound is given by the number of ways to represent the homology classes of the spheres in the dual graph in terms of a standard basis for $H_2(\C P^2\# M\overline{\C P^2})$ of the form given in section \ref{sec:homology}. The dual graph for $\mathcal{W}_{p,q,r}$ is given in figure \ref{fig:DualGraphExamplesWpqr}.

Denote the sphere representing the central vertex by $C_0$, the three spheres intersecting $C_0$ by $C_1,C_2,C_3$. Denote the string of spheres of square $-2$ whose first sphere is adjacent to $C_j$ by $C_1^j,\cdots , C_{p+1}^j$ for $j=1,2,3$. There are two possibilities for the homology classes of the first four spheres, as in previous computations. Throughout this computation, all $e_i^n$ will be distinct basis elements for $H_2(\C P^2\#M\overline{\C P^2};\Z)$ of square $-1$.
$$\begin{array}{rcl|rcl}
&&\text{Case A} & &&\text{Case B}\\\hline
\left[C_0\right]&=&\ell& \left[C_0\right]&=&\ell\\
\left[C_1\right] &=& \ell -e_1^0-e_1^1-\cdots -e_{q+2}^1&\left[C_1\right] &=& \ell -e_1^0-e_2^0-e_1^1-\cdots -e_{q+1}^1\\
\left[C_2\right] &=& \ell -e_1^0-e_1^2-\cdots -e_{p+2}^2&\left[C_2\right] &=& \ell -e_1^0-e_3^0-e_1^2-\cdots -e_{p+1}^2\\
\left[C_3\right] &=& \ell -e_1^0-e_1^3-\cdots -e_{r+2}^3&\left[C_3\right] &=& \ell -e_2^0-e_3^0-e_1^3-\cdots -e_{r+1}^3\\
\end{array}$$
The remaining strings of spheres each have two possible configurations that can occur in either case A or B. They can each occur independently of each other, but one of each of these two choices requires some inequality between $p,q,r$ to be true. The configurations are:
$$\begin{array} {rcl|rcl|rcl}
\left[C_1^1\right] &=& e_1^1-e_1^4 & \left[C_1^2\right] &=& e_1^2-e_1^5 & \left[C_1^3\right] &=& e_1^3-e_1^6\\
\left[C_2^1\right] &=& e_1^4-e_2^4 & \left[C_2^2\right] &=& e_1^5-e_2^5 & \left[C_2^3\right] &=& e_1^6-e_2^6\\
&\vdots& &&\vdots & &\vdots &\\
\left[C_{p+1}^1\right] &=& e_p^4-e_{p+1}^4 &\left[C_{r+1}^2\right] &=& e_r^5-e_{r+1}^5&\left[C_{q+1}^3\right] &=& e_q^6-e_{q+1}^6\\\hline
&\text{or}& & &\text{or}& & &\text{or}& \\\hline
\left[C_1^1\right] &=& e_1^1-e_1^4 & \left[C_1^2\right] &=& e_1^2-e_1^5 & \left[C_1^3\right] &=& e_1^3-e_1^6\\
\left[C_2^1\right] &=& e_2^1-e_1^1 & \left[C_2^2\right] &=& e_2^2-e_1^2 & \left[C_2^3\right] &=& e_2^3-e_1^3\\
&\vdots& &&\vdots & &\vdots &\\
\left[C_{p+1}^1\right] &=& e_{p+1}^1-e_{p}^1 &\left[C_{r+1}^2\right] &=& e_{r+1}^2-e_{r}^2&\left[C_{q+1}^3\right] &=& e_{q+1}^3-e_{q}^3.\\
\end{array}$$
Note that the bottom choices are only possible if there are enough $e_i^n$ for $n=1,2,3$, namely we need $p+1\leq q+2$, $r+1\leq p+2$, or $q+1\leq r+2$ if we want the bottom choice for the homology classes of $\{C_j^1\}$, $\{C_j^2\}$ or $\{C_j^3\}$ respectively, and the first four spheres represent the homology given by case A. We need $p+1\leq q+1$, $r+1\leq p+1$, or $q+1\leq r+1$ if we want the bottom choice for the homology classes of $\{C_j^1\}$, $\{C_j^2\}$ or $\{C_j^3\}$ respectively, and the first four spheres represent the homology given by case B.

As in all previous examples, the effect of choosing the first four spheres to represent the homology in case B versus case A is to rationally blow-down the central $-4$ sphere. The conditions for when one can choose the lower representation of the homology of the $j^{th}$ arm of the dual graph match up precisely with the conditions for when one can rationally blow down a linear subgraph of the $j^{th}$ arm in the original graph in case A. In case B, these conditions on when we have a second choice for the homology of the $j^{th}$ arm of the dual graph match the conditions needed to rationally blow down a linear subgraph of the $j^{th}$ arm disjointly from the central $-4$ sphere.

Additionally, the symmetries which make the rational blow-down of one arm diffeomorphic to the rational blow-down of another arm, correspond to symmetries in the $e_i^j$ which permute the values of $j$.

There is one additional way to represent the homology of the arms in the dual graphs, when the first four spheres represent homology given by case B. We know that $[C_1^j]=e_{i_1}^{m_1}-e_{i_2}^{m_2}$ where $e_{i_1}^{m_1}$ must show up with coefficient $-1$ in $[C_j]$. In case A, we cannot have $e_{i_1}^{m_1}=e_1^0$ and still have $[C_1^j]\cdot [C_{j'}]=0$ for $j\neq j'\in \{1,2,3\}$. However, in case B, this can occur but it uniquely determines the remaining homology classes as follows.
$$\begin{array} {rcl|rcl|rcl}
\left[C_1^1\right] &=& e_1^0-e_1^2 & \left[C_1^2\right] &=& e_3^0-e_1^3 & \left[C_1^3\right] &=& e_2^0-e_1^1\\
\left[C_2^1\right] &=& e_1^2-e_2^2 & \left[C_2^2\right] &=& e_1^3-e_2^3 & \left[C_2^3\right] &=& e_1^1-e_2^1\\
&\vdots& &&\vdots & &\vdots &\\
\left[C_{p+1}^1\right] &=& e_{p}^2-e_{p+1}^2 &\left[C_{r+1}^2\right] &=& e_r^3-e_{r+1}^3&\left[C_{q+1}^3\right] &=& e_q^1-e_{q+1}^1\\
\end{array}$$
This gives an embedding of the dual configuration of symplectic spheres into $\C P^2\# (p+q+r+6)\overline{\C P^2}$. Note that the dual graph has $p+q+r+7$ vertices, so a regular neighborhood $N$ of the corresponding configuration of spheres has $b_2(N)=p+q+r+7=b_2(\C P^2\# (p+q+r+6)\overline{\C P^2}$. The first and second homology of $\bdry N=Y(\mathcal{W}_{p,q,r})$ are both torsion, and $N$ and $\C P^2\# (p+q+r+6)\overline{\C P^2}$ are simply connected, so their first homologies are zero. Therefore, the Mayer-Vietoris theorem implies that the first and second homologies of the complement of $N$ in this embedding are both torsion. Therefore this is the diffeomorphism type of a rational homology ball. Since this is the unique possible rational homology ball which can strongly symplectically fill its contact boundary, it must be diffeomorphic to the smoothing of the normal surface singularity studied in \cite{StipsiczSzaboWahl}.

Therefore the number of ways to represent the homology of the spheres in the dual graph is in direct correspondence with the diffeomorphism types we can realize by starting with the original symplectic plumbing, and rationally blowing down a subgraph or the entire graph, so these are all possible convex symplectic fillings of $(Y,\xi_{pl})$.
\end{proof}

}
}

\section{Fillings of Seifert fibered spaces with more than three singular fibers}
{
\label{newexamples}
We have completed the classification of the simplest cases of symplectic fillings of Seifert fibered spaces with $k>3$ singular fibers when $e_0\leq -k-3$ in section \ref{sec:e0neg}. Here we consider the simplest cases when $k>3$, but $e_0=-k-1$ and $e_0=-k-2$. We are prevented from giving full classifications for all such examples because we need lemmas \ref{lemma:4spherehtpy} and \ref{lemma:configconnected} to say that each homology representation of the spheres in the dual graph uniquely determines a symplectic filling. These lemmas do not apply when $e_0=-k-1$ or $-k-2$ except when $k\in\{3,4,5\}$. However, we can still get a hold on what happens when $k=4,5$, and the resulting fillings provide new symplectic operations as opposed to the previous examples.

We look at the case where $e_0=-k-1$ with $k=4$ or $5$ arms, with coefficients as in figure \ref{fig:GraphExampleskarms}. The boundary of the corresponding plumbing is $Y(-k-1;\frac{-n_1}{n_1-1},\cdots, \frac{-n_k}{n_k-1})$. The dual graph is in figure \ref{fig:DualGraphExampleskarms}.

\begin{figure}[H]
	\centering
	\subfloat[Graphs]{
	\label{fig:GraphExampleskarms}
	\psfragscanon
	\psfrag{A}{$n_1-1$}
	\psfrag{B}{$n_2-1$}
	\psfrag{C}{$n_3-1$}
	\psfrag{D}{$n_k-1$}
	\psfrag{E}{$-k-1$}
	\psfrag{F}{$-2$}
	\includegraphics[width=4.5cm]{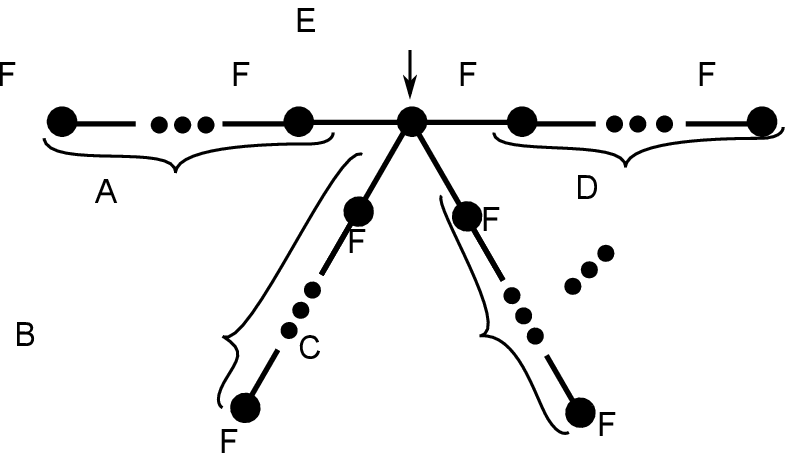}
	\psfragscanoff
	}
	\;\;\;\;\;\;\;\;\;\subfloat[Dual graphs]{
	\label{fig:DualGraphExampleskarms}
	\psfragscanon
	\psfrag{A}{$-n_1$}
	\psfrag{B}{$-n_2$}
	\psfrag{C}{$-n_3$}
	\psfrag{D}{$-n_k$}
	\psfrag{E}{$1$}
	\includegraphics[width=3.5cm]{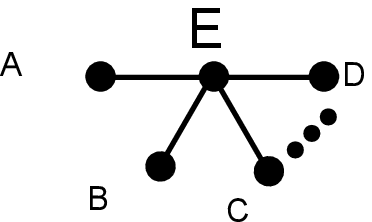}
	\psfragscanoff
	}
\caption{The dually positive graphs for which the boundary of the corresponding plumbing is $Y(-k-1;\frac{-n_1}{n_1-1},\cdots, \frac{-n_k}{n_k-1})$ and their dual graphs.}
\label{fig:karms}
\end{figure}

By gluing an arbitrary convex filling to the concave neighborhood of the dual graph, we obtain an embedding of $C_0\cup\cdots\cup C_k$ into $\C P^2\#M\overline{\C P^2}$. By lemma \ref{lem:homologyint}, the homology of these spheres in terms of the standard basis for $H_2(\C P^2\#M\overline{\C P^2};\Z)$ has the following form:
\begin{eqnarray*}
[C_0]&=& \ell\\
\left[C_1\right] &=& \ell-e_{i_1^1}-\cdots -e_{i_{n_1+1}^1}\\
&\vdots &\\
\left[C_k\right] & =& \ell -e_{i_1^k}-\cdots -e_{i_{n_k+1}^k}\\
\end{eqnarray*}
and $|\{i_1^j,\cdots , i_{n_j+1}^j\}\cap \{i_1^{j'},\cdots , i_{n_{j'}+1}^{j'}\}|=1$ for all $j\neq j' \in \{1,\cdots , k\}$. When $k=4$ this can occur in $3$ different ways, and when $k=5$ this can occur in $5$ different ways (up to symmetries). After blowing down the corresponding embeddings into $\C P^2\# M\overline{\C P^2}$, these dual graphs descend to the configurations $\mathcal{I}_1^4$, $\mathcal{I}_2^4$, and $\mathcal{I}_4^4$ when $k=4$, and the configurations $\mathcal{I}_1^5$, $\mathcal{I}_2^5$, $\mathcal{I}_5^5$, $\mathcal{I}_6^5$, and $\mathcal{I}_7^5$ when $k=5$ (see figure \ref{fig:configurations}). The spheres which share a common intersection point after blowing down, will share a common $e_i$ with coefficient $-1$ in their homology representations in the blow-up.

Note that if the $n_j$ are not sufficiently large, it may not be possible to realize all possible homology configurations. It suffices to assume $n_j\geq k-2$ for all $j\in \{1,\cdots , k\}$ to obtain all possibilities. It will be clear which homology representations and thus diffeomorphism types of fillings cannot be realized when the $n_j$ are smaller.

We first produce diagrams which depict the embeddings of the dual graph into $\C P^2 \# M \overline{\C P^2}$ that correspond to these different representations of the homology, and then use them to generate diagrams of the complement of these embeddings. Then we show that these diffeomorphism types have the structure of an appropriate Lefschetz fibration to prove that these diffeomorphism types are actually realized as convex symplectic fillings.

The result of blowing up according to one of the possible homology representations gives an embedding represented by one of the choices in figures \ref{fig:embedding4arms} and \ref{fig:embedding5arms}.  The attaching circles of the $k+1$ spheres in the dual configuration which are proper transforms of the original $k+1$ complex projective lines are shown in red, and the attaching circles for the 2-handles corresponding to the exceptional spheres are shown in black. 

\begin{figure}
\centering
\subfloat[Embeddings of dual graph]{
\psfragscanon
\psfrag{T}{4 $3$-h, 1 $4$-h}
\psfrag{A}{$-n_1$}
\psfrag{B}{$-n_2$}
\psfrag{C}{$-n_3$}
\psfrag{D}{$-n_4$}
\psfrag{E}{$+1$}
\includegraphics[scale=.4]{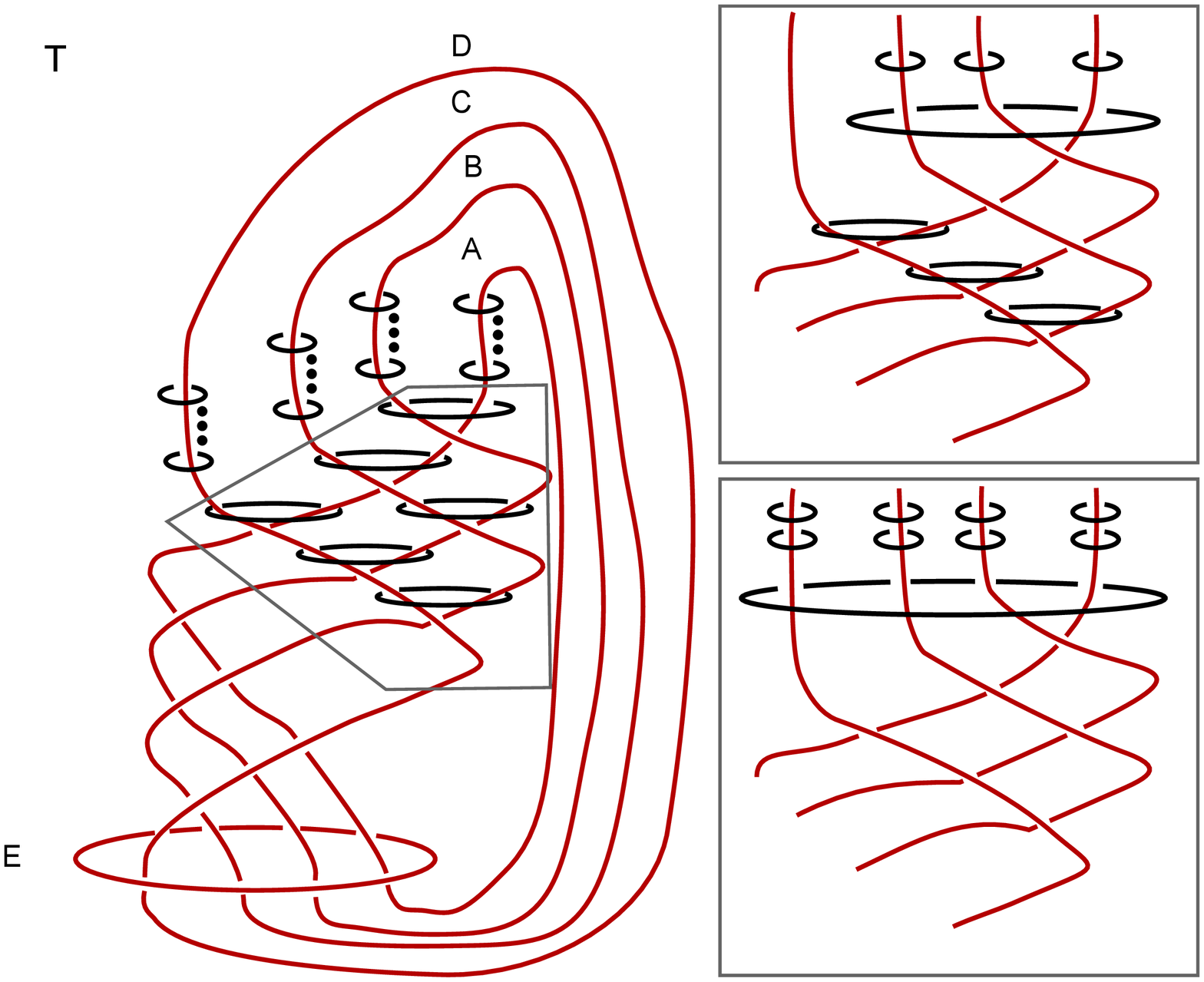}
\psfragscanoff
\label{fig:embedding4arms}
}
\;\;\;\subfloat[Complement of dual graph]{
\includegraphics[scale=.4]{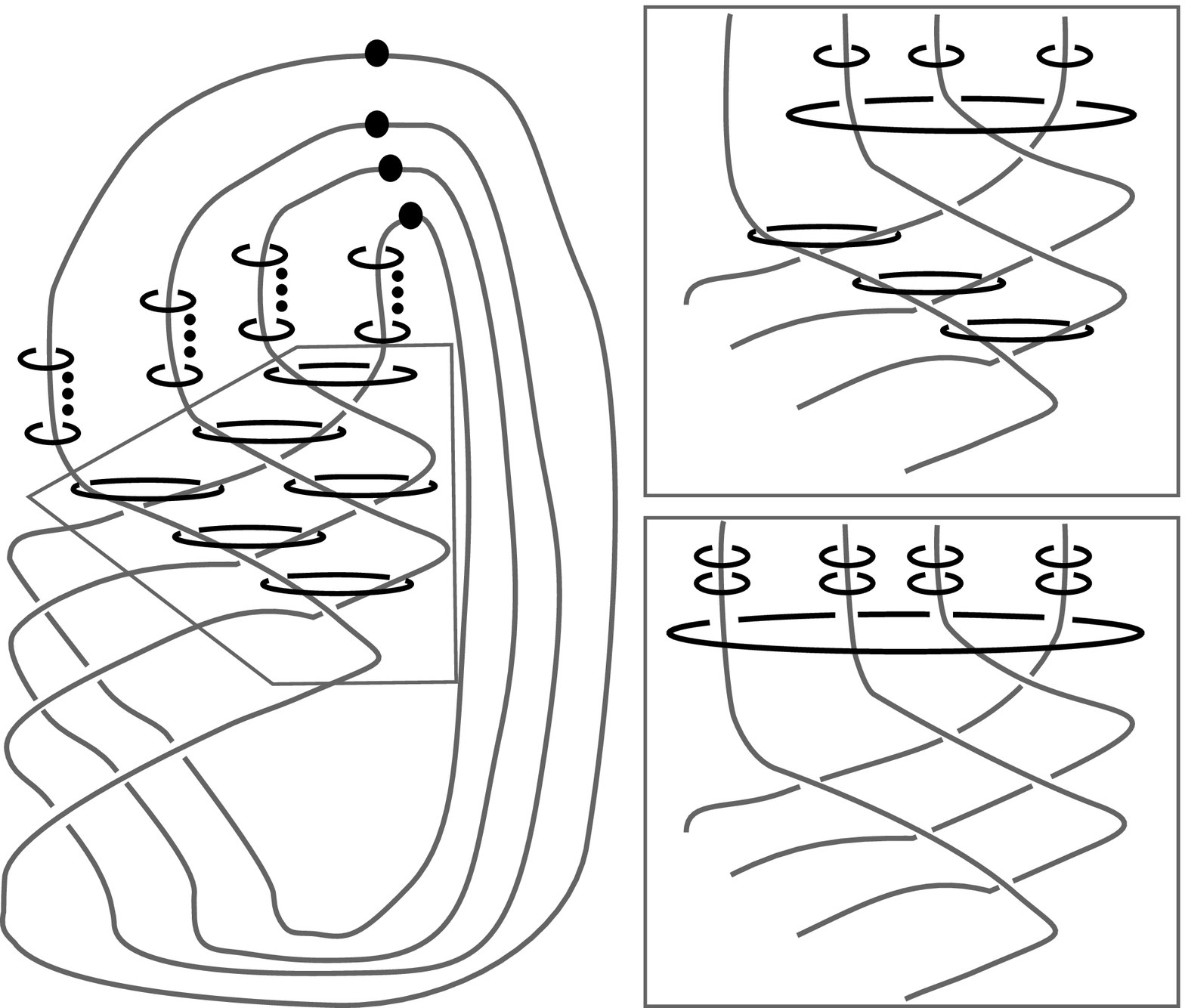}
\label{fig:Filling4arms}
}
\caption{Embeddings of the dual graph into $\C P^2\# M\overline{\C P^2}$ when $k=4$, and the complement of the embedding. Alternate embeddings (and their complements) can be obtained by replacing the boxed portion with one of the adjacent boxes. All framings on the black attaching circles are $-1$.}
\label{fig:EmbeddingFilling4arms}
\end{figure}

\begin{figure}
\psfragscanon
\psfrag{A}{$-n_1$}
\psfrag{B}{$-n_2$}
\psfrag{C}{$-n_3$}
\psfrag{D}{$-n_4$}
\psfrag{E}{$-n_5$}
\psfrag{F}{$+1$}
\psfrag{T}{5 $3$-h, 1 $4$-h}
\includegraphics[scale=.4]{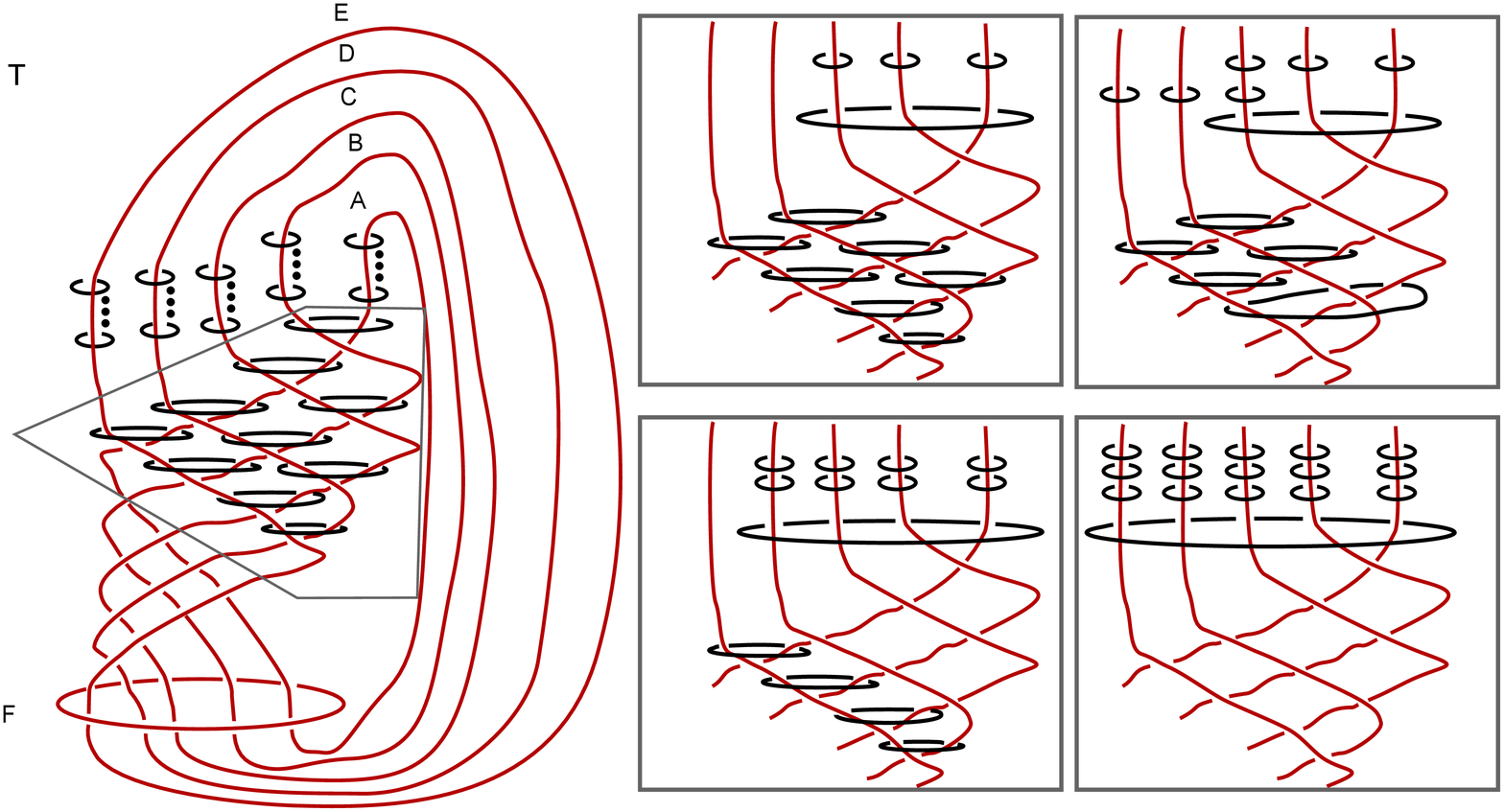}
\psfragscanoff
\caption{The embeddings of the dual graph when $k=5$. Alternate embeddings can be obtained by replacing the boxed portion with one of the adjacent boxes. All framings of the black attaching circles are $-1$.}
\label{fig:embedding5arms}
\end{figure}

We obtain the diffeomorphism type of the potentially convex symplectic filling by cutting out the dual configuration, and turning the manifold upside-down. Cutting out the dual configuration corresponds to deleting the $0$-handle, and the $2$-handles corresponding to the spheres in the dual graph (those whose attaching circles are shown in red). To turn the resulting manifold upside-down, we take the mirror image of the original link formed by all attaching circles (which we can do by reversing all the crossings), change all the framing coefficients to surgery coefficients with the opposite sign, attach 2-handles along $0$-framed meridians of the black surgery circles (since we cut out the other 2-handles with the dual graph), and replace the $k$ 3-handles by $k$ 1-handles. After a diffeomorphism of the boundary (performing all the blow-downs along the surgery curves with coefficients $\pm 1$), all of the crossings are reversed back to their original orientation (the lower half of the crossings are reversed when the surgery curve coming from the red $+1$ framed curve is blown down, and the upper half of the crossings are reversed when performing the blow-downs along the surgery curves corresponding to the exceptional spheres that intersect more than one sphere in the dual configuration), and the only 2-handles are $-1$ framed in the same position as in the embedding. Representing the $k$ $1$-handles by dotted circles we obtain diagrams like that in figure \ref{fig:Filling4arms}. Rotating the plane we are projecting the link onto by $90^{\circ}$ about the vertical axis, we obtain diagrams for the possible fillings given by figures \ref{fig:RotatedFilling4arms} and \ref{fig:RotatedFilling5arms}. 

\begin{figure}
\centering
\subfloat[$k=4$]{
\includegraphics[scale=.4]{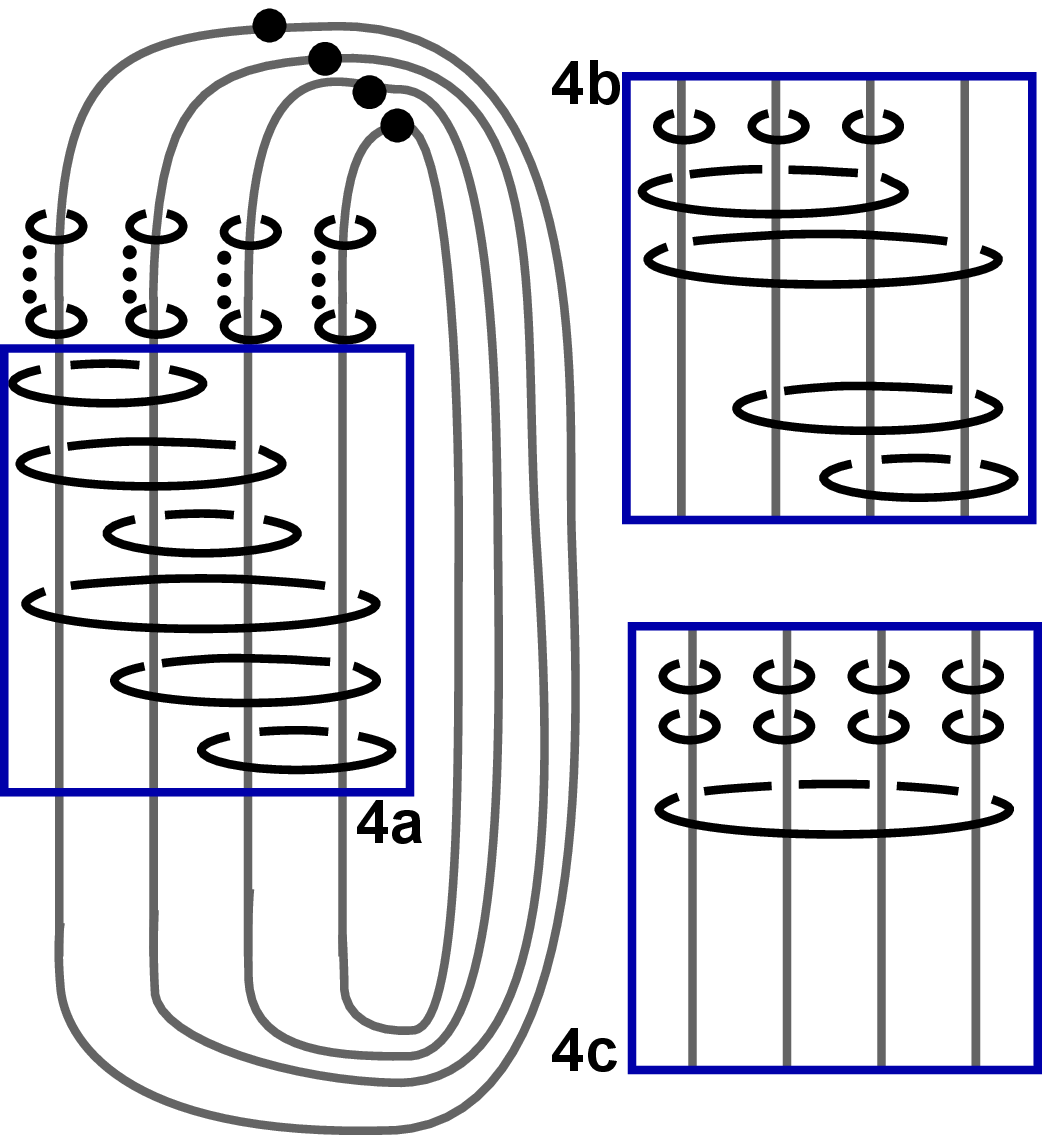}
\label{fig:RotatedFilling4arms}
}
\;\;\;\;\;\;\;\subfloat[$k=5$]{
\includegraphics[scale=.4]{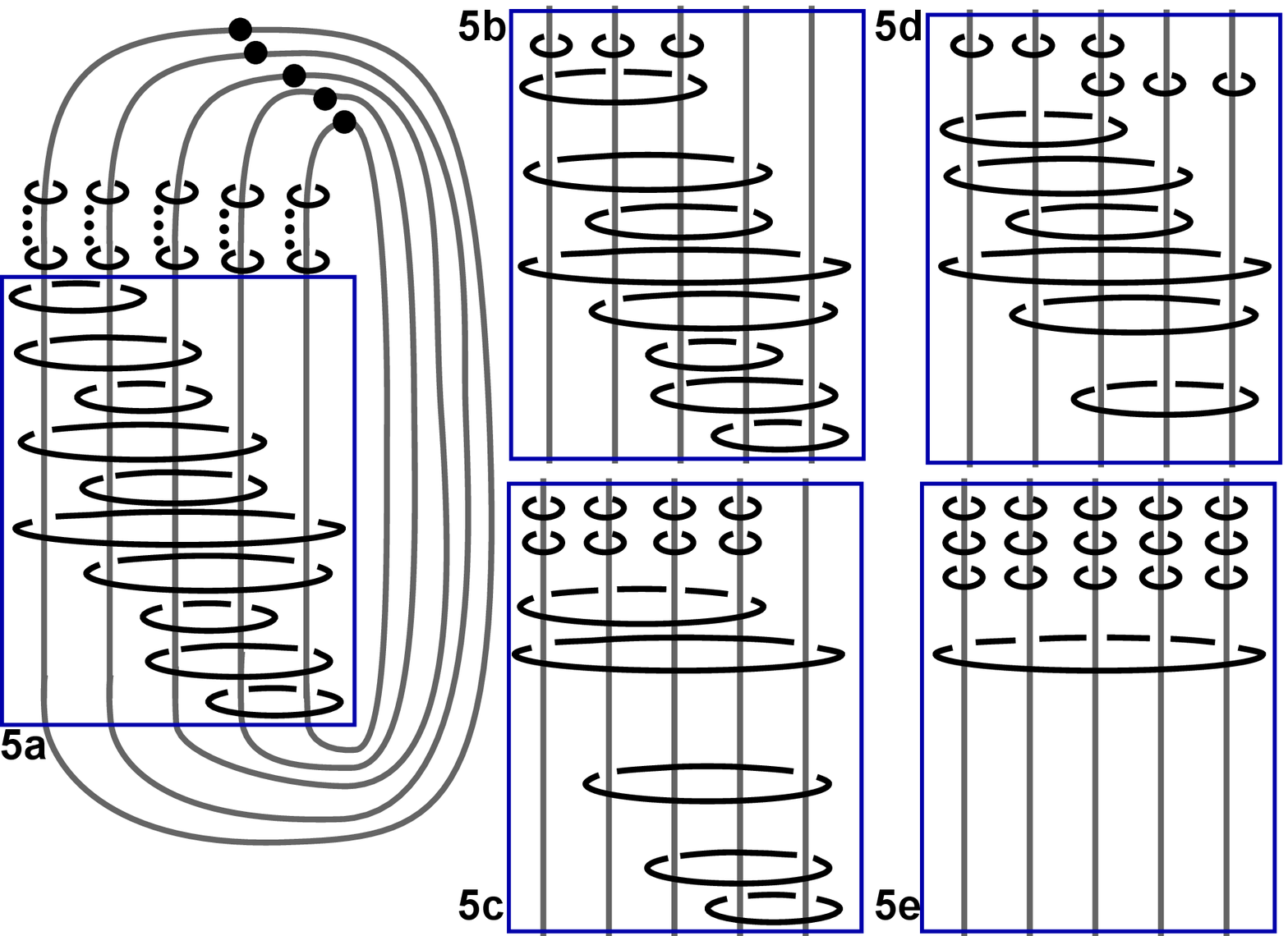}
\label{fig:RotatedFilling5arms}
}
\caption{The rotated diagrams giving all possible diffeomorphism types of these symplectic fillings. All 2-handles (shown in black) have framing coefficient $-1$.}
\label{fig:RotatedFillingkarms}
\end{figure}

As in our simplest examples in section \ref{simplestexamples}, these diagrams represent Lefschetz fibrations, now over $D^2$ with fibers $k$-holed disks. The vanishing cycles are given by the $-1$-framed $2$-handles (note that the Seifert framing agrees with the page framing in this diagram so this is indeed a positive Lefschetz fibration). Now we need to check that the open book decomposition induced on the boundary of the Lefschetz fibration supports the contact structure $\xi_{pl}$. 

\begin{prop} The open book decompositions arising as the boundary of these Lefschetz fibrations supports $\xi_{pl}$.\end{prop}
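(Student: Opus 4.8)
The plan is to follow the model of the $k=3$ computation in section \ref{simplestexamples}, reducing the statement to a comparison of two planar open books via relations in the mapping class group of the page. First I would read off the open book supported on the boundary of each Lefschetz fibration in figures \ref{fig:RotatedFilling4arms} and \ref{fig:RotatedFilling5arms}: its page $\Sigma$ is a disk with $k$ holes and its monodromy is the product of right-handed Dehn twists about the drawn vanishing cycles, taken in the order in which the $-1$-framed $2$-handles appear. Call this mapping class $W_{\mathcal I}$, one for each admissible intersection configuration $\mathcal I$ ($\mathcal I_1^4,\mathcal I_2^4,\mathcal I_4^4$ when $k=4$ and $\mathcal I_1^5,\mathcal I_2^5,\mathcal I_5^5,\mathcal I_6^5,\mathcal I_7^5$ when $k=5$). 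In parallel I would write down the open book that Theorem \ref{thmplumbingconvex} attaches to the defining plumbing of figure \ref{fig:karms}: here $s_0=k-(k+1)=-1$, the interior arm vertices have $s_j=0$, and the end vertices have $s_j=-1$, so the Gay--Mark page is again the $k$-holed disk and the supporting monodromy $W_{pl}$ is the product of positive Dehn twists about the curves around the connect-sum necks, exactly as in figure \ref{fig:page4}. Both factorizations are thus words in positive Dehn twists on the same planar surface.

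The key observation is that an open book is determined by its page together with its monodromy as an \emph{element} of the mapping class group, independently of how that element is factored; hence it suffices to prove the equality of mapping classes $W_{\mathcal I}=W_{pl}$ for each configuration $\mathcal I$. I would prove these equalities by a sequence of substitutions using the lantern relation $T_{\partial_1}T_{\partial_2}T_{\partial_3}T_{\partial_4}=T_{\alpha}T_{\beta}T_{\gamma}$ on four-holed subspheres of $\Sigma$, together with its generalization (the daisy relation) on a $(j{+}1)$-holed subsphere when a single exceptional sphere meets $j>3$ of the lines $C_0,\dots,C_k$ at a common point. Each such substitution preserves the underlying mapping class, so it changes the Lefschetz fibration (the filling $4$-manifold) while fixing the boundary open book --- this is precisely the mechanism behind the rational blow-downs appearing in the earlier examples, and for $k=3$ it was the single lantern relation carrying figure \ref{fig:3BOpenBook} to figure \ref{fig:3BOpenBook2}. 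Concretely, the configuration $\mathcal I_1^k$, in which $C_1,\dots,C_k$ all pass through one point, corresponds to the original plumbing and so already supports $\xi_{pl}$ by Theorem \ref{thmplumbingconvex}; for every other configuration the places where the spheres share a common exceptional curve (as recorded by the coincidences $i^j_h=i^{j'}_{h'}$ constrained in Lemma \ref{lem:homologyint}) dictate exactly which (generalized) lantern substitutions convert $W_{\mathcal I}$ back into $W_{pl}$.

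The main obstacle will be the combinatorial bookkeeping in this last step: I must exhibit, for each of the configurations of figure \ref{fig:configurations} that occurs, an embedded four-holed (or $(j{+}1)$-holed) subsurface of the $k$-holed disk on which the relevant boundary-parallel and interior curves of the two factorizations match up to isotopy, and then check that the prescribed sequence of lantern/daisy moves transforms one positive factorization into the other. This is routine but delicate for $k=5$, where several distinct configurations must be handled and a single move no longer suffices; one must also verify that the Dehn twists being regrouped are supported in disjoint or nested subsurfaces, so that the substitutions can be performed in the correct order without disturbing the remaining twists. Once these identities of mapping classes are established, each boundary open book coincides with the Gay--Mark open book and therefore supports $\xi_{pl}$, which (combined with the allowability of the Lefschetz fibrations established above) shows that all the listed diffeomorphism types are genuine strong, indeed Stein, symplectic fillings of $(Y,\xi_{pl})$.
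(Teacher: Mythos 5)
Your proposal is correct and follows essentially the same route as the paper: both read off the boundary open books of the Lefschetz fibrations as positive factorizations on the $k$-holed disk, identify the Gay--Mark monodromy for the plumbing (which coincides with the factorization coming from the configuration $\mathcal{I}_1^k$, i.e.\ the fillings $4c$ and $5e$), and then equate the remaining factorizations with it as mapping classes via the lantern relation and its generalization --- your ``daisy relation'' is precisely the paper's Lemma \ref{lanternmove}, which it proves by iterating $m-1$ lantern relations. The bookkeeping you flag as the main obstacle is carried out in the paper by writing the explicit monodromy words $M_{4a},\dots,M_{5e}$ in the notation $D_{h_1,\dots,h_j}$ and applying the lemma with suitable choices of the blocks $B_0,\dots,B_m$.
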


\begin{proof}
The open book decompositions on the boundaries of these Lefschetz fibrations have pages which are disks with $k$ holes, and the monodromy is given by positive Dehn twists around the vanishing cycles, ordered from the lowermost vanishing cycle in the figure to the uppermost. If we place the holes on the page on the vertices of a regular $k$-gon contained in the disk, we can isotope the attaching circles so that its projection onto a page has convex interior and contains some non-empty subset of the $k$-holes. We will denote a Dehn twist about a curve on this standard $k$-holed disk that convexly contains holes $\{h_1,\cdots , h_j\}$ by $D_{h_1,\cdots, h_j}$. We will label the holes corresponding to the dotted circles $1,\cdots, k$ from the left-most dotted circle to the right-most. Then the monodromy $M_{k\cdot }$ of the open books on the boundary of the above given Lefschetz fibrations in figure \ref{fig:RotatedFillingkarms} is given as follows.
\begin{eqnarray*}
M_{4a}&=& D_{3,4}D_{2,4}D_{1,4}D_{2,3}D_{1,3}D_{1,2}D_1^{n_1-2}D_2^{n_2-2}D_3^{n_3-2}D_4^{n_4-2}\\
M_{4b}&=& D_{3,4}D_{2,4}D_{1,4}D_{1,2,3}D_1^{n_1-1}D_2^{n_2-1}D_3^{n_3-1}D_4^{n_4-2}\\
M_{4c}&=& D_{1,2,3,4}D_1^{n_1}D_2^{n_2}D_3^{n_3}D_4^{n_4}\\
M_{5a}&=& D_{4,5}D_{3,5}D_{3,4}D_{2,5}D_{1,5}D_{2,4}D_{1,4}D_{2,3}D_{1,3}D_{1,2}D_1^{n_1-3}D_2^{n_2-3}D_3^{n_3-3}D_4^{n_4-3}D_5^{n_5-3}\\
M_{5b}&=&D_{4,5}D_{3,5}D_{3,4}D_{2,5}D_{1,5}D_{2,4}D_{1,4}D_{1,2,3}D_1^{n_1-2}D_2^{n_2-2}D_3^{n_3-2}D_4^{n_4-3}D_5^{n_5-3}\\
M_{5c}&=& D_{4,5}D_{3,5}D_{2,5}D_{1,5}D_{1,2,3,4}D_1^{n_1-1}D_2^{n_2-1}D_3^{n_3-1}D_4^{n_4-1}D_5^{n_5-3}\\
M_{5d}&=& D_{3,4,5}D_{2,5}D_{1,5}D_{2,4}D_{1,4}D_{1,2,3}D_1^{n_1-2}D_2^{n_2-2}D_3^{n_3-1}D_4^{n_4-2}D_5^{n_5-2}\\
M_{5e}&=& D_{1,2,3,4,5}D_1^{n_1}D_2^{n_2}D_3^{n_3}D_4^{n_4}D_5^{n_5}
\end{eqnarray*}

Theorem \ref{thmplumbingconvex} tells us that $\xi_{pl}$ is supported by an open book decomposition whose pages are $k$-holed disks, and whose monodromy consists of boundary parallel curves, $n_j$ of them about each hole, and one parallel to the outer boundary. Note that this is identical to the monodromy given by $M_{4c}$ and $M_{5e}$. We will show that the other elements are just other factorizations of the same monodromy.

Elements in the mapping class group of the disk with $k$ holes placed on the vertices of a regular $k$-gon inside the disk satisfy the lantern relations. These relations say that if $A,B,$ and $C$ are disjoint collections of holes then $D_AD_BD_CD_{A\cup B\cup C}=D_{A\cup B}D_{B\cup C}D_{A\cup C}$ where $A,B,C$ are subsets of the holes $\{1,\cdots, k\}$ (see \cite{MargalitMcCammond}). We first prove a lemma which gives us more general moves obtained from a sequence lantern relations. 

\begin{lemma}Suppose $B_0,B_1,\cdots , B_m$  are disjoint subsets of the $k$ holes on the disk ($m\geq 2$. Then 
$$D_{B_0}^{m-1}D_{B_1}\cdots D_{B_m} = D_{B_0\cup B_1}D_{B_0\cup B_2}\cdots D_{B_0\cup B_m}D_{B_1\cup \cdots \cup B_m}D_{B_0\cup B_1\cup \cdots \cup B_m}^{-1}.$$  \label{lanternmove}\end{lemma}
\begin{proof}
This relation can be obtained by performing $m-1$ lantern relations, and using the fact that Dehn twists around disjoint curves commute. In the notation above, for the $j^{th}$ lantern relation in this sequence, let $A=\{B_1,\cdots , B_j\}$, $B=B_0$, and $C=B_{j+1}$.
\end{proof}

Now we can use these moves to show that $M_{4a}=M_{4b}=M_{4c}$ and $M_{5a}=M_{5b}=M_{5c}=M_{5d}=M_{5e}$. A simple lantern relation shows that $M_{4a}=M_{4b}$. An application of the lemma with $B_0=4$, $B_1=3$, $B_2=2$, $B_3=1$ shows that $M_{4b}=M_{4c}$.

One lantern relation gives that $M_{5a}=M_{5b}$, and two lantern relations gives that $M_{5a}=M_{5d}$. We can commute Dehn twists about disjoint curves so we can commute $D_{3,4}$ past $D_{2,5}$ and $D_{1,5}$. Therefore,
$$M_{5b}=D_{4,5}D_{3,5}D_{2,5}D_{1,5}D_{3,4}D_{2,4}D_{1,4}D_{1,2,3}D_1^{n_1-2}D_2^{n_2-2}D_3^{n_3-2}D_4^{n_4-3}D_5^{n_5-3}$$
Then an application of the lemma with $B_0=4$, $B_1=3$, $B_2=2$, $B_3=1$ gives $M_{5b}=M_{5c}$. A final application of the lemma with $B_0=5$, $B_1=4$, $B_2=3$, $B_3=2$, $B_4=1$ implies $M_{5c}=M_{5d}$
\end{proof}

As in the examples in section \ref{simplestexamples}, we can stabilize this Lefschetz fibration to an allowable Lefschetz fibration, so by \cite{AkbulutOzbagci1},\cite{LoiPiergallini}, and \cite{Plamenevskaya} this manifold supports a Stein structure inducing a contact structure on its boundary that is supported by the boundary open book decomposition, which is $\xi_{pl}$ by the previous proposition.

Note that the diffeomorphism types of the fillings $4a$, $4b$, and $4c$ can be distinguished by Euler characteristic:
\begin{eqnarray*} \chi(4a)&=&(n_1-2+n_2-2+n_3-2+n_4-2+6)-(4)+(1)=n_1+n_2+n_3+n_4-5\\
\chi(4b)&=&n_1+n_2+n_3+n_4-4\\
\chi(4c)&=&n_1+n_2+n_3+n_4-2\\
\end{eqnarray*}
Similarly, we can distinguish $5a$, $5b$, $5c$, $5d$, and $5e$:
\begin{eqnarray*} \chi(5a)&=&n_1+n_2+n_3+n_4+n_5-9\\
\chi(5b)&=&n_1+n_2+n_3+n_4+n_5-8\\
\chi(5c)&=&n_1+n_2+n_3+n_4+n_5-6\\
\chi(5d)&=&n_1+n_2+n_3+n_4+n_5-7\\
\chi(5e)&=&n_1+n_2+n_3+n_4+n_5-3\\
\end{eqnarray*}

Note that the homology representation yielding the possible diffeomorphism type given by $4a$ can only occur when $n_1,n_2,n_3,n_4\geq 2$, $4b$ requires $n_4\geq 2$. In general since we are classifying fillings whose arms have length $n_j-1$, we have implicitly assumed $n_j\geq 1$. The restrictions for $5a$ are that $n_j\geq 3$ $\forall j$; for $5b$ we need $n_1,n_2,n_3\geq 2$ and $n_4,n_5\geq 3$; for we need $5c$ $n_5\geq 3$; for $5d$ we need $n_1,n_2,n_4,n_5\geq 2$.

This completes the classification of symplectic fillings of $(Y(-k-1; \frac{n_1}{n_1-1}, \cdots , \frac{n_k}{n_k-1}), \xi_{pl})$ when $k=4,5$, thus proving theorem \ref{thm:newfillings}. Note that $4c$ and $5e$ are diagrams for the original plumbing after handle-slides and cancellations. $4b$ and $5c$ are obtained from these plumbings by performing a rational blow-down of the central vertex (a sphere of square $-k-1$) together with $k-4$ spheres of square $-2$ in one of the arms. However, $4a$, $5a$, $5b$, and $5d$ are new diffeomorphism types. Note that the restrictions on the $n_j$ for these diffeomorphism types imply that their Euler characteristics are always strictly greater than $1$ so none of these are rational homology balls. Furthermore, these cannot be obtained by a rational blow-down of any subgraph containing the central vertex because by varying the $n_j$, we have classified all fillings of subgraphs containing the central vertex, so any such rational blow-down would give back one of our original diffeomorphism types ($4a,4b,4c,5a,5b,5c,5d,5e$). Each of these diffeomorphism types is distinguished from the others by Euler characteristic so there cannot be any non-trivial rational blow-downs of a subgraph containing the central vertex. Any subgraph not containing the central vertex is a union of linear subgraphs of spheres of square $-2$, which are known to have no rational blow-downs. Therefore these diffeomorphism types cannot be obtained by any rational blow-down of a subgraph of the original plumbing of spheres. Note, however that their Euler characteristics are still strictly small than that of the original plumbing.

}

\section{Further Observations and Questions}
{
Note that in many of the examples we have considered, each possible diffeomorphism type of a strong symplectic filling was actually realized by a Stein filling. This is not surprising, because each of the contact manifolds we consider are supported by planar open book decompositions (theorem \ref{thmplumbingconvex}), and a result of Wendl in \cite{Wendl} says that any minimal strong symplectic filling of a contact structure supported by a planar open book can be deformed through a homotopy of strong symplectic fillings to a Stein filling. Thus any diffeomorphism type supporting a convex symplectic structure also supports a Stein structure.

In all the examples we have considered, we have been able to prove that each possible diffeomorphism type allowed by the homological restrictions of section \ref{mainargument}, has a symplectic structure with convex boundary by direct construction. While it is valuable to understand explicit symplectic manifolds of each diffeomorphism type which can convexly fill a given contact manifold, we can only provide these constructions on a case by case basis. It would be useful to know abstractly that every diffeomorphism type produced by the arguments in section \ref{mainargument} are actually realized as strong symplectic fillings. One way to show this, would be to prove that any symplectic embedding of the dual configurations of spheres has a concave neighborhood. Since the blow-up construction gives a symplectic embedding of the dual configuration into a blow-up of $\C P^2$, if we could ensure this dual configuration has a concave neighborhood, the complement would necessarily have a symplectic structure with convex boundary. 

Additional complications in the opposite direction arise in the cases when $k$ is large and $e_0=-k-2,-k-1$, since we do not understand the possible embeddings of the dual graph into a blow up of $\C P^2$. While the space of complex projective lines in a complicated intersection configuration may be disconnected or empty, it is not clear what the space of smoothly embedded spheres in this intersection configuration looks like. It would be interesting to understand what information is generally implied about the classification of minimal strong symplectic fillings by the representations of the homology classes of the spheres in the dual graph into the second homology of a blow-up of $\C P^2$.

\begin{question}
Are there dually positive Seifert fibered spaces for which the number of diffeomorphism types of minimal strong symplectic fillings is strictly less than or strictly greater than the number of homology representations of the dual graph into the homology of a blow-up of $\C P^2$?
\end{question}

One phenomenon that is apparent from all the examples we have computed is that the original plumbing of spheres has the highest Euler characteristic of all the symplectic fillings of its boundary. Thus replacing these plumbings with alternate fillings could be useful in searching for symplectic manifolds with small Euler characteristic. One may ask if this phenomenon is true in greater generality.

\begin{question} Does a (dually positive or more general) symplectic plumbing of spheres have the highest Euler characteristic of any symplectic filling of its contact boundary? \end{question}

Finally, we have shown in theorem \ref{thm:finite} that dually positive Seifert fibered spaces with the contact structure $\xi_{pl}$ have finitely many strong symplectic fillings. Similar boundaries of other symplectic plumbings have been shown to admit infinitely many symplectic fillings (see in particular the work of Akhmedov and Ozbagci \cite{AkhmedovOzbagci}). However, these plumbings involve curves of higher genus at the central vertex. While those examples and the ones considered here provide some information about when symplectic plumbings can be replaced by only finitely many versus infinitely many symplectic fillings, there is still plenty of unknown ground. Since every negative definite graph (with each vertex decorated by the genus and self-intersection of the corresponding curve) corresponds to a symplectic plumbing of spheres with convex boundary inducing a contact structure $\xi_{pl}$ (by \cite{GayStipsicz2}), these contact boundaries are natural manifolds to start with.

\begin{question} Which manifolds, $(Y,\xi_{pl})$, arising as the boundary of a negative definite symplectic plumbing have only finitely many strong symplectic fillings?\end{question}

}

\bibliographystyle{plain}
\bibliography{bibliography}

\end{document}